\newcommand{\C}{\mathbb{C}}
\newcommand{\R}{\mathbb{R}}
\newcommand{\de}{\partial}
\DeclareMathOperator{\curl}{curl}
\DeclareMathOperator{\divv}{div}
\newtheorem{thm}{Theorem}[section]
\newtheorem{cor}[thm]{Corollary}
\newtheorem{claim}[thm]{Claim}
\newtheorem{lem}[thm]{Lemma}
\newtheorem{prop}[thm]{Proposition}
\newtheorem{defi}[thm]{Definition}
\newtheorem{rmk}[thm]{Remark}
\newtheorem{assumption}[thm]{Assumption}
\newtheorem{propos}{Proposition}[section]
\DeclareRobustCommand\widecheck[1]{{\mathpalette\@widecheck{#1}}}
\def\@widecheck#1#2{%
    \setbox\z@\hbox{\m@th$#1#2$}%
    \setbox\tw@\hbox{\m@th$#1%
       \widehat{%
          \vrule\@width\z@\@height\ht\z@
          \vrule\@height\z@\@width\wd\z@}$}%
    \dp\tw@-\ht\z@
    \@tempdima\ht\z@ \advance\@tempdima2\ht\tw@ \divide\@tempdima\thr@@
    \setbox\tw@\hbox{%
       \raise\@tempdima\hbox{\scalebox{1}[-1]{\lower\@tempdima\box
\tw@}}}%
    {\ooalign{\box\tw@ \cr \box\z@}}}
\numberwithin{equation}{section}
\begin{document}

\title{\sc Operator-norm homogenisation estimates for the system of Maxwell equations on periodic singular structures}

\author[1]{Kirill Cherednichenko}
\author[1]{Serena D'Onofrio}
\affil[1]{Department of Mathematical Sciences, University of Bath, Claverton Down, Bath, BA2 7AY, United Kingdom}




\maketitle

\begin{abstract}
For arbitrarily small values of $\varepsilon>0,$ we formulate and analyse the Maxwell system of equations of electromagnetism on $\varepsilon$-periodic sets
$S^\varepsilon\subset{\mathbb R}^3.$ Assuming that a family of Borel measures $\mu^\varepsilon,$ such that 
${\rm supp}(\mu^\varepsilon)=S^\varepsilon,$ is obtained by $\varepsilon$-contraction of a fixed 1-periodic measure $\mu,$ and for right-hand sides $f^\varepsilon\in L^2(\R^3, d\mu^\varepsilon),$ we prove order-sharp norm-resolvent convergence estimates for the solutions of the system.
Our analysis includes the case of periodic ``singular structures", when $\mu$ is supported 
by lower-dimensional manifolds. The estimates are obtained by combining several new tools we develop for analysing the Floquet decomposition of an elliptic differential operator on functions from Sobolev spaces with respect to a periodic Borel measure.  These tools include a generalisation of the classical Helmholtz decomposition for $L^2$ functions, an associated Poincar\'{e}-type inequality, uniform with respect to the parameter of the Floquet decomposition, and an appropriate asymptotic expansion inspired by the classical power series. Our technique does not involve any spectral analysis and does not rely on the existing approaches, such as Bloch wave homogenisation or the spectral germ method.

\vskip 0.3cm

{\bf Keywords} Homogenisation $\cdot$ Norm-resolvent estimates $\cdot$ Periodic measures $\cdot$ Singular structures $\cdot$ Helmholtz decomposition

\end{abstract}

\section{Introduction}

The operator-theoretic perspective on partial differential equations (PDE) with multiple scales has proved effective for obtaining sharp convergence results for problems of periodic homogenisation, see {\it e.g.} \cite{Sevostianova, Zhi89, BS04, BirmanSuslina_corrector, ZhikovPastukhova, CC16, Suslina_dyrki} for related developments in the ``whole-space'' setting, 
{\it i.e.} when the spatial domain is periodic is invariant with respect to shifts by the elements of a periodic lattice in ${\mathbb R}^d,$ $d\ge 2.$

The techniques developed in the above works have highlighted a variety of different new ways to interpret the process homogenisation, {\it e.g.} via the singular-value decomposition of operator resolvents or by extending the classical perturbation series to PDE families that involve an additional length-scale parameter. However, a common strand in all of them is the idea that homogenisation corresponds is a ``long-wave" asymptotic regime \cite[Chapter 4]{BLP}, governed by the behaviour of the related differential operators near the bottom of its spectrum. It seems natural to enquire whether this rationale can be extended to arbitrary periodic (Borel) measures, providing useful order-sharp approximations for periodic ``structures".  

In our earlier work \cite{CD18} we addressed the above question for the case of a scalar elliptic problem
\begin{equation}
-\nabla\cdot A(\cdot/\varepsilon)\nabla u+u=f,\qquad f\in L^2({\mathbb R}^d,d\mu^\varepsilon),\qquad \varepsilon>0,
\label{whole_space_eq1}
\end{equation}
where the $\varepsilon$-periodic measure $\mu^\varepsilon$ is obtained by $\varepsilon$-scaling from a fixed periodic measure in ${\mathbb R}^d,$
and the matrix-function $A$ is uniformly positive definite. As a starting point of our approach,
we considered the PDE family obtained from (\ref{whole_space_eq1}) by the Floquet transform (see \cite{CC16}, \cite{ZP16}), in some sense replacing the macroscopic variable by an additional parameter $\theta$ (``quasimomentum"), akin to the Fourier dual variable for PDE with constant coefficients. The strategy for the analysis of the family thus obtained was to use an asymptotic approximation for the solution in powers of $\varepsilon,$ carefully analyse the homogenisation corrector as a function of $\varepsilon$ and $\theta,$ and obtain an estimate for the remainder that is uniform with respect to $\theta.$ The key technical tool for the proof of remainder estimates was a Poincar\'{e}-type inequality in an appropriate Sobolev space of quasiperiodic functions, conditioned by the fact that we deal with an arbitrary measure. Equipped with this new machinery, in the present paper we set out to tackle a vector problem, in particular the system of Maxwell equations, which is of interest in applications to electromagnetism. In the case when $\mu^\varepsilon$ is the Lebesgue measure, operator-norm estimates for the Maxwell system have been obtained, using the spectral approach, in \cite{BS04} (for the ``non-magnetic" case with no currents, as an application of the spectral germ technique introduced in the paper), \cite{Sus05} (for magnetic field and induction in the presence of currents), \cite{BS07} (for the full system in the non-magnetic case), and \cite{Sus08} (for the general Lebesgue measure case).

A research programme similar to the above, although outside the context of thin structures and using a different analytical approach, has been pursued by Birman, Suslina, and subsequently by Suslina and her students, starting with \cite{BS04, BirmanSuslina_corrector}. At the heart of their technique is the notion of a spectral germ for a class of operator pencils, which quantifies the leading order of frequency dispersion of waves in a heterogeneous medium near the bottom of the spectrum of the associated differential operator with periodic coefficients. Complemented with the study of a Cauchy integral for a suitable operator-valued function of the spectral parameter, the analysis of the spectral germ allows one to obtain sharp operator-norm estimates for the resolvents in the direct integral representing the original operator via the standard Floquet-Bloch-Gelfand decomposition (parametrised by the quasimomentum $\chi,$ as we mention above), see {\it e.g.} \cite{Kuchment} for the background on Floquet theory.

{

Before proceeding to an extended summary of our results, we give an overview of existing literature on homogenisation methods that uses a version of the Floquet transform as a starting point. We should emphasise that this is the only crossover point of the approach of the present paper with this existing work: we do not use the spectral method, and the centrepieces of our analysis are a Helmholtz-type decomposition for vector fields, see Section \ref{Helmholtz_section} and a related Poincar\'{e}-type inequality (Assumption \ref{ass1} below), which we postulate and show to hold for some specific classes of singular measures, see Appendix B. Of course, the Poincar\'{e} inequality by itself provides information on the spectra of the operators involved, however we do not pursue this link and do not require it for the proof of the operator-norm convergence estimates. Instead, we develop a new tool for 
 proving the estimates, namely asymptotic expansions that are uniform in the quasimomentum, see Section \ref{proof_sec}. These are particularly effective in addressing the currently open problem of obtaining operator-norm estimates for the full system of Maxwell equations, the subject of our forthcoming paper \cite{CD20}.    

The existing works based on the applying the Floquet transform (equivalently, Bloch transform, Gelfand transform) to the original PDE 
are focussed around two cognate approaches to homogenisation, 
namely the so-called Bloch-wave homogenisation method and the operator germ technique mentioned above. Both stem from the idea 
that the macroscopic behaviour of PDE with periodic rapidly oscillating coefficients is related to the behaviour of the associated operator near the bottom of its spectrum --- an idea that goes back to  \cite{BLP} in the mathematical literature and some 30 years earlier \cite{Brillouin} in physics --- complemented with an appropriate perturbation analysis aimed at obtaining convergence estimates. The Bloch wave method has been more popular in the applied analysis community, aiming at the derivation of asymptotic models for heterogeneous media, however yielding weaker convergence statements than the analysis of the 
spectral projections of the operators entering the direct fibre decomposition and the study of an associated ``spectral germ" at the left edge of the spectrum. It can be argued, however, that from the point of view of quantitative error control in applications, the operator-norm analysis is preferable. This is especially important for the development of new tools for tackling problems in the currently intensive area of metamaterials, where resonant behaviour on the microscale necessitates operator-norm analysis, while formal approaches yield results for which only strong (and in some cases only weak) convergence can be established rigorously, see e.g. \cite{Wellander1, Wellander2, Sjoeberg_et_al}.
 
Among papers on Bloch-wave homogenisation, we should mention several works that have set a foundation for the method and established strong resolvent convergence in the classical setting of a scalar elliptic second-order PDE \cite{Allaire_Conca_1997, AC_Comptes_Rendus, ACV, CV, COV, CV_2002, SGV_2004}, leading to the analysis of the high-frequency spectrum, still in the framework of strong convergence \cite{AC}. Subsequently, the approach was applied to address 
the Stokes equation \cite{Allaire_et_al_Stokes_2007, AGV}, the heat equation \cite{BFM, OZ} and the above-cited earlier work \cite{Zhi89},
the system of equations of elasticity \cite{SGV_2005}, and fluid-solid interactions \cite{Conca_Lund, AC_fluid_solid, ACP}. In the context of hyperbolic problems, the Bloch wave method naturally leads to dispersive effective equations \cite{SS, DLS, Lamacz, ABV}, by picking up higher-order terms in the Bloch wave expansion. On the analytic side, the Bloch wave approach has been developed in the direction of the treatment of bounded domains \cite{COV_bounded}, the analysis of a class  formulations in terms of arbitrary Borel measures \cite{BF, BFR}, and bounds on effective properties \cite{BBF}.


Simultaneously, the spectral germ method, initiated by \cite{BS04}, has proved fruitful in obtaining operator-norm and energy estimates for a number of related problems:  boundary-value operators \cite{Suslina_Dirichlet, Suslina_Neumann}, parabolic semigroups \cite{Suslina_parabolic, Suslina_parabolic_corrector, Meshkova_Suslina}, hyperbolic groups \cite{BirmanSuslina_hyperbolic, Meshkova_hyperbolic_Math_Notes, BirmanSuslina_hyperbolic, Meshkova_hyperbolic_full, Meshkova_hyperbolic_Math_Notes}, perforated domains \cite{Suslina_dyrki}. 
Two further technical milestones for the progress along this avenue are boundary-layer analysis for bounded domains (as in \cite{Suslina_Dirichlet, Suslina_Neumann}) and two-parametric operator-norm estimates \cite{Suslina_two_parametric}.  It seems natural to conjecture that similar developments could be pursued in the context of arbitrary Borel measures, using the technique of the present paper, which we postpone to future publications.

An overview of the existing approaches to obtaining operator-norm estimates would not be complete without mentioning also the works \cite{Gri04, Griso_2006, Kenig} that use, respectively, the method of periodic unfolding and the analysis of boundary integral representations, as well as the paper \cite{ZhikovPastukhova} cited above, based on the analysis of the homogenisation corrector via ``Steklov smoothing", and the recent papers \cite{CEK19, CEK20}, which employ an analysis of appropriate Dirichlet-to-Neumann (or Poincar\'{e}-Steklov) operators.
 The methods of these works could also be considered in the context of thin and singular structures, however we refrain from pursuing the related discussion here.

We now turn to the description of our approach and its application to the system of Maxwell equations. Before proceeding to a more detailed description of the problem setup, we note that some ideas of the present paper (uniform Poincar\'{e} inequality and uniform asymptotic expansions for the fibre operators) have been implemented for norm-resolvent analysis of the behaviour of thin plates in the context of three-dimensional linearised elasticity \cite{CherVel}, and that the benefit of operator-norm estimates for quantitative analysis of the full time-dependent system of Maxwell equations has been recently demonstrated in \cite{DS} (albeit under the assumption of constant permeability of the medium). 
}

Consider a $Q$-periodic Borel measure $\mu$ on $\R^3$, where $Q=[0,1)^3$, such that $\mu(Q)=1$. For each $\varepsilon>0$ we define the ``$\varepsilon$-scaling" of $\mu,$ {\it i.e.} the $\varepsilon$-periodic measure $\mu^\varepsilon$ given by $\mu^\varepsilon(B)=\varepsilon^3\mu(\varepsilon^{-1}B)$ for all Borel sets $B\subset \R^3,$ so that $\mu^1\equiv\mu.$ Henceforth, we denote by $C_0^\infty ({\mathbb R}^3)$ the space of infinitely smooth functions with compact support on 
${\mathbb R}^3$ and by $L^2(\R^3,d\mu^\varepsilon)$ the space of functions with values in $\C^3$ that are square integrable over $\R^3$ with respect to the measure $\mu^\varepsilon.$ Throughout the paper, for vectors $a, b\in \C^3$ we denote by $a\cdot b$ their standard (sesquilinear) Euclidean inner product, and define all function spaces over the field ${\mathbb C}.$ 

We aim at analysing the long-scale properties of periodic structures described by the measures $\mu^\varepsilon,$ in the context of the Maxwell system of equations of electromagnetism, see {\it e.g.} \cite{Jackson}, \cite{Cessenat}. More precisely, in what follows we study the asymptotic behaviour, as $\varepsilon\to 0$, of the solutions $u^\varepsilon$ to the vector problems
\begin{equation}
\label{curleq_u}
\curl \bigl(A(\cdot / \varepsilon) \curl u^\varepsilon\bigr)+u^\varepsilon=f^\varepsilon, \quad \quad f^\varepsilon\in L^2(\R^3, d\mu^\varepsilon),
\end{equation}
where $A$ is a real-valued $\mu$-measurable matrix function, assumed to be $Q$-periodic, symmetric, bounded and uniformly positive definite.  
The right-hand sides $f^\varepsilon$ are assumed to be divergence-free,  in the sense that 
\begin{equation}
\int_{{\mathbb R}^3}f^\varepsilon\cdot\nabla\phi\,d\mu^\varepsilon=0\qquad \forall\phi\in C_0^\infty({\mathbb R}^3),\quad \forall\varepsilon>0.
\label{div_def}
\end{equation}
For example, the case when $f^\varepsilon=f$ for all $\varepsilon>0,$ where $f$ is a continuous function with compact support, is included in the above setup.

Equation (\ref{curleq_u}) is the resolvent form of the Maxwell system in the absence of external currents, see Appendix of the present paper in addition to the above-cited monographs by Jackson and Cessenat.
In the equation (\ref{div_def}), the unknown function $u^\varepsilon$ represents the divergence-free magnetic field $H^\varepsilon$, the matrix $A$ stands the inverse of the relative dielectric permittivity 
of the medium, and 
the relative magnetic permeability 
is set to unity (so the medium is ``non-magnetic"), see Appendix for details.  The right-hand sides $f^\varepsilon$ in (\ref{curleq_u}) play an auxiliary r\^{o}le in relation to the actual electromagnetic setup: they do not appear in the original  Maxwell system but are introduced in this article for purposes of the resolvent analysis of the ``reduced'' Maxwell operator on the left-hand side of (\ref{curleq_u}). 

 Our goal is to prove order $O(\varepsilon)$ operator-norm estimates for the difference between $u^\varepsilon$ and the solution $u^\varepsilon_{\rm hom}$ of an appropriate ``homogenised equation", which is derived as part of the proof and has the form 
\begin{equation}
\label{curlhomo_u}
\curl\bigl(A^{\rm hom}\curl u^\varepsilon_{\rm hom}\bigr)+{\mathcal M}^{\rm hom}_\varepsilon u^\varepsilon_{\rm hom}=f^\varepsilon. 
\end{equation}
Here,  $A^{\rm hom}$ is a constant matrix representing the effective (``homogenised'') properties of the medium, for each $\varepsilon$ the vector-function $f^\varepsilon$ is the same as in (\ref{curleq_u}) and ${\mathcal M}^{\rm hom}_\varepsilon$ is the pseudo-differential operator with symbol $M^{\rm hom}_{\varepsilon\theta},$ which is defined in our main statement (see Theorem \ref{main_thm} below), i.e.,
\[
({\mathcal M}^{\rm hom}_\varepsilon u)(x)=\frac{1}{(2\pi)^d}\int_{{\mathbb R}^3}\int_{{\mathbb R}^3}\exp(\theta\cdot(x-y))M^{\rm hom}_{\varepsilon\theta}u(y)d\mu^\varepsilon(y)
d\theta,\quad x\in{\mathbb R}^3,\quad\qquad u\in L^2({\mathbb R}^3, d\mu^\varepsilon).
\]
In other words, we aim at finding a matrix $A^{\rm hom}$ for which there exists $C>0,$ independent of $\varepsilon$ and $f^\varepsilon,$ such that\footnote{Note that the $\varepsilon$-dependence of the homogenised solution $u^\varepsilon_{\rm hom}$ is due to the $\varepsilon$-dependence of the right-hand side $f^\varepsilon$ and the operator ${\mathcal M}^{\rm hom}_\varepsilon.$ In the case when $\mu$ is the Lebesgue measure, 
	${\mathcal M}^{\rm hom}_\varepsilon$ is the identity operator, and the $\varepsilon$-dependence of $u^\varepsilon_{\rm hom}$ is entirely due to the $\varepsilon$-dependence of the right-hand side $f^\varepsilon.$} 
\begin{equation}
\label{op_norm_estimate_ann}
\bigl\| u^\varepsilon-u^\varepsilon_{\rm hom}\bigr\|_{L^2(\R^3, d\mu^\varepsilon)}\leq C\varepsilon\|f^\varepsilon\|_{L^2(\R^3, d\mu^\varepsilon)}\qquad \forall\varepsilon\in(0,1].
\end{equation}
Clearly, a matrix $A^{\rm hom}$ with this property is unique. A similar result is obtained in \cite[Chapter 7.3]{BS04} for the case when $\mu$ is the Lebesgue measure, using perturbation analysis of the operators in (\ref{curleq_u}) near the bottom of the spectrum of the operator associated with (\ref{curleq_u}). Our approach here is based on asymptotic expansions for solutions to weak formulations, rather than the analysis of spectral properties.


Solutions of \eqref{curleq_u} are understood as pairs $(u^\varepsilon,\curl u^\varepsilon)$ in the space $H^1_{\curl}(\R^3, d\mu^\varepsilon)$ defined as the closure of the set 
\begin{equation*}
\bigl\{(\phi, \curl\phi),\;\phi\in \bigl[C^\infty_0(\R^3)\bigr]^3\bigr\}
\end{equation*}
in the direct sum $L^2(\R^3, d\mu^\varepsilon) \oplus L^2(\R^3,d\mu^\varepsilon)$. 
We say that $(u^\varepsilon, \curl u^\varepsilon)$ is a solution to \eqref{curleq_u} if
\begin{equation}\label{curleq_uweak}
\int_{\R^3}A(\cdot /\varepsilon)\curl u^\varepsilon \cdot {\curl\varphi}\,d\mu^\varepsilon +\int_{\R^3} u^\varepsilon\cdot {\varphi}\,d\mu^\varepsilon=\int_{\R^3} f^\varepsilon\cdot {\varphi}\,d\mu^\varepsilon \quad \quad \forall \varphi\in \bigl[C^\infty_0(\R^3)\bigr]^3.
\end{equation}

Clearly, the set of test functions in the identity (\ref{curleq_uweak}) can be equivalently replaced by the space $H^1_{\curl}(\R^3, d\mu^\varepsilon).$ 
Then that for each $\varepsilon>0$ the left-hand side of \eqref{curleq_uweak} defines an equivalent inner product on $H^1_{\curl}(\R^3, d\mu^\varepsilon),$ while its right-hand side can be treated as a linear bounded functional on the same.
The existence and uniqueness of $u^\varepsilon$ satisfying the integral identity (\ref{curleq_uweak}) is then a consequence of the classical Riesz representation theorem for linear functionals in a Hilbert space.

In what follows we study the resolvent of the operator $\mathcal{A}^\varepsilon$ with domain
\begin{equation}
\label{star_eq}
\begin{aligned}
{\rm dom}({\mathcal A}^\varepsilon)&=\biggl\{u\in L^2(\R^3, d\mu^\varepsilon):\ \exists\, \curl u\in L^2(\R^3, d\mu^\varepsilon)\ {\rm such\ that}\\[0.3em]
&\int_{{\mathbb R}^3}A(\cdot/\varepsilon)\curl u \cdot  {\curl\varphi}\,d\mu^\varepsilon+\int_{{\mathbb R}^3}u\cdot {\varphi}\,d\mu^\varepsilon=\int_{{\mathbb R}^3}f\cdot {\varphi}\,d\mu^\varepsilon\quad \forall\varphi\in\bigl[C^\infty_0(\R^3)\bigr]^3
\\[0.3em]
&\qquad\qquad\qquad\qquad\qquad\qquad\qquad{\rm for\; some}\ f\in L^2({\mathbb R}^3, d\mu^\varepsilon),\ {\rm div}\,\!f=0\biggr\},
\end{aligned}
\end{equation}
defined by the formula $\mathcal{A}^\varepsilon u= f-u,$ where $f\in L^2({\mathbb R}^3, d\mu^\varepsilon),$ ${\rm div}\ \!\!f=0,$ and $u\in \rm{dom}(\mathcal{A}^\varepsilon)$ are linked\footnote{It is not difficult to show that for each $u\in {\rm dom}({\mathcal A}^\varepsilon)$ there exists only one $f$ with the property described in (\ref{star_eq}).}  as in (\ref{star_eq}). Notice that, in general, for a given $u\in L^2(\R^3, d\mu^\varepsilon)$ there may be more than one element $(u, \curl u)\in H^1_{\curl}(\R^3, d\mu^\varepsilon)$. However, for each $u\in \rm{dom}(\mathcal{A}^\varepsilon)$ there exists exactly one $\curl u$ such that \eqref{star_eq} holds, which is a consequence of the uniqueness of solution to the integral identity \eqref{curleq_uweak}.

Clearly, the operator $\mathcal{A}^\varepsilon$ is symmetric. Furthermore, similarly to \cite{CD18} we infer that $\rm{dom}(\mathcal{A}^\varepsilon)$ is dense in 
\[
\bigl\{u\in L^2(\R^3, d\mu^\varepsilon): \divv u=0\bigr\}.
\]
Indeed, by the definition of $\rm{dom}(\mathcal{A}^\varepsilon),$ see (\ref{star_eq}), if $f \in L^2(\R^3,d\mu^\varepsilon),$ ${\rm div}\,\!f=0$ and $u,v\in \rm{dom}(\mathcal{A}^\varepsilon)$ are such that $\mathcal{A}^\varepsilon u+u=f$ and $\mathcal{A}^\varepsilon v+v=u,$ one has
\begin{equation*}
\int_{\R^3} |u|^2 d\mu^\varepsilon=\int_{\R^3} f\cdot {v} \;d\mu^\varepsilon.
\end{equation*}
This identity entails that if $f$ is orthogonal to $\rm{dom}(\mathcal{A}^\varepsilon)$, then $u=0$ and so $f=0$. It follows from the definition of ${\mathcal A}^\varepsilon$ that its defect numbers are zero, hence it is self-adjoint.
Analogously, we define the operator $\mathcal{A}^{\rm hom}$ associated with the problem \eqref{curlhomo_u}, so that \eqref{curlhomo_u} holds if and only if $u^\varepsilon_{\rm hom}=(\mathcal{A}^{\rm hom}+I)^{-1} f^\varepsilon$.


 All integrals and differential operators below, unless indicated otherwise, are understood appropriately with respect to the measure $\mu$.
Throughout the paper we use the notation $e_\kappa$ for the exponent $\exp({\rm i}\kappa\cdot y)$, $y\in Q,$ $\kappa\in[-\pi, \pi)^3,$ and a similar notation  $e_\theta$ for the exponent $\exp({\rm i}\theta\cdot x),$ $x\in {\mathbb R}^3,$ $\theta\in\varepsilon^{-1}[-\pi, \pi)^3$.
We denote by $C_\#^\infty$ the set of $Q$-periodic functions in $C^\infty({\mathbb R}^3)$, and $\curl \phi$, $\curl(e_\kappa\phi)$ $\curl (e_{\varepsilon\theta}\phi)$ are the classical curls of smooth vector functions $\phi,$ $e_\kappa\phi,$ $e_{\varepsilon\theta}\phi.$ Finally, we denote by $L^2(Q,d\mu)$ the space of $\C^3$-valued functions 
that are square integrable over $Q$ with respect to the measure $\mu.$ 

The structure of the paper is as follows. In order to formulate the system of Maxwell equations in the setting of singular periodic structures, we introduce the notion of weak differentiability for functions that are square integrable with respect to a general Borel measure. In our approach to this task we follow the works \cite{Zhikov2000}, \cite{Zhikov2002} and \cite{ZhikovNote}. In Section \ref{Sobolev_section} we define the Sobolev spaces with respect to an arbitrary Borel measure and highlight some of their properties.
In Section \ref{Floquet_section} we introduce a suitable version of the classical Floquet transform and write a direct integral representation for the resolvents of 
the operators ${\mathcal A}^\varepsilon$ in terms of the resolvents of operators in $L^2(Q, d\mu),$ which are equivalently represented by the problems (\ref{ft_problemweak}) depending on the fibre parameter $\theta$ (``quasimomentum"). In Section \ref{Helmholtz_section} we extend the classical Helmholtz decomposition to the case of functions in $L^2(Q, d\mu)$ and introduce an appropriate generalisation of the Poincar\'{e} inequality, which we subsequently demonstrate to be sufficient for the norm-resolvent asymptotic analysis of the problems (\ref{ft_problemweak}). Section \ref{section_as} and Section \ref{proof_sec} cover the proof of our main results, Theorem \ref{main_thm} and Theorem \ref{corollary_main_thm}. This involves the analysis of a suitable asymptotic representation for the (parameter-dependent) solution to (\ref{ft_problemweak}) and a proof, based on our new quantitative tools, of remainder estimates for the difference between the solution and the leading-order term of the asymptotics. As in terms of the original ``physical" Maxwell system our main result is formulated for the magnetic component of the electromagnetic field, in Section \ref{electric_sec} we discuss how this translates to similar statements for the  electric field and displacement. Finally, in Appendix we discuss in more detail how the equation (\ref{curleq_u}) emerges from the dimensional analysis of the equations of electromagnetism. A reader wishing to get a better idea of the physical underpinnings of our analysis, may wish to inspect this appendix first.

\section{Sobolev spaces of quasiperiodic functions}
\label{Sobolev_section}




The aim of this section is to describe the functional analytic framework for our study of the problem (\ref{curleq_u}).
As a particular case of the notion of ``weak differentiability" of square-integrable vector functions with respect an arbitrary Borel measure, 
we introduce a suitable generalisation of the 
classical curl operator. In what follows, $\mu$ is an arbitrary $Q$-periodic Borel measure.

 
\begin{defi}\label{defiH1curl}
The space $H^1_{\curl}$
is defined as the closure of the set 
\begin{equation}
\bigl\{(\phi,\curl\phi),\ \phi\in[C^\infty_\#]^3 
\bigr\}
\label{Hcurldef}
\end{equation}
 in the product $L^2(Q, d\mu;\C^3)\times L^2(Q, d\mu;\C^3).$
\end{defi}

Elements of the closure (\ref{Hcurldef}) are pairs $(u,v)$, where $u, v\in L^2(Q, d\mu),$ such that
\begin{equation}\label{propH1curl}
\exists\,\{\phi_n\}\subset\bigl[C^\infty_\#\bigr]^3: \quad \quad \int_Q |\phi_n-u|^2 d\mu\stackrel{n\to\infty}{\longrightarrow}0 \quad \quad \int_Q |\curl \phi_n-v|^2 d\mu\stackrel{n\to\infty}{\longrightarrow}0
\end{equation}
The element\footnote{For a general measure $\mu,$ a vector $u\in L^2(Q, d\mu)$ has multiple curls with respect to $\mu.$ In particular, any vector $g\in L^2(Q,d\mu)$ with the property
\begin{equation*}
\exists\,\{\phi_n\}\subset[C^\infty_\#(Q)]^3: \quad
\int_Q |\phi_n|^2d\mu\stackrel{n\to\infty}{\longrightarrow}0, \quad \quad \int_Q |g-\curl\phi_n|^2d\mu\stackrel{n\to\infty}{\longrightarrow}0
\end{equation*}
is a curl with respect to $\mu$ of the zero vector, {\it i.e.} a ``curl of zero".} $v$ in \eqref{propH1curl} is referred to as a curl of $u$ with respect to $\mu.$ 
We will often use the notation $\curl u$ without indicating the measure $\mu$ explicitly, assuming that it is clear from the context what the measure is.



We now extend to the vector setting (see {\it e.g.} \cite{CD18} for the scalar case) the definition of the Sobolev  space of quasiperiodic functions with respect to 
the measure $\mu.$ 
\begin{defi}
\label{H1kappa_def}
For each $\kappa\in [-\pi, \pi)^3=:Q',$ the space $H^1_{\curl,\kappa}$ is defined as the closure of the set ({\it cf.} (\ref{Hcurldef})) 
\begin{equation}
\bigl\{\bigl(e_\kappa \phi, \curl(e_\kappa\phi)\bigr): \phi\in [C_\#^\infty]^3\bigr\}
\label{curl_def_again}
\end{equation}
with respect the standard norm in $L^2(Q, d\mu) \oplus L^2(Q, d\mu)$. For $(u,v) \in H^1_{\curl,\kappa},$ we denote by $\curl(e_\kappa u)$ the second element $v$ in the pair, which we sometimes refer to as a ``$\kappa$-curl of $u$." We will continue using the notation 
$H^1_{\curl}$ (see Definition \ref{defiH1curl}) for the space $H^1_{\curl,\kappa}$ with $\kappa=0.$
\end{defi}
Note that there may be different elements in $H^1_{\curl,\kappa}$ with the same first component. Indeed, for any pair $(u,v) \in H^1_{\curl,\kappa}$ and a vector function $w$ obtained as the limit in 
$L^2(Q, d\mu)$ of $\curl(e_\kappa\phi_n)$ for a sequence $\{\phi_n\}\subset [C_\#^\infty]^3$ converging to zero in $L^2(Q, d\mu)$, the element $(u, v+w)$ is also in $H^1_{\curl,\kappa}$.
In addition, $H^1_{\curl,\kappa}$ and $H^1_{\curl, 0}$ are related by a one-to-one map. Indeed, for any $(u, v)\in H^1_{\curl,\kappa},$ the pair ${({\overline{e}_\kappa}u,\,{\overline{e}_\kappa}(v-{\rm i}\kappa\times u))}$ is an element of $H^1_{\curl},$ which follows from 
\[
\curl \phi_n=\curl(\overline{e}_\kappa e_\kappa\phi_n)=\overline{e}_\kappa\curl(e_\kappa\phi_n)-{\rm i}\kappa\times\phi_n,
\]
for all sequences $\{\phi_n\}\subset[C_\#^\infty]^3$ such that $e_\kappa\phi_n\to0,$ $\curl(e_\kappa\phi_n)\to0$ in $L^2(Q, d\mu)$ as $n\to\infty.$ Conversely, for all $(\widetilde{u}, \widetilde{v})\in H_{\curl}^1$ one has $\widetilde{v}={\overline{e}_\kappa}(v-{\rm i}\kappa\times u)$ for some $(u, v)\in H^1_{\curl,\kappa}$.

We say that $F\in L^2(Q,d\mu)$ is divergence-free (more precisely, ${\rm div}_\kappa$-free), or solenoidal, and write ${\overline{e}_\kappa}{\rm div}(e_\kappa F)=0,$ if
\begin{equation}
\int_Qe_\kappa F\cdot {\nabla(e_\kappa\phi)}\,d\mu=0\qquad \forall\phi\in C_\#^\infty.
\label{weak_div_cond}
\end{equation}
Now suppose that $A$ is a $\mu$-measurable, $\mu$-essentially bounded, symmetric, pointwise positive real-valued matrix function such that $A^{-1}$
is $\mu$-essentially bounded. For each $\kappa\in Q'$ we analyse the operator $\mathcal{A}_\kappa$ with domain ({\it cf.} (\ref{star_eq}))
\begin{equation}
\label{Akappa_def}
\begin{aligned}
{\rm dom}({\mathcal A}_\kappa)&=\biggl\{u\in L^2(Q, d\mu):\ \exists\, \curl(e_\kappa u)\in L^2(Q, d\mu)\ {\rm such\ that}
\\[0.3em]
&\int_QA\curl (e_\kappa u) \cdot  {\curl(e_\kappa\varphi)}\,d\mu+\int_Q u\cdot {\varphi}\,d\mu=\int_Q F\cdot {\varphi}\,d\mu\quad \forall \varphi \in\bigl[C_\#^\infty\bigr]^3\\[0.3em]
&\qquad\qquad\qquad\qquad\qquad\qquad{\rm for\ some}\ F\in L^2(Q, d\mu),\ \ {\overline{e}_\kappa}{\rm div}(e_\kappa F)=0\biggr\},
\end{aligned}
\end{equation}
defined by the formula $\mathcal{A}_\kappa u= F-u,$ where $F\in L^2(Q, d\mu)$ and $u\in \rm{dom}(\mathcal{A}_\kappa)$ are linked as in (\ref{Akappa_def}). By an argument similar to the case of $\mathcal{A}^\varepsilon$, the domain $\rm{dom}(\mathcal{A}_\kappa)$ is dense in 
\[
\bigl\{u\in L^2(Q, d\mu): {\overline{e}_\kappa}\divv(e_\kappa u)=0\bigl\},
\] 
and $\mathcal{A}_\kappa$ is self-adjoint.


\section{Floquet transform}
\label{Floquet_section}
In this section we define, similarly to the scalar case discussed in \cite{CD18}, a representation for functions in $L^2(\R^3, d\mu^\varepsilon)$ that is unitarily equivalent to Gelfand transform \cite{Gelfand}. In the paper \cite{ZP16}, properties of the Gelfand transform with respect to the arbitrary periodic Borel measure $\mu$ have been studied and their applications to spectral analysis of elliptic PDE have been discussed. Here we describe its ``Floquet version", which is unitary equivalent 
via a multiplication by the function $e_\kappa$ (whose $L^2$ norm is clearly unity).
\begin{defi}
For $\varepsilon>0$ and $u\in [C^\infty_0(\R^3)]^3$, the $\varepsilon$-Floquet transform ${\mathcal F}_\varepsilon u$ is the function  
\[
({\mathcal F}_\varepsilon u)(y, \theta)=\biggl(\frac{\varepsilon^2}{2\pi}\biggr)^{3/2}\sum_{n\in {\mathbb Z}^3}u(\varepsilon y+\varepsilon n)\exp(-{\rm i}\varepsilon n\cdot\theta),\qquad y\in Q,\quad \theta\in\varepsilon^{-1}Q'=\varepsilon^{-1}[-\pi, \pi)^3.
\]
\end{defi}
Note for a given $u\in [C^\infty_0(\R^3)]^3,$ the function ${\mathcal F}_\varepsilon u={\mathcal F}_\varepsilon u(y, \theta)$ is $\varepsilon\theta$-quasiperiodic on $Q$ as a function of $y$ and $\varepsilon^{-1}Q'$-periodic as a function of $\theta.$  The mapping $\mathcal{F}_\varepsilon$ preserves the norm and can be extended to an isometry 
\[
L^2(\R^3, d\mu^\varepsilon)\longrightarrow L^2(Q\times\varepsilon^{-1}Q', d\mu\times d\theta),
\]
for which we keep the same notation $\mathcal{F}_\varepsilon$ and the term ``$\varepsilon$-Floquet transform". By an argument similar to that given in \cite[Section 3]{CD18}, 
the mapping $\mathcal{F}_\varepsilon$ is shown to be unitary\footnote{Note that in \cite[Section 3]{CD18}, the scalar version of the transform we denote here by $\mathcal{F}_\varepsilon$ was introduced as a product of two unitary transforms, one of which was labelled by 
$\mathcal{F}_\varepsilon$ while the other was a unitary rescaling.} for all $\varepsilon>0,$ and its inverse
is given by the formula
\begin{equation*}
({\mathcal F}_\varepsilon^{-1}g)(x)=(2\pi)^{-3/2}\int_{\varepsilon^{-1}Q'}g\biggl(\frac{x}{\varepsilon}, \theta\biggr)\,d\theta,\quad x\in {\mathbb R}^3\quad \qquad \forall g\in L^2(Q\times \varepsilon^{-1}Q', d\mu\times d\theta),
\end{equation*}
where for each $\theta\in\varepsilon^{-1}Q'$ the function $g\in L^2(Q\times\varepsilon^{-1}Q', d\mu\times d\theta)$ is extended as a $\theta$-quasiperiodic function to the whole of ${\mathbb R}^3$ so that 
\[
g(z, \theta)=\widetilde{g}(z, \theta)\exp({\rm i}z\cdot\theta),\quad z\in{\mathbb R}^3,\qquad \widetilde{g}(\cdot, \theta)\ \ Q{\text{\rm -periodic.}}
\]



As a result of applying the transform ${\mathcal F}_\varepsilon$ to the operator ${\mathcal A}_\varepsilon$ of the problem (\ref{curleq_u}), we obtain the following representation for the resolvent of ${\mathcal A}_\varepsilon.$
\begin{prop}
\label{propequivalence}
For each $\varepsilon>0,$ the following unitary equivalence between the resolvent of the operator $\mathcal{A}^\varepsilon$ and the direct integral of the family of resolvents for $\mathcal{A}_{\varepsilon\theta}$, $\theta\in \varepsilon^{-1} Q',$ holds:
\[
({\mathcal A}^\varepsilon+I)^{-1}={\mathcal F}_\varepsilon^{-1}
\biggl(\int_{\varepsilon^{-1}Q'}^\oplus e_{\varepsilon\theta}(\varepsilon^{-2}{\mathcal A}_{\varepsilon\theta}+I)^{-1}\overline{e}_{\varepsilon\theta}\,d\theta\biggr)
{\mathcal F}_\varepsilon,
\]
where $\overline{e}_{\varepsilon\theta},$ $e_{\varepsilon\theta}$  represent the operators of multiplication by  
$\overline{e}_{\varepsilon\theta},$ $e_{\varepsilon\theta},$ respectively.
\end{prop}
\begin{proof}[Sketch of the proof]
The argument is similar to that given in \cite{CC16}, \cite{CD18} for the scalar case. We consider the solution $(u^\varepsilon, \curl u^\varepsilon)\in H^1_{\curl}$ of the problem \eqref{curleq_u} with $f^\varepsilon\in[C^\infty_0(\R^3)]^3.$ 
We then introduce the ``periodic amplitude" of its $\varepsilon$-Floquet transform
\begin{equation}\label{u_theta^varepsilon}
u_\theta^\varepsilon(y):= \overline{e}_{\varepsilon\theta}
{\mathcal F}_\varepsilon u^\varepsilon(y)=\biggl(\frac{\varepsilon^2}{2\pi}\biggr)^{3/2}\sum_{n\in{\mathbb Z}^3}u^\varepsilon(\varepsilon y+\varepsilon n)\exp\bigl(-{\rm i}(\varepsilon y+\varepsilon n)\cdot\theta\bigr),\quad y\in Q.
\end{equation}
By approximating $u_\theta^\varepsilon$ with smooth functions, it is straightforward to see that if, for each 
choice of $\curl u^\varepsilon,$ we write 
\[
\curl(e_{\varepsilon\theta}u_\theta^\varepsilon)(y)=\varepsilon\biggl(\frac{\varepsilon^2}{2\pi}\biggr)^{3/2}\sum_{n\in{\mathbb Z}^3}\curl u^\varepsilon(\varepsilon y+\varepsilon n)\exp\bigl(-{\rm i}\varepsilon n\cdot\theta\bigr),\qquad y\in Q,
\]
then 
$(e_\kappa u^\varepsilon_\theta, \curl(e_\kappa u^\varepsilon_\theta)) \in H^1_{\curl,\kappa} (Q, d\mu)$. Furthermore,
\begin{equation}
\begin{aligned}
\varepsilon^{-2}\int_QA\curl(e_{\varepsilon\theta}u_\theta^\varepsilon)\cdot {\curl(e_{\varepsilon\theta}\varphi)}\,d\mu&+\int_Qe_{\varepsilon\theta}u_\theta^\varepsilon\cdot {e_{\varepsilon\theta}\varphi}\,d\mu\\[0.5em]
&=\int_Qe_{\varepsilon\theta}F\cdot {e_{\varepsilon\theta}\varphi}\,d\mu\qquad \forall\varphi\in\bigl[C_\#^\infty\bigr]^3,
\end{aligned}
\label{ft_problemweak}
\end{equation}
where $F:= \overline{e}_{\varepsilon\theta}
\mathcal{F}_\varepsilon f$. It is verified directly that $F$ is solenoidal, {\it cf.} (\ref{weak_div_cond}).
By the density of $f\in[C^\infty_0(\R^3)]^3$ in $L^2(\R^3,d\mu^\varepsilon)$ (see {\it e.g.} \cite[Chapter 9]{Makarov_Podkorytov}),
we obtain the claim. 
\end{proof}

In what follows, we study the asymptotic behaviour, as $\varepsilon\to0,$ of the solutions $u_\theta^\varepsilon$ to the problems
\begin{equation}
\label{ft_problem}
\varepsilon^{-2} \overline{e}_{\varepsilon\theta} \curl\bigl(A \curl( e_{\varepsilon\theta} u_\theta^\varepsilon)\bigr) + u_\theta^\varepsilon=F \quad\quad \varepsilon >0, \quad \theta\in\varepsilon^{-1} Q',
\end{equation}
for all solenoidal $F\in L^2(Q, d\mu).$ 
For each right-hand side $F,$ problem (\ref{ft_problem}) is understood in the sense of the identity (\ref{ft_problemweak}).
We will show that $u_\theta^\varepsilon$ is $\varepsilon$-close with respect to the norm of $L^2(Q, d\mu)$, uniformly in 
$\theta\in\varepsilon^{-1}Q'$ to the constant vector $c^\varepsilon_\theta$ solving the ``homogenised" equation associated with \eqref{ft_problem}:
\begin{equation}
\label{homo_problem_ft}
\theta\times  A^{\rm hom}(\theta \times c^\varepsilon_\theta) +M^{\rm hom}_{\varepsilon\theta}c^\varepsilon_\theta=\int_Q F d\mu, \quad \quad \theta\in \varepsilon^{-1} Q',
\end{equation}
where the matrices $A^{\rm hom},$ $M^{\rm hom}_\kappa,$ $\kappa\in[-\pi,\pi)^3,$ will be defined in Section \ref{section_as} (see, in particular, the formula (\ref{Ahom_formula}).)
Note that by setting $\phi=1$ in (\ref{weak_div_cond}) one infers that
\[
\theta\cdot\int_Q F \;d\mu=0.
\] 
We will use this observation in the proof (Section \ref{proof_sec}) of the estimate stated in the main result,  Theorem \ref{corollary_main_thm}.


\section{Quasiperiodic 
Helmholtz decomposition }
\label{Helmholtz_section}
In the asymptotic analysis of systems of Maxwell equations, the Helmholtz (or Weyl, or Hodge) decomposition \cite[Chapter 2]{Cessenat}, \cite[Chapter 9]{DL}, \cite[Section 3.7]{Monk} for square-integrable functions proves useful. It provides a convenient geometric interpretation of the degeneracy in the problem, namely the fact that the differential expression vanishes on the infinite-dimensional space of gradients of $H^2$ functions, which suggests representing the relevant $L^2$ space as an orthogonal sum of curl-free functions with zero mean, divergence-free functions with zero mean and constants. In the present work we require a special version of the decomposition, which takes into account the quasiperiodicity of the functions involved and also incorporates a class of periodic Borel measures for the underlying $L^2$ space.  

Before formulating the next proposition, we recall that, similarly to the construction of Section \ref{Sobolev_section}, the notions of a gradient of a quasiperiodic $L^2$ function with respect to the measure $\mu$ and the associated Sobolev spaces $H^1_{\kappa}$ of $\kappa$-quasiperiodic functions, $\kappa\in Q',$ as well as the Sobolev space of periodic functions $H^1_\#,$ with respect to the measure $\mu$ can be defined. In particular, for each $\kappa\in [-\pi, \pi)^3=:Q',$ the space $H^1_{\kappa}$ is defined as the closure of the set ({\it cf.} (\ref{curl_def_again})) 
\[
\bigl\{\bigl(e_\kappa \phi, \nabla(e_\kappa\phi)\bigr): \phi\in C_\#^\infty \bigr\}
\] 
with respect the standard norm in $L^2_{\rm s}(Q, d\mu) \oplus L^2(Q, d\mu)$,
 where $L^2_{\rm s}(Q, d\mu)$ is the space of ${\mathbb C}$-valued functions on $Q$ that are square integrable with respect to the measure $\mu$ (so that $L^2(Q, d\mu)=[L^2_{\rm s}(Q, d\mu)]^3.$) For $(u,v) \in H^1_{\kappa},$ we denote by 
 $\nabla(e_\kappa u)$ the second element $v$ in the pair and use the notation 
$H^1_\#$ for the space $H^1_{\kappa}$ with $\kappa=0.$  We do not dwell on these definitions further and instead refer the reader to \cite{CD18}.  



Denote by $C^\infty_{\#,0}$ the set of infinitely smooth $Q$-periodic functions with zero mean over $Q$, and by $H^1_{\#,0}$ 
the subspace of $H^1_\#$ consisting of functions with zero mean over $Q$. A key ingredient of our generalisation of the Helmholtz decomposition is the following construction.

\begin{prop}\label{prop_eqPhi}
Suppose that $u\in L^2(Q, d\mu).$ The problem
\begin{align}
{\overline{e}_\kappa}\triangle (e_\kappa \Phi_u) = {\overline{e}_\kappa}\divv(e_\kappa u), \label{eq_Phi}
\end{align}
understood in the sense that 
\begin{equation}\label{eq_Phi_weak}
\int_Q \nabla(e_\kappa \Phi_u)\cdot  {\nabla(e_\kappa \varphi)}\; d\mu = \int_Q e_\kappa u\cdot {\nabla(e_\kappa \varphi)} \; d\mu \quad \quad \forall \varphi\in C^\infty_{\#,0},
\end{equation}
has a unique solution $\Phi_u \in H^1_{\#,0}.$
\end{prop}
\begin{proof}
Considering the sesquilinear form on the left-hand side of \eqref{eq_Phi_weak}, the existence and uniqueness of solution $\Phi_u$ follows from the Lax-Millgram theorem, see {\it e.g.} \cite{BLP}. Indeed, the continuity of the form is obtained by setting $\nabla (e_\kappa u)=e_\kappa ({\rm i}\kappa u+\nabla u)$ for all scalar functions $u\in H^1_\#.$ The coercivity is a consequence of the $\kappa$-uniform Poincar\'{e} inequality proved in \cite{CD18} in the scalar setting. 
\end{proof}

Using the above statement, for each $u\in L^2(Q,d\mu)$ we write
\begin{equation}
\label{decomposition}
u=\widetilde{u}+\int_Q u+{\overline{e}_\kappa}\nabla (e_\kappa \Phi_u),
\end{equation}
where the function $\widetilde{u}$ satisfies the following conditions on its divergence and mean:
\begin{align}
&{\overline{e}_\kappa}\divv \biggl(e_\kappa\biggl(\widetilde{u}+\int_Q u\biggr)\biggr)=0, \label{eq_utilde}\\
&\int_Q\bigl(\widetilde{u}+{\overline{e}_\kappa}\nabla (e_\kappa \Phi_u)\bigr)=0. \label{mean_utilde_Phi}
\end{align}
The uniqueness part of Proposition \ref{prop_eqPhi} implies that there is a unique function $\Phi_u$ with zero mean such that \eqref{decomposition} holds, and hence $\widetilde{u}$ is also defined uniquely.

In what follows we make the following assumption about the measure $\mu.$ 
\begin{assumption}
\label{ass1}
There exists $C_{\rm P}>0$  such that for all 
$\kappa\in Q'$ and $(e_\kappa u, \curl(e_\kappa u))\in H^1_{\curl,\kappa}$
the following Poincar\'{e}-type inequality holds: 
\begin{equation}
\label{curlpoincare}
\biggl\|u-\int_Q u-{\overline{e}_\kappa}\nabla(e_\kappa \Phi_u)+\int_Q{\overline{e}_\kappa}\nabla(e_\kappa \Phi_u)\biggr\|_{L^2(Q,d\mu)}\leq C_{\rm P}\bigl\|\curl(e_\kappa u)\bigr\|_{L^2(Q,d\mu)}.
\end{equation}
\end{assumption}


\begin{rmk}
For each fixed $(e_\kappa u, \curl(e_\kappa u))\in H^1_{\curl,\kappa},$ denote 
\[
{\mathfrak u}:=u-{\overline{e}_\kappa}\nabla(e_\kappa \Phi_u),
\]
and notice that $\curl(e_\kappa u)$ is one of the $\kappa$-curls of the function ${\mathfrak u}$ thus defined, since zero is one of the $\kappa$-curls of ${\overline{e}_\kappa}\nabla(e_\kappa \Phi_u).$
Then one has $\overline{e}_\kappa{\rm div}\,(e_\kappa{\mathfrak u})=0,$ and (\ref{curlpoincare}) takes the form 
\begin{equation}
\biggl\|{\mathfrak u}-\int_Q{\mathfrak u}\biggr\|_{L^2(Q,d\mu)}\leq C_{\rm P}\bigl\|\curl(e_\kappa {\mathfrak u})\bigr\|_{L^2(Q,d\mu)}
\label{poincare_v}
\end{equation}
\end{rmk}
It can be shown that the following periodic measures satisfy Assumption \ref{ass1} (and, equivalently, the Poincar\'{e} inequality (\ref{poincare_v})): 

(a) Consider a finite set $\{{\mathcal P}_j\}_{j=1}^N$ of planes each of which is orthogonal to one of the coordinate axes and such that 
$(\cup_{j=1}^N{\mathcal P}_j)\cap Q$ is non-empty and connected. Define the measure $\mu$ on $Q$ by the formula 
\[
\mu(B)=N^{-1}\sum_j\vert {\mathcal P}_j\cap B\vert_2\ \ {\rm for\ all\ Borel\ } B\subset Q,
\]  
where $\vert\cdot\vert_2$ represents the $2$-dimensional Lebesgue measure, {\it i.e.} $\vert{\mathcal P}_j\cap B\vert_2$ is the area of ${\mathcal P}_j\cap B.$ In other words, $\mu$ is the two-dimensional Hausdorff measure on $(\cup_{j=1}^N{\mathcal P}_j)\cap Q,$ normalised by $N=\sum_j\vert {\mathcal P}_j\cap Q\vert_2.$ 

(b) The suitably normalised two-dimensional Hausdorff measure on the intersection with $Q$ of a rigid rotation in ${\mathbb R}^3$ of the union $\cup_{j=1}^N{\mathcal P}_j$ described in a.

(c) The suitably normalised two-dimensional Hausdorff measure on a finite union of sets from the class described in b, under the condition that the union is connected.

(d) The (three-dimensional) Lebesgue measure on $Q.$


(e) Consider a finite set  $\{\mu_j\}_{j=1}^M$ of measures satisfying any of the conditions a, b, d, such that the union of the supports $S_j:={\rm supp}(\mu_j),$ $j=1,\dots, M,$ is connected. Define the measure $\mu$ by the formula
\[
\mu(B)=\frac{\sum_{j=1}^M\mu_j(S_j\cap B)}{\sum_{j=1}^M\mu_j(S_j)}\ \ {\rm for\ all\ Borel\ } B\subset Q.
\]  
(Note that c is a particular case of e.)





\section{Asymptotic approximation for $u_\theta^\varepsilon$}
\label{section_as}

In what follows, we often drop the second component vectors in $H^1_{\curl}$ or $H^1_{\curl, \kappa}$ and write $u\in H^1_{\curl},$ and $u\in H^1_{\curl, \kappa}$ meaning ``the first component $u$ of an element of $H^1_{\curl}$" and  
``the first component $u$ of an element of $H^1_{\curl, \kappa}$", respectively.

 In order to write an asymptotic expansion for the solution $u^\varepsilon_\theta$ of \eqref{ft_problem}, we consider the following ``cell problem" ({\it cf.} \cite{CE16}) for a matrix-valued function $\widehat{N}:$
\begin{equation}
\label{curl_cellpb}
\curl(A\curl\widehat{N})=-\curl A,\qquad \divv\widehat{N}=0,\qquad \int_Q\widehat{N}=0,
\end{equation}
where $(\curl A)_{ij}=\epsilon_{ilk}A_{kj, l}$ (and similarly $(\curl \widehat{N})_{ij}=\epsilon_{ilk}\widehat{N}_{kj,l})$ and $(\divv \widehat{N})_i=\widehat{N}_{si,s},$ $i, j=1,2,3,$ where $(\epsilon_{ijk})_{i, j,k=1}^3$ is the Levi-Civita tensor.

The first equation in (\ref{curl_cellpb})  is understood in the sense of the integral identity
\begin{equation}
\label{weak_curlcellpb}
\int_Q A \curl \widehat{N}\,{\curl \varphi}= -\int_Q A\, {\curl \varphi} \quad \quad \forall \varphi \in \bigl[C_\#^\infty\bigr]^3.
\end{equation}
\begin{prop}
There exists a unique matrix-valued function $\widehat{N}$ with columns in $H^1_{\curl}$ that solves \eqref{curl_cellpb}. 
\end{prop}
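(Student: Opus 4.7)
The plan is to apply the Lax--Milgram theorem on a suitable closed subspace of $H^1_{\curl}(Q,d\mu)$, treating $\widetilde{N}$ column by column. For each column $A_j$ of $A$, I seek $\widetilde{N}_j$ in
\[
V:=\bigl\{u\in H^1_{\curl}(Q,d\mu)\,:\,\divv u=0,\ \textstyle\int_Q u\,d\mu=0\bigr\},
\]
solving
\[
a(\widetilde{N}_j,\phi):=\int_QA\curl\widetilde{N}_j\cdot\overline{\curl\phi}\,d\mu=-\int_QA_j\cdot\overline{\curl\phi}\,d\mu\qquad\forall\phi\in V,
\]
after which the full matrix $\widetilde{N}$ is obtained by reassembling the columns.

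The first step is to check that $\langle u,v\rangle_V:=\int_Q\curl u\cdot\overline{\curl v}\,d\mu$ is an equivalent inner product on $V$, which is precisely Proposition \ref{poincare_prop} at $\kappa=0$: for $u\in V$, the scalar potential $\Phi_u$ of Proposition \ref{prop_eqPhi} vanishes by uniqueness (the right-hand side of \eqref{eq_Phi_weak} is zero because $\divv u=0$) and $\int_Q u\,d\mu=0$, so the inequality degenerates to $\|u\|_{L^2(Q,d\mu)}\le C_{\rm P}\|\curl u\|_{L^2(Q,d\mu)}$. Granted this, boundedness and coercivity of $a$ on $V$ follow at once from the $\mu$-essential bounds on $A$ and $A^{-1}$, while boundedness of the antilinear functional $\ell_j(\phi):=-\int_QA_j\cdot\overline{\curl\phi}\,d\mu$ is immediate from Cauchy--Schwarz. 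Lax--Milgram then yields a unique $\widetilde{N}_j\in V$ satisfying the identity above for every $\phi\in V$.

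To promote the test-function class to all $\phi\in[C_\#^\infty]^3$ as required by \eqref{weak_curlcellpb}, I apply the Helmholtz decomposition \eqref{decomposition} with $\kappa=0$, writing $\phi=\widetilde{\phi}+\int_Q\phi+\nabla\Phi_\phi$: the curls of the constant and gradient summands vanish, while $\widetilde{\phi}\in V$, so the identity established on $V$ extends verbatim. Uniqueness in $H^1_{\curl}(Q,d\mu)$ follows by testing the homogeneous version of the equation against the difference of two candidate solutions and using coercivity to conclude that the curl of the difference vanishes, then Poincaré to conclude the difference itself is zero. The main subtlety I expect is the multi-valued nature of $\curl$ on $H^1_{\curl}(Q,d\mu)$: the pair $(\widetilde{N}_j,\curl\widetilde{N}_j)$ produced by Lax--Milgram genuinely belongs to $H^1_{\curl}$ because it is approximated in both components by smooth pairs, and the weak formulation pins down $\curl\widetilde{N}_j$ uniquely through the coercivity estimate, exactly as in the earlier discussion of $\mathcal{A}^\varepsilon$.
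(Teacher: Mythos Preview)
Your proof is correct and follows the same variational strategy as the paper: work on the closed subspace $V=\{u\in H^1_{\curl}(Q,d\mu):\divv u=0,\ \int_Q u\,d\mu=0\}$, establish coercivity of the sesquilinear form $\int_Q A\curl u\cdot\overline{\curl v}\,d\mu$, and apply Lax--Milgram (the paper phrases this as the Riesz representation theorem, which is the same thing here since $A$ is symmetric).

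The one substantive difference is the source of coercivity. The paper invokes the compactness of the embedding $V\hookrightarrow L^2(Q,d\mu)$ assumed at the start of Section~6; combined with the standing assumption from Section~\ref{Sobolev_section} that $\curl u=0$ forces $u=\nabla\psi+a$ (hence $u=0$ on $V$), this yields a positive lowest eigenvalue and thus coercivity. You instead appeal directly to Proposition~\ref{poincare_prop} at $\kappa=0$, which under the hypothesis~\eqref{smooth_curl_Poincare} gives the Poincar\'e bound $\|u\|_{L^2}\le C_{\rm P}\|\curl u\|_{L^2}$ on $V$ without passing through compactness. Your route is more self-contained and avoids the implicit spectral-gap argument; the paper's route ties the result to the compactness assumption that is needed elsewhere anyway. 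Your explicit extension of the test-function class from $V$ to all of $[C_\#^\infty]^3$ via the Helmholtz decomposition, noting that both sides of \eqref{weak_curlcellpb} depend only on $\curl\phi$, fills in a step the paper leaves implicit.
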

\begin{proof} 
It follows from Assumption \ref{ass1} with $\kappa=0$
that the skew-symmetric sesquilinear form 
\[
\int_Q A \curl u\cdot {\curl v},\qquad u, v\in H^1_{\curl}\cap\biggl\{u: \divv u=0, \int_Q u=0\biggr\},
\]
is coercive. Noting also that it is also clearly continuous,
the claim follows by the Riesz representation theorem.
\end{proof}

In terms of the family of $\theta$-parametrised problems (\ref{ft_problem}) out main result is stated as follows.
\begin{thm}
\label{main_thm}
For each $\varkappa\in Q',$ denote by $\Psi_\kappa$ the vector with components in $H^1_{\#,0}$ that solves
\begin{equation}
	\overline{e}_\kappa{\rm div}\bigl(\nabla(e_\kappa\Psi_\kappa)+e_\kappa I\bigr)=0.
	\label{Psi_equation}
\end{equation}
The following estimate holds for the solutions to \eqref{ft_problem} with a constant $C>0$ independent of $\varepsilon,\; \theta, \; F$:
\begin{equation}\label{main_stat}
\Bigl\|u_\theta^\varepsilon-\bigl(\overline{e}_{\varepsilon\theta}\nabla (e_{\varepsilon\theta}\Psi_{\varepsilon\theta})+I\bigr)c^\varepsilon_\theta\Bigr\|_{L^2(Q,d\mu)}\leq C \varepsilon \|F\|_{L^2(Q,d\mu)},
\end{equation}
where $c^\varepsilon_\theta$ is the vector solution of the homogenised problem \eqref{homo_problem_ft}, that is
\begin{equation}\label{c_theta}
c^\varepsilon_\theta=c^\varepsilon_\theta(F)=\bigl({\mathfrak A}^{\hom}_\theta+M^{\rm hom}_{\varepsilon\theta}\bigr)^{-1}\int_Q F.
\end{equation}
Here ${\mathfrak A}^{\hom}_\theta$
is the matrix-valued quadratic form given by the first term on the left-hand side of equation \eqref{homo_problem_ft}, where 
\begin{equation}
   A^{\rm hom}:= \int_Q A(\curl\widehat{N}+I),
\label{Ahom_formula}
\end{equation}
and for each $\kappa\in[-\pi,\pi)^3,$ the matrix $M^{\rm hom}_\kappa$ is given by 
\[
M^{\rm hom}_\kappa:=\int_Q\bigl(\overline{e}_{\kappa}\nabla(e_{\kappa}\Psi_{\kappa})+I\bigr)d\mu.
\]
\end{thm}
\begin{rmk}
For each $\theta\in{\mathbb R}^3$ the matrix ${\mathfrak A}^{\hom}_\theta$ can be written as
\begin{equation*}
({\mathfrak A}^{\hom}_\theta)_{ij}=A^{\rm hom}_{kl}\epsilon_{kit}\epsilon_{lsj}\theta_t\theta_s,\qquad i,j=1,2,3,
\end{equation*}
where, as above, $(\epsilon_{ijk})_{i, j,k=1}^3$ is the Levi-Civita tensor.
\end{rmk}

Combined with Proposition \ref{propequivalence}, the uniform estimate (\ref{main_stat}) yields the following result, announced in Introduction, {\it cf.}
 (\ref{op_norm_estimate_ann}).
\begin{thm}
\label{corollary_main_thm}
There exists $C>0,$ independent of $\varepsilon$ and the choice of $f^\varepsilon\in L^2(\R^3, d\mu^\varepsilon),$ such that
\begin{equation}
\|u^\varepsilon-u^\varepsilon_{\rm hom}\|_{L^2(\R^3, d\mu^\varepsilon)}\leq C\varepsilon \|f^\varepsilon\|_{L^2(\R^3, d\mu^\varepsilon)},
\label{corollary_estimate}
\end{equation}
where $u^\varepsilon$ are the solutions of the original problem \eqref{curleq_u}, and $u^\varepsilon_{\rm hom}$ is the solution of the homogenised equation \eqref{curlhomo_u}.
\end{thm}
\begin{proof}
Throughout the proof we shall drop the superscript $\varepsilon$ in $f^\varepsilon$ for brevity. For each element of the sequence $f=f^\varepsilon\in L^2(\R^3, d\mu^\varepsilon),$ consider the $Q$-periodic function $f^\varepsilon_\theta:= \overline{e}_{\varepsilon\theta}
\mathcal{F}_\varepsilon f,$ {\it cf.} (\ref{u_theta^varepsilon}), so that
\begin{equation*}
\int_Q f^\varepsilon_\theta d\mu=\widehat{f}(\theta,\varepsilon),  \quad \theta\in \varepsilon^{-1}Q',\quad \hbox{where} \quad \widehat{f}(\theta,\varepsilon):= (2\pi)^{-3/2}\int_{\R^3}\overline{e}_\theta fd\mu^\varepsilon, \quad \theta\in \R^3.
\end{equation*}
For each $\theta\in\varepsilon^{-1}Q',$ consider the solution $u^\varepsilon_\theta$ to the problem \eqref{ft_problem} with $F=f^\varepsilon_\theta$.
Using Proposition \ref{propequivalence}, we can write the difference between the solutions $u^\varepsilon$ and $u^\varepsilon_{\rm hom}$ to \eqref{curleq_u} and \eqref{curlhomo_u}, respectively, as
\begin{align}
u^\varepsilon-u^\varepsilon_{\rm hom} &= (\mathcal{A}^\varepsilon+I)^{-1} f- (\mathcal{A}^{\rm hom} +I)^{-1} f\nonumber
\\[0.5em]
&= \mathcal{F}_\varepsilon^{-1}
e_{\varepsilon\theta} (\varepsilon^{-2} \mathcal{A}_{\varepsilon\theta}+I)^{-1} f^\varepsilon_\theta -(\mathcal{A}^{\rm hom} +I)^{-1} f
=\mathcal{F}_\varepsilon^{-1}
e_{\varepsilon\theta} u^\varepsilon_\theta -(\mathcal{A}^{\rm hom} +I)^{-1} f\nonumber
\\[0.5em]
&=\bigl(\mathcal{F}_\varepsilon^{-1}
e_{\varepsilon\theta} u^\varepsilon_\theta - \mathcal{F}_\varepsilon^{-1}
e_{\varepsilon\theta} c_\theta^\varepsilon\bigr) + \bigl(\mathcal{F}_\varepsilon^{-1}
e_{\varepsilon\theta} c_\theta^\varepsilon - (\mathcal{A}^{\rm hom} +I)^{-1} f\bigr).\label{last_line_diff}
\end{align}
For the  first term $\mathcal{F}_\varepsilon^{-1}
e_{\varepsilon\theta} u^\varepsilon_\theta - \mathcal{F}_\varepsilon^{-1}
e_{\varepsilon\theta} c_\theta^\varepsilon,$ we can use the Theorem \ref{main_thm}, since $\mathcal{F}_\varepsilon$
and the multiplication by $e_{\varepsilon\theta}$ are unitary operators. The second term in (\ref{last_line_diff}) can be written as 
\begin{align*}
&\mathcal{F}_\varepsilon^{-1}
e_{\varepsilon\theta} ({\mathfrak A}^{\hom}_\theta+I)^{-1} \widehat{f}(\theta,\varepsilon)- (2\pi)^{-3/2} \int_{\R^3} e_\theta ({\mathfrak A}^{\hom}_\theta +I)^{-1} \widehat{f}(\theta,\varepsilon) d\theta \\[0.4em]
&=(2\pi)^{-3/2} \bigg( \int_{\varepsilon^{-1}Q'} e_\theta({\mathfrak A}^{\hom}_\theta+I)^{-1} \widehat{f}(\theta,\varepsilon) d\theta - \int_{\R^3} e_\theta({\mathfrak A}^{\hom}_\theta +I)^{-1} \widehat{f}(\theta,\varepsilon)  d\theta  \bigg)\\[0.4em]
&=-(2\pi)^{-3/2} \int_{\R^3\setminus\varepsilon^{-1}Q'} e_\theta ({\mathfrak A}^{\hom}_\theta+I)^{-1} \widehat{f}(\theta,\varepsilon)d\theta.
\end{align*}
It follows that there exists $C>0$ such that
\begin{equation*}
\begin{aligned}
\bigl\|u^\varepsilon-u^\varepsilon_{\rm hom}\bigr\|_{L^2(\R^3, d\mu^\varepsilon)}&\leq C \varepsilon\|f\|_{L^2(\R^3, d\mu^\varepsilon)} + \frac{\varepsilon^2}{\Vert(A^{\rm hom})^{-1}\Vert^{-1}\pi^2+\varepsilon^2}\bigl\|\widehat{f}(\cdot,\varepsilon)\bigr\|_{L^2(\R^3)}\\[0.3em]
&=\biggl(C \varepsilon+\frac{\varepsilon^2}{\Vert(A^{\rm hom})^{-1}\Vert^{-1}\pi^2+\varepsilon^2}\biggr)\|f\|_{L^2(\R^3, d\mu^\varepsilon)},
\end{aligned}
\end{equation*}
which implies the claim.
\end{proof}

For each $\theta\in\varepsilon^{-1}Q',$ we define
\begin{equation}\label{def_Nsum}
 N_\theta:=\widehat{N}+a_\theta,\quad a_\theta\in\C^{3\times 3},\qquad \curl N_\theta:=\curl\widehat{N},
\end{equation}
where the matrix-valued function $\widehat{N}$ solves \eqref{curl_cellpb}, 
and the matrix $a_\theta$ is chosen so that 
\begin{equation}\label{condition_NK}
\int_Q \theta \times A \bigl( \theta \times
 N_\theta(\theta\times c)\bigr)
=0\qquad \forall c\in{\mathbb R}^3. 
\end{equation}
In what follows we show that such a choice is possible. We first prove an auxiliary proposition.

\begin{prop}
\label{aux_prop}
There exists a unique matrix $a_\theta \in{\mathbb R}^{3\times 3}$ such that 
\begin{equation}
a_\theta \theta=0,\qquad\ \ \  a_\theta \zeta\cdot\theta=0\quad\ \   \forall\zeta\in\Theta^\perp:=\{\zeta\in{\mathbb R}^3: \zeta\cdot\theta=0\},
\label{a_cond}
\end{equation}
and 
\begin{equation}
\label{condition_NK_eta}
\int_Q \theta \times A \bigl( \theta \times
a_\theta\zeta\bigr)
=-\int_Q \theta \times A \bigl( \theta \times
\widehat{N}\zeta\bigr)\qquad \forall\zeta\in\Theta^\perp.
\end{equation} 
\end{prop}
\begin{proof}
For any orthogonal basis $\{e_1^\perp, e_2^\perp\}$ of $\Theta^\perp,$ the identity (\ref{condition_NK_eta}) is equivalent to a linear system for the representation of the matrix $a_\theta$ in the basis $\{\theta/|\theta|, e_1^\perp, e_2^\perp\}$ of ${\mathbb R}^3.$
This system is uniquely solvable, subject to the conditions (\ref{a_cond}), for any right-hand side if and only if the only solution to the related homogeneous system is zero. The latter is easily verified, by noticing that if 
\[
\int_Q \theta \times A \bigl( \theta \times
a_\theta\zeta\bigr)=0\qquad\forall\zeta\in\Theta^\perp, 
\]
then, in particular,
\[
\biggl(\int_QA\biggr)\bigl( \theta \times
a_\theta \zeta\bigr)\cdot (\theta\times a_\theta\zeta)=0,
\]
from which we infer, due to the fact that $A$ is positive definite, that $\theta\times a_\theta\zeta=0,$ and therefore $a_\theta\zeta=0$ by the second condition in (\ref{a_cond}). Taking into account the first condition in (\ref{a_cond}), we obtain $a_\theta=0,$ as required.
\end{proof}

Furthermore, we invoke the following simple observation. 

\begin{lem}
The following characterisation for $\Theta^\perp$ holds:
\[
\Theta^\perp=\{\theta\times c: c\in{\mathbb R}^3\}.
\]
\end{lem}
\begin{proof}
The inclusion $\{\theta\times c: c\in{\mathbb R}^3\}\subset\Theta^\perp$ is trivial. The opposite inclusion follows from the observation that 
$\theta\times c=0$ if and only if $c\in{\mathbb R}^3$ is parallel to $\theta,$ equivalently orthogonal to $\Theta^\perp.$ Therefore, for each $\zeta\in \Theta^\perp,$ the problem $\theta\times c=\zeta$ has a unique solution $c\in{\mathbb R}^3,$ as required. 
\end{proof}

Using the above lemma, we write the identity (\ref{condition_NK_eta}) in an equivalent form:
\begin{equation}
\int_Q \theta \times A \bigl( \theta \times
a_\theta(\theta\times c)\bigr)
=-\int_Q \theta \times A \bigl( \theta \times
\widehat{N}(\theta\times c)\bigr)\qquad \forall c\in{\mathbb C}^3,
\label{required_id}
\end{equation}  
which is the identity \eqref{condition_NK} we require.

Notice that that $ N_\theta$ 
is bounded in $L^2(Q,d\mu),$ uniformly in $\theta.$ Indeed, from (\ref{required_id}) we have 
\[
\int_Q A\bigl|\theta \times a_\theta(\theta\times c)\bigr|^2=-\int_Q A\bigl(\theta\times \widehat{N}(\theta\times c)\bigr) \cdot\bigl(\theta\times a_\theta(\theta\times c)\bigr).
\]
Using the positive-definiteness of $A$ and the conditions \eqref{a_cond}, it follows that 
\[
\bigl\| a_\theta (\theta\times c)\bigr\|_{L^2(Q, d\mu)}\leq\bigl\| \widehat{N}(\theta\times c)\bigr\|_{L^2(Q, d\mu)}\qquad\forall c\in{\mathbb C}^3,
\]
which, combined with another use of the first equation in (\ref{a_cond}), yields a uniform estimate for $a_\theta.$ Together with (\ref{def_Nsum}) this immediately implies the claimed $L^2$-bound for $ N_\theta.$



In order to prove Theorem \ref{main_thm}, we introduce a decomposition for the vector function $u^\varepsilon_\theta$, motivated by a formal asymptotic expansion in powers of $\varepsilon.$ Before proceeding, we note that in the formulae of this section, for all $\theta\in \varepsilon^{-1}Q'$ and $c_\theta^\varepsilon$ defined by (\ref{c_theta}),
we also consider $e_{\varepsilon\theta}c_\theta^\varepsilon$ as an element of $H^1_{\curl,\varepsilon\theta},$ by setting  
\begin{equation}
\label{curl_formula}
\curl(e_{\varepsilon\theta}c_\theta^\varepsilon)={\rm i}\varepsilon e_{\varepsilon\theta}(\theta\times c_\theta^\varepsilon),
\end{equation}
which is consistent with Definitions \ref{defiH1curl} and \ref{H1kappa_def}. Now, for each $\varepsilon>0,$ $\theta\in\varepsilon^{-1}Q',$ we write
\begin{equation}\label{u=U+z}
u^\varepsilon_\theta:= U^\varepsilon_\theta+ z^\varepsilon_\theta,
\end{equation}
where
\begin{equation}
\label{U_expansion}
U^\varepsilon_\theta:= \bigl(\overline{e}_{\varepsilon\theta}\nabla (e_{\varepsilon\theta}\Psi_{\varepsilon\theta})+I\bigr)c_\theta^\varepsilon+{\rm i}\varepsilon u^{(1)}_\theta+\varepsilon^2 R^\varepsilon_\theta,\qquad u^{(1)}_\theta:=  N_\theta(\theta\times c_\theta^\varepsilon).
\end{equation}
The coefficient $R^\varepsilon_\theta$ in front of $\varepsilon^2$ in (\ref{U_expansion})
is defined to be an element of $H^1_{\curl, \varepsilon\theta}$ solving the problem
\begin{equation}
\begin{aligned}
\overline{e}_{\varepsilon\theta}&\curl\bigl(A\curl (e_{\varepsilon\theta} R^\varepsilon_\theta)\bigr)+\varepsilon^2\int_Q R^\varepsilon_\theta\; d\mu+ \varepsilon^2 \overline{e}_{\varepsilon\theta}\nabla(e_{\varepsilon\theta} \Phi_{R^\varepsilon_\theta})\\[0.1em]
&=F-\varepsilon^{-2}\overline{e}_{\varepsilon\theta}\curl\bigl(A\curl (e_{\varepsilon\theta} c_\theta^\varepsilon)\bigr)
-{\rm i}\varepsilon^{-1}\overline{e}_{\varepsilon\theta}\curl\bigl(A\curl\bigl(e_{\varepsilon\theta} u^{(1)}_\theta
\bigr)\bigr)-\bigl(\overline{e}_{\varepsilon\theta}\nabla (e_{\varepsilon\theta}\Psi_{\varepsilon\theta})+I\bigr)c_\theta^\varepsilon 
\\[0.6em]
&=F+\theta\times A(\theta\times c_\theta^\varepsilon)+\theta\times A\curl\bigl(N_\theta(\theta\times c_\theta^\varepsilon)\bigr)
\\[0.4em]
&
\qquad\qquad+\curl\bigl(A\bigl(\theta\times u^{(1)}_\theta
\bigr) \bigr)
+{\rm i}\varepsilon\theta\times A\bigl(\theta \times u^{(1)}_\theta
\bigr)-\bigl(\overline{e}_{\varepsilon\theta}\nabla (e_{\varepsilon\theta}\Psi_{\varepsilon\theta})+I\bigr)c_\theta^\varepsilon
=:\mathcal{H}^\varepsilon_\theta,
\end{aligned}
\label{eq_R}
\end{equation}
where $(e_{\varepsilon\theta}\Phi_{R^\varepsilon_\theta}, \nabla(e_{\varepsilon\theta}\Phi_{R^\varepsilon_\theta}))\in H^1_{\varepsilon\theta}$ is defined from $R^\varepsilon_\theta$ by Proposition \ref{prop_eqPhi},
and the expression $\mathcal{H}^\varepsilon_\theta$ is treated as an element of the dual space $(H^1_{\curl})^*.$ In what follows, the value of the functional $\mathcal{H}^\varepsilon_\theta$ on $\phi\in H^1_{\curl}$ (recalling that we drop the mention of the second component $\curl\phi$) is denoted by $\langle\mathcal{H}^\varepsilon_\theta, \phi\rangle.$ 
The problem (\ref{eq_R}) is understood in the sense of the integral identity
\begin{equation}
\begin{aligned}
\int_Q A \curl(e_{\varepsilon\theta} R^\varepsilon_\theta)\cdot  {\curl (e_{\varepsilon\theta} \varphi)}&+\varepsilon^2 \int_Q R^\varepsilon_\theta\cdot {\int_Q \varphi}\\[0.5em]
&+\varepsilon^2 \int_Q {\overline{e}_{\varepsilon\theta}}\nabla \bigl(e_{\varepsilon\theta} \Phi_{R^\varepsilon_\theta}\bigr)\cdot {\varphi}= \langle \mathcal{H}^\varepsilon_\theta, \phi \rangle \qquad \forall\varphi\in[C_\#^\infty]^3,
\end{aligned}
\label{eq_R_id}
\end{equation}
where, in accordance with the above convention, $[C_\#^\infty]^3$ is treated as a subset of $H^1_{\curl}.$ The second equality in \eqref{eq_R} is verified by taking $\varphi\in [C_\#^\infty]^3$ and noticing that
\begin{align*}
\Bigl\langle F&-\varepsilon^{-2}\overline{e}_{\varepsilon\theta}\curl\bigl(A\curl (e_{\varepsilon\theta} c_\theta^\varepsilon)\bigr)
-{\rm i}\varepsilon^{-1}\overline{e}_{\varepsilon\theta}\curl\bigl(A\curl\bigl(e_{\varepsilon\theta} u^{(1)}_\theta\bigr)\bigr)-\bigl(\overline{e}_{\varepsilon\theta}\nabla (e_{\varepsilon\theta}\Psi_{\varepsilon\theta})+I\bigr)c_\theta^\varepsilon, \varphi\Bigr\rangle \\[0.7em]
&=\int_Q F\cdot {\varphi} -\int_Q A \curl\bigl(N_\theta({\rm i}\theta\times c_\theta^\varepsilon)\bigr)\cdot {({\rm i}\theta\times\varphi)} -\int_Q A ({\rm i}\theta\times c_\theta^\varepsilon)\cdot {({\rm i}\theta\times\varphi)} \\[0.4em]
&-{\rm i}\int_Q A\bigl({\rm i}\theta\times u^{(1)}_\theta\bigr)\cdot {\curl\varphi}
-{\rm i}\varepsilon \int_Q A\bigl({\rm i}\theta\times u^{(1)}_\theta\bigr)\cdot {({\rm i}\theta\times\varphi)}- {\int_Q\bigl(\overline{e}_{\varepsilon\theta}\nabla (e_{\varepsilon\theta}\Psi_{\varepsilon\theta})+I\bigr)c_\theta^\varepsilon\cdot\varphi},
\end{align*}
where we use the fact that, due to the second equality in (\ref{def_Nsum}) and by virtue of $\widehat{N}$ solving \eqref{curl_cellpb}, the function $N_\theta$ satisfies the equation
\[
\curl(A\curl N_\theta)=-\curl A,
\]
understood in the sense of the integral identity (\ref{weak_curlcellpb}) with $\widehat{N}$ replaced by $N_\theta.$
\begin{prop}\label{prop_existR}
For each $\varepsilon>0$ and $\theta \in \varepsilon^{-1} Q',$ there exists a unique solution $R^\varepsilon_\theta\in H^1_{\curl}$ to the problem \eqref{eq_R}. 
\end{prop}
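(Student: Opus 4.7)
I would cast \eqref{eq_R} as a variational problem on $H^1_\curl(Q,d\mu)$ and invoke the Lax--Milgram lemma. Pairing both sides of \eqref{eq_R} against $\phi\in H^1_\curl(Q,d\mu)$, integrating the first term by parts, and using the three-way orthogonality of the components of the Helmholtz decomposition in Proposition \ref{prop_eqPhi} to rewrite the third term as a pairing of pseudo-gradient parts, one arrives at the problem of finding $R\in H^1_\curl(Q,d\mu)$ such that
\begin{align*}
\mathcal{B}(R,\phi):={}&\int_Q A\curl(e_{\varepsilon\theta}R)\cdot\overline{\curl(e_{\varepsilon\theta}\phi)}\,d\mu
+\varepsilon^2\int_Q R\,d\mu\cdot\overline{\int_Q\phi\,d\mu}\\
&+\varepsilon^2\int_Q\overline{e_{\varepsilon\theta}}\nabla(e_{\varepsilon\theta}\Phi_R)\cdot\overline{\overline{e_{\varepsilon\theta}}\nabla(e_{\varepsilon\theta}\Phi_\phi)}\,d\mu=\bigl\langle\mathcal{H}^\varepsilon_\theta,\phi\bigr\rangle,
\end{align*}
for every $\phi\in H^1_\curl(Q,d\mu)$.

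\textbf{Continuity.} Boundedness of $\mathcal{B}$ on $H^1_\curl(Q,d\mu)$ follows from the $L^\infty(Q,d\mu)$ bound on $A$, the Cauchy--Schwarz inequality, and the standard energy estimate $\|\nabla(e_{\varepsilon\theta}\Phi_u)\|_{L^2(Q,d\mu)}\le\|u\|_{L^2(Q,d\mu)}$, obtained by testing \eqref{eq_Phi_weak} with $\phi=\Phi_u$. Continuity of the linear functional $\phi\mapsto\bigl\langle\mathcal{H}^\varepsilon_\theta,\phi\bigr\rangle$ is immediate from its explicit form, displayed in the verification of the second equality in \eqref{eq_R}: each summand is controlled by a constant depending on $\varepsilon,\theta,F,A,N,c_\theta$ times $\|\phi\|_{L^2}$ or $\|\curl\phi\|_{L^2}$.

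\textbf{Coercivity.} This is the heart of the argument. For $R\in H^1_\curl(Q,d\mu)$, apply the decomposition \eqref{decomposition}, $R=\widetilde R+\int_Q R\,d\mu+\overline{e_{\varepsilon\theta}}\nabla(e_{\varepsilon\theta}\Phi_R)$, which is mutually orthogonal in $L^2(Q,d\mu)$. Since $\curl$ annihilates the last summand,
\[
\curl(e_{\varepsilon\theta}R)=\curl(e_{\varepsilon\theta}\widetilde R)+e_{\varepsilon\theta}\bigl(i\varepsilon\theta\times\textstyle\int_Q R\,d\mu\bigr),
\]
and the two summands on the right are orthogonal in $L^2(Q,d\mu;\mathbb{C}^3)$ by exactly the computation carried out in the proof of Proposition \ref{poincare_prop}. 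Combining the ellipticity of $A$ with this orthogonality and the Poincar\'e inequality \eqref{curlpoincare} applied to $\widetilde R$, the first summand of $\mathcal{B}(R,R)$ controls $\|\widetilde R\|^2_{L^2}+\|\curl(e_{\varepsilon\theta}\widetilde R)\|^2_{L^2}+\bigl|\varepsilon\theta\times\int_Q R\bigr|^2$, while the second and third summands of $\mathcal{B}(R,R)$ control $\bigl|\int_Q R\bigr|^2$ and $\|\overline{e_{\varepsilon\theta}}\nabla(e_{\varepsilon\theta}\Phi_R)\|^2_{L^2}$ respectively. Adding these estimates yields $\mathcal{B}(R,R)\ge c(\varepsilon,\theta)\|R\|^2_{H^1_\curl(Q,d\mu)}$ with $c(\varepsilon,\theta)>0$, which is all that is required for fixed $\varepsilon$ and $\theta$.

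\textbf{Conclusion and main obstacle.} The Lax--Milgram lemma then produces a unique $R^\varepsilon_\theta\in H^1_\curl(Q,d\mu)$. The hard part is the coercivity estimate: one must recognise that the two lower-order terms specifically added in \eqref{eq_R} exactly compensate the two kernel directions---constants and $\varepsilon\theta$-pseudogradients---of the otherwise degenerate differential expression $\overline{e_{\varepsilon\theta}}\curl\bigl(A\curl(e_{\varepsilon\theta}\,\cdot)\bigr)$, and that the Helmholtz decomposition diagonalises the augmented operator so that all three parts of $R$ are simultaneously controlled. The remaining ingredients---continuity of $\mathcal{B}$ and of the data functional---are routine.
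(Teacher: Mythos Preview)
Your argument is correct and follows essentially the same route as the paper: cast \eqref{eq_R} weakly, rewrite the third term via the Helmholtz orthogonality to obtain exactly the bilinear form the paper writes down, and apply Lax--Milgram with coercivity coming from the Poincar\'e-type inequality \eqref{curlpoincare}. Your treatment of coercivity is in fact more detailed than the paper's one-line appeal to \eqref{curlpoincare}; the only cosmetic slip is that \eqref{curlpoincare} should be invoked for $u=R$ (which bounds $\widetilde R$ directly) rather than ``applied to $\widetilde R$''.
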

\begin{proof}
We use the Helmholtz decomposition \eqref{decomposition} for the test function $\varphi$ in (\ref{eq_R_id}). As the vectors 
\[
{\overline{e}_{\varepsilon\theta}}\nabla\bigl(e_{\varepsilon\theta} \Phi_{R^\varepsilon_\theta}\bigr),\qquad \widetilde{\varphi}+\int_Q \varphi
\] 
are orthogonal in $L^2(Q, d\mu),$ the third term on the left-hand side of (\ref{eq_R_id}) can be written as 
\begin{align*}
\int_Q {\overline{e}_{\varepsilon\theta}}\nabla\bigl(e_{\varepsilon\theta}\Phi_{R^\varepsilon_\theta}\bigr)\cdot {\varphi}&=
\int_Q {\overline{e}_{\varepsilon\theta}}\nabla\bigl(e_{\varepsilon\theta}\Phi_{R^\varepsilon_\theta}\bigr)\cdot {\biggl(\widetilde{\varphi}+\int_Q \varphi+ {\overline{e}_{\varepsilon\theta}}\nabla(e_{\varepsilon\theta} \Phi_\varphi)\biggr)} =
\int_Q\nabla\bigl(e_{\varepsilon\theta}\Phi_{R^\varepsilon_\theta}\bigr)\cdot  {\nabla (e_{\varepsilon\theta}\Phi_\varphi)}.
\end{align*}
Hence the claim follows from the Lax-Millgram Theorem applied to the bilinear form
\begin{equation*}
b_{\varepsilon\theta}(u,v)=\int_Q A \curl(e_{\varepsilon\theta} u)\cdot  {\curl(e_{\varepsilon\theta} v)} +\varepsilon^2\int_Q u \cdot {\int_Q v}+ \varepsilon^2 \int_Q \nabla(e_{\varepsilon\theta} \Phi_u)\cdot  {\nabla(e_{\varepsilon\theta} \Phi_v)},\qquad u,v \in H^1_{\curl},
\end{equation*}
where $\Phi_u$, $\Phi_v$ are defined as in \eqref{eq_Phi}. Indeed the form is bounded and the coercivity is a consequence of the Poincar\'{e} inequality \eqref{curlpoincare}.
\end{proof}

In order to prove the estimates for $R^\varepsilon_\theta$ claimed in Theorem \ref{thm_estimate_R} below, we will use the Poincar\'{e} inequality \eqref{curlpoincare}. To this end, we notice that 
\begin{equation}
\bigl\langle \mathcal{H}^\varepsilon_\theta, R^\varepsilon_\theta\bigr\rangle=\bigl\langle \mathcal{H}^\varepsilon_\theta, \widetilde{R}^\varepsilon_\theta\bigr\rangle,
\label{twoH}
\end{equation}
where $\widetilde{R}^\varepsilon_\theta$ is defined as in the decomposition \eqref{decomposition}. The equation (\ref{twoH}) follows from the following two properties of $\mathcal{H}^\varepsilon_\theta$. First, we observe that by the definition of $\mathcal{H}^\varepsilon_\theta$, see the expression in the middle line of (\ref{eq_R}), one has
\begin{equation*}
\bigl\langle \mathcal{H}^\varepsilon_\theta, \overline{e}_{\varepsilon\theta}\nabla(e_{\varepsilon\theta}\varphi)\bigr\rangle=0\qquad \forall\varphi\in H^1_\#,
\end{equation*}
in view of the equation (\ref{Psi_equation}) for $\Psi_{\varepsilon\theta}$ and since $\overline{e}_{\varepsilon\theta}\divv(e_{\varepsilon\theta} F)=0$.
In particular,
\begin{equation}
\bigl\langle \mathcal{H}^\varepsilon_\theta, \overline{e}_{\varepsilon\theta}\nabla (e_{\varepsilon\theta} \Phi_u)\bigr\rangle=0
\label{H_div_free}
\end{equation}
for all functions $\Phi_u$ that solve \eqref{eq_Phi} for some $u\in H^1_\#.$ Furthermore, the functional $\mathcal{H}^\varepsilon_\theta$ vanishes on constant vector functions:
\begin{equation}
\label{H_const_free}
\bigl\langle \mathcal{H}^\varepsilon_\theta, d\bigr\rangle=0 \qquad \forall d\in \C^3.
\end{equation}
This is a consequence of the equation \eqref{homo_problem_ft} solved by $c_\theta^\varepsilon,$ taking into account the condition \eqref{condition_NK}.



\section{Proof of Theorem \ref{main_thm}: asymptotic estimate for $R_\theta^\varepsilon$ as $\varepsilon\to0$}

\label{proof_sec}



\begin{thm}
\label{thm_estimate_R}
There exists $C>0$ such that for all $\varepsilon>0,$ $\theta\in\varepsilon^{-1}Q',$ the solution $R^\varepsilon_\theta$ to the problem \eqref{eq_R} satisfies the estimates
\begin{align}
& \bigg\|R^\varepsilon_\theta-\int_Q R^\varepsilon_\theta-\overline{e}_{\varepsilon\theta}\nabla\bigl(e_{\varepsilon\theta} \Phi_{R^\varepsilon_\theta}\bigr)+\int_Q\overline{e}_{\varepsilon\theta}\nabla\bigl(e_{\varepsilon\theta} \Phi_{R^\varepsilon_\theta}\bigr)\bigg\|_{L^2(Q, d\mu)}\leq C\|F\|_{L^2(Q, d\mu)} \label{estimate_R1},\\[0.4em]
&\bigg\| \int_Q R^\varepsilon_\theta+\overline{e}_{\varepsilon\theta}\nabla\bigl(e_{\varepsilon\theta} \Phi_{R^\varepsilon_\theta}\bigr)-\int_Q\overline{e}_{\varepsilon\theta}\nabla\bigl(e_{\varepsilon\theta} \Phi_{R^\varepsilon_\theta}\bigr)\bigg\|_{L^2(Q, d\mu)}\leq C\varepsilon^{-1} \|F\|_{L^2(Q, d\mu)}.\label{estimate_R2}
\end{align}
\end{thm}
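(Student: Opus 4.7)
The plan is to test the weak formulation of \eqref{eq_R} with $\varphi=R^\varepsilon_\theta$ itself and exploit the Helmholtz decomposition \eqref{decomposition} of $R^\varepsilon_\theta$ together with the orthogonality properties \eqref{H_div_free}, \eqref{H_const_free} of $\mathcal{H}^\varepsilon_\theta$ in order to reduce the right-hand side to the pairing with the divergence-free zero-mean part $\widetilde{R^\varepsilon_\theta}$. This produces the energy identity
\[
\int_Q A\bigl\vert\curl(e_{\varepsilon\theta}R^\varepsilon_\theta)\bigr\vert^2d\mu+\varepsilon^2\biggl\vert\int_Q R^\varepsilon_\theta\,d\mu\biggr\vert^2+\varepsilon^2\bigl\|\nabla(e_{\varepsilon\theta}\Phi_{R^\varepsilon_\theta})\bigr\|_{L^2(Q,d\mu)}^2=\bigl\langle\mathcal{H}^\varepsilon_\theta,\widetilde{R^\varepsilon_\theta}\bigr\rangle,
\]
whose left-hand side, by the uniform lower bound on $A$, controls precisely the three quantities appearing in \eqref{estimate_R1}--\eqref{estimate_R2}.

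The core task is then to establish
\[
\bigl|\langle\mathcal{H}^\varepsilon_\theta,\widetilde{R^\varepsilon_\theta}\rangle\bigr|\leq C\|F\|_{L^2(Q,d\mu)}\bigl(\|\widetilde{R^\varepsilon_\theta}\|_{L^2(Q,d\mu)}+\|\curl\widetilde{R^\varepsilon_\theta}\|_{L^2(Q,d\mu)}\bigr)
\]
uniformly in $\varepsilon$ and $\theta$, by pairing $\widetilde{R^\varepsilon_\theta}$ term by term against the explicit expression on the right-hand side of \eqref{eq_R}. Three uniform inputs make this work: (i) the bound $(1+|\theta|^2)|c_\theta|\leq C\|F\|_{L^2}$, extracted from the homogenised problem \eqref{homo_problem_ft} using the positive definiteness of the effective quadratic form on $\Theta^\perp$; (ii) the cell-problem bounds $\|u^{(1)}_\theta\|_{L^2}+\|\curl u^{(1)}_\theta\|_{L^2}\leq C|\theta||c_\theta|$, which rest on the uniform-in-$\theta$ estimates for $\widetilde{N}$ and $a_\theta$ already established; and (iii) the restriction $\varepsilon|\theta|\leq\pi$ implied by $\theta\in\varepsilon^{-1}Q'$, needed in particular to absorb the apparently $O(\varepsilon^{-1})$ contribution $\varepsilon\theta\times A(i\theta\times u^{(1)}_\theta)$ through the chain $\varepsilon|\theta|^3|c_\theta|=(\varepsilon|\theta|)\cdot|\theta|^2|c_\theta|\leq\pi C\|F\|_{L^2}$.

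Next I would apply the Poincar\'{e}-type inequality \eqref{curlpoincare} to $u=R^\varepsilon_\theta$ to get $\|\widetilde{R^\varepsilon_\theta}\|_{L^2}\leq C_{\mathrm P}\|\curl(e_{\varepsilon\theta}R^\varepsilon_\theta)\|_{L^2}$. To bring $\|\curl\widetilde{R^\varepsilon_\theta}\|_{L^2}$ into the same form, I would use the identity $\curl\widetilde{R^\varepsilon_\theta}=\overline{e_{\varepsilon\theta}}\curl(e_{\varepsilon\theta}\widetilde{R^\varepsilon_\theta})-i\varepsilon\theta\times\widetilde{R^\varepsilon_\theta}$ together with the decomposition $\curl(e_{\varepsilon\theta}R^\varepsilon_\theta)=\curl(e_{\varepsilon\theta}\widetilde{R^\varepsilon_\theta})+e_{\varepsilon\theta}\bigl(i\varepsilon\theta\times\int_Q R^\varepsilon_\theta\,d\mu\bigr)$ into two $L^2(Q,d\mu)$-orthogonal summands; the cross term vanishes because $\int_Q\widetilde{R^\varepsilon_\theta}\,d\mu=0$ and $\int_Q\curl\widetilde{R^\varepsilon_\theta}\,d\mu=0$, the latter following from \eqref{grad_mean_zero} by approximation with smooth functions. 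Combining with $\varepsilon|\theta|\leq\pi$ and the Poincar\'{e} bound just derived yields $\|\curl\widetilde{R^\varepsilon_\theta}\|_{L^2}\leq C\|\curl(e_{\varepsilon\theta}R^\varepsilon_\theta)\|_{L^2}$. Inserting both estimates into the energy identity and absorbing via Young's inequality then produces $\|\curl(e_{\varepsilon\theta}R^\varepsilon_\theta)\|_{L^2}\leq C\|F\|_{L^2}$ and $\varepsilon^2\bigl(\bigl|\int_Q R^\varepsilon_\theta\,d\mu\bigr|^2+\|\nabla(e_{\varepsilon\theta}\Phi_{R^\varepsilon_\theta})\|_{L^2}^2\bigr)\leq C\|F\|_{L^2}^2$, from which \eqref{estimate_R1} follows by a further application of \eqref{curlpoincare}, and \eqref{estimate_R2} by the $L^2(Q,d\mu)$-orthogonality of its two summands (the gradient part has zero $\mu$-mean by \eqref{Phi_mean}).

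The main obstacle I expect is the bookkeeping in the middle step: verifying that each of the roughly half-dozen terms generated by $\mathcal{H}^\varepsilon_\theta$ is either absorbed into a factor of $|c_\theta|$ via $(1+|\theta|^2)|c_\theta|\lesssim\|F\|_{L^2}$ or tamed by an explicit factor of $\varepsilon$ combining with $\varepsilon|\theta|\leq\pi$ to remain $O(1)$. A secondary but essential technical point is the reduction of $\|\curl\widetilde{R^\varepsilon_\theta}\|_{L^2}$ to $\|\curl(e_{\varepsilon\theta}R^\varepsilon_\theta)\|_{L^2}$, which crucially uses assumption \eqref{grad_mean_zero} through the identity $\int_Q\curl\widetilde{R^\varepsilon_\theta}\,d\mu=0$ and the fact that $\widetilde{R^\varepsilon_\theta}$ itself has zero $\mu$-mean.
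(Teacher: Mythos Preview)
Your argument is correct, and it is in fact more direct than the route taken in the paper. You both arrive at the same energy identity \eqref{eq_R_with_rhs}, but the paper then introduces an auxiliary function $\xi^\varepsilon_\theta$ solving $\overline{e_{\varepsilon\theta}}\curl\bigl(A\curl(e_{\varepsilon\theta}\xi^\varepsilon_\theta)\bigr)+\varepsilon^2\int_Q\xi^\varepsilon_\theta+\varepsilon^2\overline{e_{\varepsilon\theta}}\nabla(e_{\varepsilon\theta}\Phi_{\xi^\varepsilon_\theta})=\overline{e_{\varepsilon\theta}}\curl\bigl(e_{\varepsilon\theta}A(\theta\times u^{(1)}_\theta)\bigr)$ in order to rewrite the troublesome term $\int_Q e_{\varepsilon\theta}A(\theta\times u^{(1)}_\theta)\cdot\overline{\curl(e_{\varepsilon\theta}\widetilde{R^\varepsilon_\theta})}$ as $\overline{\langle\mathcal{H}^\varepsilon_\theta,\widetilde{\xi^\varepsilon_\theta}\rangle}$ and then bound the latter in a separate lemma. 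You bypass this duality construction entirely by bounding every constituent of $\mathcal{H}^\varepsilon_\theta$ directly, and the mechanism that makes this work---the a priori bound $(1+|\theta|^2)|c_\theta|\le C\|F\|_{L^2}$ coming from the coercivity of $\mathfrak{A}^{\rm hom}_\theta$ on $\Theta^\perp$, combined with $\varepsilon|\theta|\le\pi$---is in any case used implicitly in the paper's proofs of \eqref{xi_estimate} and Lemma~\ref{est_lemma}. So you lose nothing and save the auxiliary problem.

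One small simplification: your detour through $\|\curl\widetilde{R^\varepsilon_\theta}\|$ is unnecessary. If you expand $\langle\mathcal{H}^\varepsilon_\theta,\widetilde{R^\varepsilon_\theta}\rangle$ using the identity \eqref{triangle} as the paper does to reach \eqref{eq_R_with_rhs}, the curl contribution already appears in the form $\int_Q e_{\varepsilon\theta}A(\theta\times u^{(1)}_\theta)\cdot\overline{\curl(e_{\varepsilon\theta}\widetilde{R^\varepsilon_\theta})}$, and the orthogonality established in the proof of Proposition~\ref{poincare_prop} gives $\|\curl(e_{\varepsilon\theta}\widetilde{R^\varepsilon_\theta})\|_{L^2}\le\|\curl(e_{\varepsilon\theta}R^\varepsilon_\theta)\|_{L^2}$ directly, without the intermediate passage through $\curl\widetilde{R^\varepsilon_\theta}$ and the claim $\int_Q\curl\widetilde{R^\varepsilon_\theta}\,d\mu=0$.
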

\begin{proof}
Suppose that $\phi_n\in[C_\#^\infty]^3$ converging to $R^\varepsilon_\theta$ in $L^2(Q, d\mu)$ are such that 
\[
\curl (e_{\varepsilon\theta} \phi_n)\stackrel{n\to\infty}{\longrightarrow}\curl (e_{\varepsilon\theta} R^\varepsilon_\theta)\quad {\rm in}\ \ L^2(Q, d\mu)
\] 
and use $\phi_n$ as test functions in the integral identity for \eqref{eq_R}:
\begin{equation}\label{weak_eqRvarphi}
\int_Q A \curl (e_{\varepsilon\theta} R^\varepsilon_\theta)\cdot {\curl(e_{\varepsilon\theta} \phi_n)}+\varepsilon^2 \int_Q R^\varepsilon_\theta\cdot {\int_Q\phi_n}+\varepsilon^2\int_Q \overline{e}_{\varepsilon\theta}\nabla\bigl(e_{\varepsilon\theta}\Phi_{R^\varepsilon_\theta}\bigr)\cdot {\phi_n}=\bigl\langle \mathcal{H}^\varepsilon_\theta, \phi_n\bigr\rangle.
\end{equation}
Invoking the 
the properties \eqref{H_div_free} and \eqref{H_const_free},
we write the right-hand side of (\ref{weak_eqRvarphi}) as follows:
\begin{equation*}
\bigl\langle \mathcal{H}^\varepsilon_\theta, \phi_n\bigr\rangle=\biggr\langle \mathcal{H}^\varepsilon_\theta, \phi_n - \int_Q R^\varepsilon_\theta -\overline{e}_{\varepsilon\theta}\nabla\bigl(e_{\varepsilon\theta} \Phi_{R^\varepsilon_\theta}\bigr)+\int_Q\overline{e}_{\varepsilon\theta}\nabla\bigl(e_{\varepsilon\theta} \Phi_{R^\varepsilon_\theta}\bigr)\biggr\rangle.
\end{equation*}
Furthermore, using the identity ({\it cf.} (\ref{curl_formula}))
\begin{equation}
\begin{aligned}
&\curl \bigg(\phi_n- \int_Q R^\varepsilon_\theta -\overline{e}_{\varepsilon\theta}\nabla(e_{\varepsilon\theta} \Phi_{R^\varepsilon_\theta})+\int_Q\overline{e}_{\varepsilon\theta}\nabla\bigl(e_{\varepsilon\theta} \Phi_{R^\varepsilon_\theta}\bigr)\bigg) \\[0.4em]
&\qquad\qquad=\overline{e}_{\varepsilon\theta}\bigg\{\curl \bigg(e_{\varepsilon\theta}\bigg(\phi_n- \int_Q R^\varepsilon_\theta -\overline{e}_{\varepsilon\theta}\nabla\bigl(e_{\varepsilon\theta} \Phi_{R^\varepsilon_\theta}\bigr)+\int_Q\overline{e}_{\varepsilon\theta}\nabla\bigl(e_{\varepsilon\theta} \Phi_{R^\varepsilon_\theta}\bigr)\biggr)\bigg)\\[0.4em]
&\qquad\qquad-{\rm i}\varepsilon e_{\varepsilon\theta}\theta
\times\bigg(\phi_n- \int_Q R^\varepsilon_\theta -\overline{e}_{\varepsilon\theta}\nabla\bigl(e_{\varepsilon\theta} \Phi_{R^\varepsilon_\theta}\bigr)+\int_Q\overline{e}_{\varepsilon\theta}\nabla\bigl(e_{\varepsilon\theta} \Phi_{R^\varepsilon_\theta}\bigr)\bigg)\bigg\},
\end{aligned}
\label{triangle}
\end{equation}
we rewrite \eqref{weak_eqRvarphi} as
\begin{align*}
&\int_Q A \curl (e_{\varepsilon\theta}R^\varepsilon_\theta)\cdot {\curl(e_{\varepsilon\theta} \phi_n)}+\varepsilon^2 \int_Q R^\varepsilon_\theta\cdot {\int_Q\phi_n}+\varepsilon^2\int_Q \overline{e}_{\varepsilon\theta}\nabla\bigl(e_{\varepsilon\theta}\Phi_{R^\varepsilon_\theta}\bigr)\cdot {\phi_n} \\[0.4em]
&=\int_Q \Big(F+\theta\times A(\theta\times c_\theta^\varepsilon) + \theta\times A\bigl(\curl N_\theta(\theta\times c_\theta^\varepsilon)\bigr)-\bigl(\overline{e}_{\varepsilon\theta}\nabla (e_{\varepsilon\theta}\Psi_{\varepsilon\theta})+I\bigr)c_\theta^\varepsilon\Big)\\[0.4em]
&\qquad\qquad\qquad\qquad\qquad\qquad\cdot{\bigg(\phi_n- \int_Q R^\varepsilon_\theta -\overline{e}_{\varepsilon\theta}\nabla\bigl(e_{\varepsilon\theta} \Phi_{R^\varepsilon_\theta}\bigr)+\int_Q\overline{e}_{\varepsilon\theta}\nabla\bigl(e_{\varepsilon\theta} \Phi_{R^\varepsilon_\theta}\bigr)\bigg)} \notag \\[0.4em]
&+\int_Q e_{\varepsilon\theta} A\bigl(\theta\times u^{(1)}_\theta\bigr)\cdot {\curl \bigg(e_{\varepsilon\theta} \bigg(\phi_n- \int_Q R^\varepsilon_\theta -\overline{e}_{\varepsilon\theta}\nabla\bigl(e_{\varepsilon\theta} \Phi_{R^\varepsilon_\theta}\bigr)+\int_Q\overline{e}_{\varepsilon\theta}\nabla\bigl(e_{\varepsilon\theta} \Phi_{R^\varepsilon_\theta}\bigr)\bigg)\bigg)}\notag
\end{align*}
In the last identity we pass to the limit as $n\to\infty$ and use the assumptions made about the convergence of the sequence $\{\phi_n\}.$ Applying the decomposition \eqref{decomposition} to the function $R^\varepsilon_\theta$, due to the property \eqref{mean_utilde_Phi}, 
the second term on the left-hand side of the resulting equality is
\begin{equation*}
\int_Q R^\varepsilon_\theta\cdot {\int_Q R^\varepsilon_\theta}=\bigg| \int R^\varepsilon_\theta \bigg|^2.
\end{equation*}
Furthermore, due to the orthogonality between 
\[
{\overline{e}_{\varepsilon\theta}}\nabla\bigl(e_{\varepsilon\theta} \Phi_{R^\varepsilon_\theta}\bigr),\qquad \widetilde{R}^\varepsilon_\theta+\int_Q R^\varepsilon_\theta,
\]
see Section \ref{Helmholtz_section}, the third term on the left-hand side is
\begin{align*}
\int_Q \overline{e}_{\varepsilon\theta}\nabla\bigl(e_{\varepsilon\theta}\Phi_{R^\varepsilon_\theta}\bigr)\cdot {R^\varepsilon_\theta}&=
\int_Q\bigl|\nabla \bigl(e_{\varepsilon\theta}\Phi_{R^\varepsilon_\theta}\bigr)\bigr|^2.
\end{align*}
Hence, we obtain
\begin{equation}
\label{eq_R_with_rhs}
\begin{aligned}
&\int_Q A \curl (e_{\varepsilon\theta} R^\varepsilon_\theta)\cdot {\curl(e_{\varepsilon\theta} R^\varepsilon_\theta )}+\varepsilon^2 \bigg| \int_QR^\varepsilon_\theta \bigg|^2+\varepsilon^2\int_Q\bigl|\nabla \bigl(e_{\varepsilon\theta}\Phi_{R^\varepsilon_\theta}\bigr)\bigr|^2 \\[0.3em]
&=\int_Q \Big(F+\theta\times A(\theta\times c_\theta^\varepsilon) + \theta\times A\curl\bigl(N_\theta(\theta\times c_\theta^\varepsilon)\bigr)\\[0.3em]
&\hspace{2in}-\bigl(\overline{e}_{\varepsilon\theta}\nabla (e_{\varepsilon\theta}\Psi_{\varepsilon\theta})+I\bigr)c_\theta^\varepsilon
\Big)\cdot 
 {\biggl({\widetilde{R}^\varepsilon_\theta}+\int_Q\overline{e}_{\varepsilon\theta}\nabla\bigl(e_{\varepsilon\theta} \Phi_{R^\varepsilon_\theta}\bigr)
\biggr)}\\[0.3em]
&+\int_Q e_{\varepsilon\theta} A\bigl(\theta\times u^{(1)}_\theta\bigr)\cdot {\curl \biggl(e_{\varepsilon\theta}\biggl({\widetilde{R}^\varepsilon_\theta}+\int_Q\overline{e}_{\varepsilon\theta}\nabla\bigl(e_{\varepsilon\theta} \Phi_{R^\varepsilon_\theta}\bigr)\biggr)
\bigg)},
\end{aligned}
\end{equation}
where $\widetilde{R}^\varepsilon_\theta$ is the first term in the Helmholtz decomposition (\ref{decomposition}) for $R^\varepsilon_\theta.$ Note that the last term on the right-hand side of (\ref{eq_R_with_rhs}) equals
\begin{equation}
\int_Q e_{\varepsilon\theta} A\bigl(\theta\times u^{(1)}_\theta\bigr)\cdot {\curl \bigl(e_{\varepsilon\theta}{\widetilde{R}^\varepsilon_\theta}\big)},
\label{auxR}
\end{equation}
due to the condition (\ref{condition_NK}).

In order to study the expression (\ref{auxR}), for each $\varepsilon>0,$ $\theta\in\varepsilon^{-1}Q',$ consider $\xi^\varepsilon_\theta\in H^1_{\curl}$ that solves 
\begin{equation}
\label{eq_xi}
\overline{e}_{\varepsilon\theta}\curl\bigl(A\curl(e_{\varepsilon\theta} \xi^\varepsilon_\theta)\bigr)+\varepsilon^2\int_Q \xi^\varepsilon_\theta+\varepsilon^2 \overline{e}_{\varepsilon\theta}\nabla\bigl(e_{\varepsilon\theta} \Phi_{\xi^\varepsilon_\theta}\bigr)=\overline{e}_{\varepsilon\theta}\curl\bigl(e_{\varepsilon\theta} A \bigl(\theta\times u^{(1)}_\theta\bigr)\bigr).
\end{equation}
The existence and uniqueness of such $\xi^\varepsilon_\theta$
follow from the same argument as the one used in Proposition \ref{prop_existR}.
Furthermore, using $\xi^\varepsilon_\theta$ as a test function in the integral identity for \eqref{eq_xi}, we obtain the uniform estimate
\begin{equation}\label{xi_estimate}
\bigl\| \curl(e_{\varepsilon\theta} \xi^\varepsilon_\theta)\bigr\|_{L^2(Q,d\mu)}\leq C \|F\|_{L^2(Q,d\mu)}.
\end{equation}

Next, testing  \eqref{eq_xi} with 
\[
{\widetilde{R}^\varepsilon_\theta}=R^\varepsilon_\theta- \int_Q R^\varepsilon_\theta -\overline{e}_{\varepsilon\theta}\nabla(e_{\varepsilon\theta} \Phi_{R^\varepsilon_\theta}),
\] 
we write the last term in \eqref{eq_R_with_rhs} as
\begin{equation}
\begin{aligned}
\int_Q e_{\varepsilon\theta} A\bigl(\theta\times u^{(1)}_\theta\bigr)\cdot {\curl \big(e_{\varepsilon\theta}{\widetilde{R}^\varepsilon_\theta}
\big)}&=\int_Q A\curl (e_{\varepsilon\theta} \xi^\varepsilon_\theta)\cdot {\curl \big(e_{\varepsilon\theta}{\widetilde{R}^\varepsilon_\theta}
\big)} 
\\[0.4em]
&
+\varepsilon^2 \int_Q \xi^\varepsilon_\theta\cdot {\int_Q{\widetilde{R}^\varepsilon_\theta}
 }
+\varepsilon^2 \int_Q \overline{e}_{\varepsilon\theta}\nabla\bigl(e_{\varepsilon\theta}\Phi_{\xi^\varepsilon_\theta}\bigr)\cdot {{\widetilde{R}^\varepsilon_\theta}
}. 
\end{aligned}
\label{eqma1}
\end{equation}
At the same time, we have 
\begin{equation}
\int_Q \xi^\varepsilon_\theta\cdot {\int_Q{\widetilde{R}^\varepsilon_\theta}}=\int_Q \xi^\varepsilon_\theta\cdot {\int_Q\bigg(R^\varepsilon_\theta- \int_Q R^\varepsilon_\theta -\overline{e}_{\varepsilon\theta}\nabla\bigl(e_{\varepsilon\theta} \Phi_{R^\varepsilon_\theta}\bigr)\bigg) }
=-\int_Q \xi^\varepsilon_\theta\cdot {\int_Q\overline{e}_{\varepsilon\theta}\nabla\bigl(e_{\varepsilon\theta}\Phi_{R^\varepsilon_\theta}\bigr)},
\label{eqma2}
\end{equation}
and
\begin{equation}
\begin{aligned}
\int_Q \overline{e}_{\varepsilon\theta}\nabla\bigl(e_{\varepsilon\theta}\Phi_{\xi^\varepsilon_\theta}\bigr)\cdot {{\widetilde{R}^\varepsilon_\theta}}&=\int_Q \overline{e}_{\varepsilon\theta}\nabla\bigl(e_{\varepsilon\theta}\Phi_{\xi^\varepsilon_\theta}\bigr)\cdot \biggl({\widetilde{R}^\varepsilon_\theta}+\int_Q R^\varepsilon_\theta\biggr)-\int_Q\overline{e}_{\varepsilon\theta}\nabla\bigl(e_{\varepsilon\theta}\Phi_{\xi^\varepsilon_\theta}\bigr)\cdot {\int_Q R^\varepsilon_\theta}\\[0.5em]
&=-\int_Q\overline{e}_{\varepsilon\theta}\nabla\bigl(e_{\varepsilon\theta}\Phi_{\xi^\varepsilon_\theta}\bigr)\cdot {\int_Q R^\varepsilon_\theta},
\end{aligned}
\label{eqma3}
\end{equation}
where for the second equality we use the fact that (see (\ref{eq_utilde}))
\[
\overline{e}_{\varepsilon\theta}\,{\rm div}\biggl\{e_{\varepsilon\theta}\bigg({\widetilde{R}^\varepsilon_\theta}+\int_Q R^\varepsilon_\theta
\bigg)\biggr\}=0.
\] 
Combining (\ref{eqma1}), (\ref{eqma2}), (\ref{eqma3}) yields 
\begin{align}
\label{rhs_tocorrect}
\int_Q e_{\varepsilon\theta} A\bigl(\theta\times u^{(1)}_\theta\bigr)\cdot {\curl\big(e_{\varepsilon\theta} 
{\widetilde{R}^\varepsilon_\theta}
\big)}&=\int_Q A\curl (e_{\varepsilon\theta} \xi^\varepsilon_\theta)\cdot {\curl\big(e_{\varepsilon\theta} 
{\widetilde{R}^\varepsilon_\theta}
\big)} \\[0.4em]
&
-\varepsilon^2\int_Q \xi^\varepsilon_\theta\cdot {\int_Q \overline{e}_{\varepsilon\theta}\nabla\bigl(e_{\varepsilon\theta}\Phi_{R^\varepsilon_\theta}\bigr)}-\varepsilon^2\int_Q \overline{e}_{\varepsilon\theta}\nabla\bigl(e_{\varepsilon\theta}\Phi_{\xi^\varepsilon_\theta}\bigr)\cdot {\int_Q R^\varepsilon_\theta}. \notag
\end{align}

We would like to rewrite the expression on the right-hand side of \eqref{rhs_tocorrect} using $\xi^\varepsilon_\theta$ as a test function in the integral identity \eqref{eq_R_id}. As we mentioned earlier, for a general measure $\mu$, the curl of an arbitrary function in $H^1_{\curl}$ is not uniquely defined. However, for the solution $\xi^\varepsilon_\theta$ to \eqref{eq_xi} there exists a natural choice of $\curl(e_{\varepsilon\theta}\xi^\varepsilon_\theta).$ Indeed, consider sequences $\{\phi_n\}$, 
$\{\psi_n\}\subset[C_\#^\infty]^3$ converging to $\xi^\varepsilon_\theta$ in $L^2(Q, d\mu)$, so that
\begin{equation*}
\curl (e_{\varepsilon\theta} \phi_n)\to\curl(e_{\varepsilon\theta} \xi^\varepsilon_\theta) \quad \quad \curl (e_{\varepsilon\theta} \psi_n)\to\curl(e_{\varepsilon\theta} \xi^\varepsilon_\theta)\qquad {\rm as}\ \ n\to\infty.
\end{equation*}
The difference $\curl (e_{\varepsilon\theta} \phi_n)-\curl (e_{\varepsilon\theta} \psi_n)$ converges to zero, and hence so does $\curl \phi_n-\curl \psi_n$. Henceforth we denote by $\curl \xi^\varepsilon_\theta$ the common $L^2$-limit of $\curl \phi_n$ for sequences $\{\phi_n\}\subset[C_\#^\infty]^3$ with the above properties. The unique choice of $\curl \xi^\varepsilon_\theta$ as above allows us to write
\begin{equation*}
\int_Q A\curl(e_{\varepsilon\theta} R^\varepsilon_\theta)\cdot {\curl(e_{\varepsilon\theta} \xi^\varepsilon_\theta)}+\varepsilon^2\int_Q R^\varepsilon_\theta\cdot {\int_Q \xi^\varepsilon_\theta}+\varepsilon^2\int_Q\overline{e}_{\varepsilon\theta}\nabla\bigl(e_{\varepsilon\theta}\Phi_{R^\varepsilon_\theta}\bigr)\cdot {\xi^\varepsilon_\theta}=\bigl\langle \mathcal{H}^\varepsilon_\theta,\xi^\varepsilon_\theta\bigr\rangle.
\end{equation*}
Furthermore, applying the decomposition \eqref{decomposition} to $\xi^\varepsilon_\theta$ and using $\Phi_{R^\varepsilon_\theta}\in H^1_{\#,0}$ as a test function for (\ref{eq_Phi}) with $u=\xi^\varepsilon_\theta$ (noting that, as $C^\infty_{\#,0}$ is dense in $H^1_{\#,0},$ the test functions $\phi$ in (\ref{eq_Phi_weak}) can be taken in $H^1_{\#,0}$),
we obtain
\begin{equation*}
\int_Q\overline{e}_{\varepsilon\theta}\nabla\bigl(e_{\varepsilon\theta}\Phi_{R^\varepsilon_\theta}\bigr)\cdot {\xi^\varepsilon_\theta}=
\int_Q\overline{e}_{\varepsilon\theta}\nabla\bigl(e_{\varepsilon\theta}\Phi_{R^\varepsilon_\theta}\bigr)\cdot {\overline{e}_{\varepsilon\theta}\nabla\bigl(e_{\varepsilon\theta}\Phi_{\xi^\varepsilon_\theta}\bigr)}=\int_Q\nabla\bigl(e_{\varepsilon\theta}\Phi_{R^\varepsilon_\theta}\bigr)\cdot {\nabla\bigl(e_{\varepsilon\theta}\Phi_{\xi^\varepsilon_\theta}\bigr)}.
\end{equation*}
By recalling also the properties \eqref{H_div_free}, \eqref{H_const_free} of $\mathcal{H}^\varepsilon_\theta,$ 
the above implies
\begin{equation*}
\begin{aligned}
\int_Q A\curl(e_{\varepsilon\theta} R^\varepsilon_\theta)\cdot {\curl(e_{\varepsilon\theta} \xi^\varepsilon_\theta)}&+\varepsilon^2\int_Q R^\varepsilon_\theta\cdot {\int_Q \xi^\varepsilon_\theta}+\varepsilon^2\int_Q\nabla\bigl(e_{\varepsilon\theta}\Phi_{R^\varepsilon_\theta}\bigr)\cdot {\nabla\bigl(e_{\varepsilon\theta}\Phi_{\xi^\varepsilon_\theta}\bigr)}
\\[0.4em]
&
=\bigl\langle \mathcal{H}^\varepsilon_\theta,\xi^\varepsilon_\theta\bigr\rangle=\bigl\langle \mathcal{H}^\varepsilon_\theta, {\widetilde{\xi}^\varepsilon_\theta}
\bigr\rangle, 
\end{aligned}
\end{equation*}
and therefore
\begin{equation}
	\label{eq_conj_xiR}
\int_Q A\curl(e_{\varepsilon\theta} \xi^\varepsilon_\theta)\cdot {\curl(e_{\varepsilon\theta} R^\varepsilon_\theta)}
+\varepsilon^2\int_Q \xi^\varepsilon_\theta\cdot {\int_Q R^\varepsilon_\theta}+\varepsilon^2\int_Q\nabla\bigl(e_{\varepsilon\theta}\Phi_{\xi^\varepsilon_\theta}\bigr)\cdot {\nabla\bigl(e_{\varepsilon\theta}\Phi_{R^\varepsilon_\theta}\bigr)} 
=\overline{\bigl\langle \mathcal{H}^\varepsilon_\theta,  {\widetilde{\xi}^\varepsilon_\theta}
\bigr\rangle
}.
\end{equation}
We now rewrite the equation \eqref{rhs_tocorrect} using \eqref{eq_conj_xiR}, as follows:
\begin{equation}
\begin{aligned}
&\int_Q e_{\varepsilon\theta} A\bigl(\theta\times u^{(1)}_\theta\bigr)\cdot {\curl \big(e_{\varepsilon\theta}{\widetilde{R}^\varepsilon_\theta}
\big)} 
=\overline{\bigl\langle \mathcal{H}^\varepsilon_\theta, {\widetilde{\xi}^\varepsilon_\theta}
\bigr\rangle}\\[0.4em]
&
-\biggl\{\int_Q A \curl(e_{\varepsilon\theta}\xi^\varepsilon_\theta)\cdot\curl\biggl(e_{\varepsilon\theta}\int_Q R^\varepsilon_\theta\biggr)
+\varepsilon^2\int_Q \xi^\varepsilon_\theta\cdot\int_Q R^\varepsilon_\theta+\varepsilon^2\int_Q\overline{e}_{\varepsilon\theta}
\nabla\bigl(e_{\varepsilon\theta}\Phi_{\xi^\varepsilon_\theta}\bigr)\cdot\int_Q R^\varepsilon_\theta\biggr\}
\\[0.4em]
&-\varepsilon^2\biggl\{
\int_Q \xi^\varepsilon_\theta\cdot {\int_Q \overline{e}_{\varepsilon\theta}\nabla\bigl(e_{\varepsilon\theta}\Phi_{R^\varepsilon_\theta}\bigr)}
+\int_Q\overline{e}_{\varepsilon\theta}\nabla\bigl(e_{\varepsilon\theta}\Phi_{\xi^\varepsilon_\theta}\bigr)\cdot {\overline{e}_{\varepsilon\theta}\nabla\bigl(e_{\varepsilon\theta}\Phi_{R^\varepsilon_\theta}\bigr)}\biggr\}.
\end{aligned}
\label{analysed_eq}
\end{equation}
The second term on the right-hand side of the last equation vanishes, by using the function identically equal to the  
vector
\[
\int_QR^\varepsilon_\theta
\] 
as a test function in the integral formulation for \eqref{eq_xi} and noting that
\begin{align*}
\int_Qe_{\varepsilon\theta} A\bigl(\theta\times u^{(1)}_\theta\bigr)\cdot{\curl\biggl(e_{\varepsilon\theta}\int_QR^\varepsilon_\theta\biggr)} ={\rm i}\varepsilon\overline{\int_QR^\varepsilon_\theta}\cdot \int_Q \theta\times A\bigl(\theta\times u^{(1)}_\theta\bigr)=0,
\end{align*}
in view of \eqref{curl_formula} and \eqref{condition_NK}.
The third term on the right-hand side of (\ref{analysed_eq}) also vanishes, by using $\overline{e}_{\varepsilon\theta}\nabla(e_{\varepsilon\theta}\Phi_{R^\varepsilon_\theta})$ as a test function in the integral formulation for \eqref{eq_xi} and taking advantage of the fact that when $\Phi_{R^\varepsilon_\theta}$ is approximated in $L^2(Q, d\mu)$ with smooth functions $\phi_n,$ the corresponding expressions $\curl(\nabla (e_{\varepsilon\theta}\phi_n))$ vanish for all $n.$

 Returning to \eqref{eq_R_with_rhs}--\eqref{auxR}, we thus obtain
\begin{equation}
\begin{aligned}
&\int_Q A \curl (e_{\varepsilon\theta}R^\varepsilon_\theta)\cdot {\curl(e_{\varepsilon\theta} R^\varepsilon_\theta )}+\varepsilon^2 \bigg| \int_QR^\varepsilon_\theta \bigg|^2+\varepsilon^2\int_Q\bigl|\nabla \bigl(e_{\varepsilon\theta}\Phi_{R^\varepsilon_\theta}\bigr)\bigr|^2\\[0.4em]
&=\int_Q \Big(F+\theta\times A(\theta\times c_\theta^\varepsilon) + \theta\times A\curl\bigl(N_\theta(\theta\times c_\theta^\varepsilon)\bigr)\\[0.4em] &\hspace{10em}-\bigl(\overline{e}_{\varepsilon\theta}\nabla (e_{\varepsilon\theta}\Psi_{\varepsilon\theta})+I\bigr)c_\theta^\varepsilon
\Big)\cdot 
\biggl({\widetilde{R}^\varepsilon_\theta}+\int_Q\overline{e}_{\varepsilon\theta}\nabla\bigl(e_{\varepsilon\theta} \Phi_{R^\varepsilon_\theta}\bigr)\biggr)
+\overline{\bigl\langle \mathcal{H}^\varepsilon_\theta, {\widetilde{\xi}^\varepsilon_\theta}.
\bigr\rangle}
\end{aligned}
\label{eq_Rfinale}
\end{equation}

To complete setting the stage for the estimates (\ref{estimate_R1}), (\ref{estimate_R2}), it remains to estimate the second term on the right-hand side of (\ref{eq_Rfinale}) by the $L^2$ norm of the function $F$ in (\ref{eq_R}), which we do next.
\begin{lem}
\label{est_lemma}
The last term on the right-hand side of \eqref{eq_Rfinale} is bounded uniformly in $\varepsilon$ and $\theta$:
\begin{equation*}
\bigl|\bigl\langle \mathcal{H}^\varepsilon_\theta, {\widetilde{\xi}^\varepsilon_\theta}
\bigr\rangle\bigr|\leq C\|F\|_{L^2(Q,d\mu)}, \quad C>0.
\end{equation*}
\end{lem}
\begin{proof}
It follows by the definition of $\mathcal{H}^\varepsilon_\theta,$ see \eqref{eq_R}, that
\begin{align*}
\bigl\langle \mathcal{H}^\varepsilon_\theta, {\widetilde{\xi}^\varepsilon_\theta}
\bigr\rangle &=\int_Q \Big(F+\theta\times A (\theta\times c_\theta^\varepsilon)+\theta\times A\bigl(\curl N_\theta(\theta\times c_\theta^\varepsilon)\bigr) 
\\[0.3em]
&\hspace{7em}-\bigl(\overline{e}_{\varepsilon\theta}\nabla (e_{\varepsilon\theta}\Psi_{\varepsilon\theta})+I\bigr)c_\theta^\varepsilon +{\rm i}\varepsilon\theta\times A\bigl(\theta\times u^{(1)}_\theta\bigr)\Big)\cdot {{\widetilde{\xi}^\varepsilon_\theta}
}\\[0.3em]
&+\int_Q A\bigl(\theta\times u^{(1)}_\theta\bigr) \cdot  {\curl {\widetilde{\xi}^\varepsilon_\theta}. 
} 
\end{align*}
Recalling the formula \eqref{curl_formula}, we write ({\it cf.} \eqref{triangle})
\begin{equation*}
{\curl {\widetilde{\xi}^\varepsilon_\theta}
}
=\overline{e}_{\varepsilon\theta}{\curl \bigl(e_{\varepsilon\theta}  {\widetilde{\xi}^\varepsilon_\theta}
\bigr)}-{\rm i}\varepsilon\theta\times{{\widetilde{\xi}^\varepsilon_\theta}
},
\end{equation*}
and thus
\begin{equation}
\begin{aligned}
\bigl\langle \mathcal{H}^\varepsilon_\theta, {\widetilde{\xi}^\varepsilon_\theta}
\bigr\rangle&=\int_Q \Big( F+\theta\times A (\theta\times c_\theta^\varepsilon)+\theta\times A\bigl(\curl N_\theta(\theta\times c_\theta^\varepsilon)\bigr)-\bigl(\overline{e}_{\varepsilon\theta}\nabla (e_{\varepsilon\theta}\Psi_{\varepsilon\theta})+I\bigr)c_\theta^\varepsilon\Big) 
 \cdot  {{\widetilde{\xi}^\varepsilon_\theta}
}\\[0.5em]
&+\int_Q e_{\varepsilon\theta} A\bigl( \theta\times u^{(1)}_\theta\bigr) \cdot  {\curl(e_{\varepsilon\theta} \xi^\varepsilon_\theta)},
\end{aligned}
\label{estimateHxi}
\end{equation}
since
\begin{equation*}
\int_Q e_{\varepsilon\theta} A \bigl(\theta\times u^{(1)}_\theta\bigr)\cdot {\curl\biggl(e_{\varepsilon\theta}\int_Q \xi^\varepsilon_\theta\biggr)}={\rm i}\varepsilon\int_Q \theta\times A\bigl(\theta\times u^{(1)}_\theta\bigr)\cdot {\int_Q\xi^\varepsilon_\theta}=0,
\end{equation*}
by the condition \eqref{condition_NK}.

Applying the H\"{o}lder inequality to the right-hand side of the equation \eqref{estimateHxi}, using the Poincar\'{e} inequality \eqref{curlpoincare} for $\xi^\varepsilon_\theta$, and taking into the account the estimate \eqref{xi_estimate} yields the claim.
\end{proof}
Combining Lemma \ref{est_lemma}, the Poincar\'{e} inequality \eqref{curlpoincare} with $u=R^\varepsilon_\theta,$ and the H\"{o}lder inequality for the first term on the right-hand side of the equation \eqref{eq_Rfinale}, we obtain the uniform bound
\begin{equation}\label{estimate_curlekR}
\bigl\|\curl(e_{\varepsilon\theta} R^\varepsilon_\theta)\bigr\|_{L^2(Q,d\mu)}\leq C \|F\|_{L^2(Q,d\mu)}.
\end{equation}
Finally, the estimate \eqref{estimate_curlekR} combined with \eqref{curlpoincare}, applied to $u=R^\varepsilon_\theta$ again, implies the estimate \eqref{estimate_R1}. The same bound, Lemma \ref{est_lemma}, and the equation \eqref{eq_Rfinale} imply the estimate \eqref{estimate_R2}.
\end{proof}

\begin{cor} \label{cor_U-ctheta}
There exists $C>0$ such that the following estimate holds for all $\varepsilon$, $\theta$ and $F:$
\begin{equation*}
\Bigl\|U^\varepsilon_\theta-\bigl(\overline{e}_{\varepsilon\theta}\nabla (e_{\varepsilon\theta}\Psi_{\varepsilon\theta})+I\bigr)c_\theta^\varepsilon\Bigr\|_{L^2(Q, d\mu)}\leq C \varepsilon\|F\|_{L^2(Q, d\mu)}.
\end{equation*}
\end{cor}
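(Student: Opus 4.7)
The starting point is the definition \eqref{U_expansion}, which yields
\[
U^\varepsilon_\theta - c_\theta = {\rm i}\varepsilon\, u^{(1)}_\theta + \varepsilon^2 R^\varepsilon_\theta,\qquad u^{(1)}_\theta = N(\theta\times c_\theta),
\]
so by the triangle inequality it suffices to estimate each summand in $L^2(Q,d\mu)$ by $C\varepsilon\|F\|_{L^2(Q,d\mu)}$.

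For the first term, the plan is to establish three elementary bounds and combine them. Firstly, testing \eqref{homo_problem_ft} with $\overline{c_\theta}$ and using positive-definiteness of $A^{\rm hom}$ gives $|c_\theta|^2 \le |\int_Q F\,d\mu|\,|c_\theta|$, hence $|c_\theta|\le \|F\|_{L^2(Q,d\mu)}$ since $\mu(Q)=1$. Secondly, $\theta\in\varepsilon^{-1}Q'$ implies $\varepsilon|\theta|\le \pi\sqrt{3}$, so $\varepsilon|\theta\times c_\theta|\le C\|F\|_{L^2(Q,d\mu)}$. Thirdly, the decomposition $N = \widetilde{N}+a_\theta$ together with the uniform-in-$\theta$ bound
\[
\|a_\theta(\theta\times c_\theta)\|_{L^2(Q,d\mu)}\le \|\widetilde{N}(\theta\times c_\theta)\|_{L^2(Q,d\mu)},
\]
established in the discussion following \eqref{required_id}, implies $\|N(\theta\times c_\theta)\|_{L^2(Q,d\mu)}\le C|\theta\times c_\theta|$, where $C$ depends only on $\widetilde{N}$ (and thus only on $A$ and $\mu$). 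Combining, $\varepsilon\|u^{(1)}_\theta\|_{L^2(Q,d\mu)}\le C\varepsilon\|F\|_{L^2(Q,d\mu)}$.

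For the second term, the plan is to invoke the Helmholtz decomposition \eqref{decomposition} applied to $R^\varepsilon_\theta$ itself,
\[
R^\varepsilon_\theta = \widetilde{R^\varepsilon_\theta}+\int_Q R^\varepsilon_\theta+\overline{e_{\varepsilon\theta}}\nabla\bigl(e_{\varepsilon\theta}\Phi_{R^\varepsilon_\theta}\bigr),
\]
and to estimate $\varepsilon^2\|R^\varepsilon_\theta\|_{L^2(Q,d\mu)}$ using the triangle inequality together with the two estimates of Theorem \ref{thm_estimate_R}: bound \eqref{estimate_R1} contributes $\varepsilon^2\cdot C\|F\|_{L^2(Q,d\mu)}\le C\varepsilon\|F\|_{L^2(Q,d\mu)}$ for $\varepsilon\in(0,1]$, while \eqref{estimate_R2} contributes $\varepsilon^2\cdot C\varepsilon^{-1}\|F\|_{L^2(Q,d\mu)}=C\varepsilon\|F\|_{L^2(Q,d\mu)}$. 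Summing the two yields the required bound on $\varepsilon^2\|R^\varepsilon_\theta\|_{L^2(Q,d\mu)}$.

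The proof is essentially a bookkeeping exercise once Theorem \ref{thm_estimate_R} is available; the only mildly non-trivial point is ensuring uniformity in $\theta$ of $\|N(\theta\times c_\theta)\|_{L^2(Q,d\mu)}$ and of $|c_\theta|$, both of which have already been established earlier in the paper. The main obstacle in any more detailed write-up would be making explicit the dependence of the constants on $A^{\rm hom}$ via the homogenised resolvent in \eqref{c_theta}, but this does not affect the final statement.
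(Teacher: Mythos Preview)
Your treatment of the term $\varepsilon^2 R^\varepsilon_\theta$ via Theorem \ref{thm_estimate_R} is correct and is exactly how the paper intends the corollary to follow. There is, however, a genuine gap in your bound for the first-order term $\varepsilon u^{(1)}_\theta$.

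From your three ingredients you obtain $\|N(\theta\times c_\theta)\|_{L^2(Q,d\mu)}\le C|\theta\times c_\theta|$ together with $\varepsilon|\theta\times c_\theta|\le C\|F\|_{L^2(Q,d\mu)}$. Chaining these gives only
\[
\varepsilon\bigl\|u^{(1)}_\theta\bigr\|_{L^2(Q,d\mu)}\le C\,\varepsilon|\theta\times c_\theta|\le C\|F\|_{L^2(Q,d\mu)},
\]
which is one power of $\varepsilon$ short of what you claim. The step ``Combining, $\varepsilon\|u^{(1)}_\theta\|\le C\varepsilon\|F\|$'' does not follow from the bounds you have written.

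The repair is simple and hides in your own first step: when you test \eqref{homo_problem_ft} with $\overline{c_\theta}$, do not discard the quadratic term coming from $A^{\rm hom}$. The identity reads
\[
A^{\rm hom}(\theta\times c_\theta)\cdot\overline{(\theta\times c_\theta)}+|c_\theta|^2=\int_Q F\,d\mu\cdot\overline{c_\theta},
\]
so positive-definiteness of $A^{\rm hom}$ yields not only $|c_\theta|\le\|F\|_{L^2(Q,d\mu)}$ but also $|\theta\times c_\theta|\le C\|F\|_{L^2(Q,d\mu)}$, uniformly in $\theta$. With this sharper bound your third ingredient gives $\|u^{(1)}_\theta\|_{L^2(Q,d\mu)}\le C\|F\|_{L^2(Q,d\mu)}$ directly, and hence $\varepsilon\|u^{(1)}_\theta\|_{L^2(Q,d\mu)}\le C\varepsilon\|F\|_{L^2(Q,d\mu)}$ as required. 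The crude bound $\varepsilon|\theta|\le\pi\sqrt{3}$ is then not needed at all for this term.
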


\subsection{Conclusion of proof: 
convergence estimate for $z_\theta^\varepsilon$}
\label{section_final}
\begin{prop}\label{prop_z}
There exists $C>0$ such that the function $z_\theta^\varepsilon$ in
 \eqref{u=U+z} satisfies the estimates
\begin{align}
&\|z_\theta^\varepsilon\|_{L^2(Q,d\mu)}\leq C\varepsilon \|F\|_{L^2(Q, d\mu)},\label{estimate_z}\\[0.4em] 
&\bigl\|\curl (e_{\varepsilon\theta} z^\varepsilon_\theta)\bigr\|_{L^2(Q,d\mu)}\leq C\varepsilon^2 \|F\|_{L^2(Q, d\mu)},\label{curl_z}
\end{align} 
for all $\varepsilon>0,$ $\theta\in\varepsilon^{-1} Q',$ $F\in L^2(Q, d\mu).$
\end{prop}
\begin{proof}
The function $z_\theta^\varepsilon\in H^1_{\curl},$ see (\ref{u=U+z}), (\ref{U_expansion}), solves the problem
\begin{equation}
\label{eq_z}
\varepsilon^{-2} \overline{e}_{\varepsilon\theta} \curl\bigl(A \curl (e_{\varepsilon\theta} z^\varepsilon_\theta)\bigr)+z_\theta^\varepsilon=
-{\rm i}\varepsilon u^{(1)}_\theta
-\varepsilon^2{\widetilde{R}^\varepsilon_\theta},
\end{equation}
understood in the weak sense. Using $z_\theta^\varepsilon$ as a test function in the integral formulation of (\ref{eq_z}), we obtain
\begin{equation}
\varepsilon^{-2}\int_Q A\curl(e_{\varepsilon\theta} z_\theta^\varepsilon)\cdot{\curl(e_{\varepsilon\theta} z_\theta^\varepsilon)}+ \int_Q|z_\theta^\varepsilon|^2
=-{\rm i}\varepsilon \int_Q u^{(1)}_\theta\cdot{z_\theta^\varepsilon}-\varepsilon^2\int_Q {\widetilde{R}^\varepsilon_\theta}
\cdot {z_\theta^\varepsilon}.
\label{rhs_eq}
\end{equation}

Using the estimate 
\[
\bigl\|R^\varepsilon_\theta\bigr\|\leq C\varepsilon^{-1} \|F\|_{L^2(Q, d\mu)},
\]
which follows from (\ref{estimate_R1}), (\ref{estimate_R2}), the elliptic estimate for the equation
\[
{\overline{e}_{\varepsilon\theta}}\triangle\bigl(e_{\varepsilon\theta} \Phi_{R^\varepsilon_\theta}\bigr) = {\overline{e}_{\varepsilon\theta}}\divv\bigl(e_{\varepsilon\theta} R^\varepsilon_\theta\bigr)
\]
and then the observation that 
\[
\widetilde{R}^\varepsilon_\theta=\biggl\{R^\varepsilon_\theta-\int_Q R^\varepsilon_\theta-\overline{e}_{\varepsilon\theta}\nabla\bigl(e_{\varepsilon\theta} \Phi_{R^\varepsilon_\theta}\bigr)+\int_Q\overline{e}_{\varepsilon\theta}\nabla\bigl(e_{\varepsilon\theta} \Phi_{R^\varepsilon_\theta}\bigr)\biggr\}-\int_Q\overline{e}_{\varepsilon\theta}\nabla\bigl(e_{\varepsilon\theta} \Phi_{R^\varepsilon_\theta}\bigr),
\]
we infer from (\ref{estimate_R1}) that  
\begin{equation}
\bigl\|\widetilde{R}^\varepsilon_\theta\bigr\|\leq C\varepsilon^{-1} \|F\|_{L^2(Q, d\mu)}.
\label{R_tilde}
\end{equation}
Now, applying the H\"{o}lder inequality to each term on the right-hand side of (\ref{rhs_eq}), 
the formula \eqref{c_theta}, and finally the estimate (\ref{R_tilde}),
we obtain \eqref{estimate_z}.

The estimate (\ref{curl_z}) follows immediately from (\ref{rhs_eq}), by using the uniform positive-definiteness of the matrix $A,$ 
applying once again the H\"{o}lder inequality to each term on its right-hand side, and using the estimate (\ref{estimate_z}) we have just obtained as well as the estimates for $u^{(1)}_\theta,$ $\widetilde{R}^\varepsilon_\theta$ that we derived in our proof of (\ref{estimate_z}).
\end{proof}
Combining Corollary \ref{cor_U-ctheta} and Proposition \ref{prop_z}, we obtain \eqref{main_stat}, since
\begin{equation*}
\Bigl\|u^\varepsilon_\theta-\bigl(\overline{e}_{\varepsilon\theta}\nabla (e_{\varepsilon\theta}\Psi_{\varepsilon\theta})+I\bigr)c_\theta^\varepsilon\Bigr\|_{L^2(Q,d\mu)}\leq \|z^\varepsilon_\theta\|_{L^2(Q, d\mu)}+\Bigl\|U^\varepsilon_\theta-\bigl(\overline{e}_{\varepsilon\theta}\nabla (e_{\varepsilon\theta}\Psi_{\varepsilon\theta})+I\bigr)c_\theta^\varepsilon\Bigr\|_{L^2(Q, d\mu)},
\end{equation*}
which concludes the proof of Theorem \ref{main_thm}.

\section{Estimates for the electric field and displacement}

\label{electric_sec}

In what follows, we refer to the non-dimensional version of the Maxwell system in the frequency domain ({\it cf.} the system (\ref{maxwell_timeharmoperator}) with $z=1$):
\begin{equation}
\left\{\begin{array}{ll}
\dfrac{\nu_0}{\nu}\curl E^\varepsilon+{\rm i}H^\varepsilon=0,\\[1em]
\dfrac{\eta_0}{\eta}\curl H^\varepsilon-{\rm i}E^\varepsilon=\dfrac{\eta_0}{\eta}J^\varepsilon,
\end{array}\right.
\label{nondim}
\end{equation}
where $\eta_0,$ $\nu_0$ are the permittivity and permeability of vacuum,
$\eta=\eta(\cdot/\varepsilon),$ $\nu=\nu(\cdot/\varepsilon)$ are the ($\varepsilon$-periodic) permittivity and permeability of the medium,
and $E^\varepsilon, H^\varepsilon, J^\varepsilon$ are dimensionless quantities, which we henceforth refer to as the ``magnetic field", ``electric field", and ``current density".

As mentioned in the introduction, Theorem \ref{main_thm} concerns the Maxwell system in the non-magnetic case $(\nu=\nu_0)$
and without external currents: formally replacing $(0, (\eta_0/\eta)J^\varepsilon)$ on the right-hand side of (\ref{nondim}) by $(-{\rm i}f^\varepsilon, 0),$ we obtain 
\begin{equation}\label{Max_system}
\begin{cases}
\curl E^\varepsilon
+{\rm i}H^\varepsilon=-{\rm i}f^\varepsilon,\\[0.3em]
A(\cdot/\varepsilon)\curl H^\varepsilon-{\rm i}E^\varepsilon=0,
\end{cases}
\end{equation}
where the coefficient matrix $A$ stands for the inverse relative dielectric permittivity 
$\eta_0/\eta.$
By eliminating $E^\varepsilon$ from (\ref{Max_system}), we obtain ({\it cf.} (\ref{curleq_u}))
\begin{equation}
\curl\bigl(A(\cdot/\varepsilon)\curl H^\varepsilon\bigr)-H^\varepsilon=f^\varepsilon.
\label{curl_form}
\end{equation}
The equation (\ref{curl_form}) describes the actual physical behaviour of the magnetic field, and is therefore set ``on the spectrum'', so $\lambda=-1$ in the ``resolvent" formulation
\begin{equation}
\curl\bigl(A(\cdot/\varepsilon)\curl u^\varepsilon\bigr)+\lambda u^\varepsilon=f^\varepsilon.
\label{lambda_res}
\end{equation}
where the solution $u^\varepsilon$ represents the magnetic field $H^\varepsilon.$
In order to study the above problem quantitatively (aiming eventually at the behaviour of original time-dependent system), we allow the parameter $\lambda$ to take any complex values in the complement of the negative half-line, and as our estimates are valid uniformly in any compact subset as long as they are established for one specific value of $\lambda,$ we set $\lambda=1$ to obtain the equation (\ref{curleq_u}). This new resolvent formulation corresponds to the following analogue of (\ref{Max_system}) ``away from the spectrum'':
\begin{equation}
\label{Max_system1}
\begin{cases}
\curl E^\varepsilon
+H^\varepsilon=f^\varepsilon,\\[0.3em]
A(\cdot/\varepsilon)\curl H^\varepsilon-E^\varepsilon=0.
\end{cases}
\end{equation}
In our discussion of (\ref{Max_system1}), we continue referring to $E^\varepsilon,$ $H^\varepsilon$ as the electric and magnetic field, in line with the existing literature on the subject of norm-resolvent estimates in homogenisation, see for example \cite{BS07}. 

In view of the above discussion, the estimate \eqref{corollary_estimate} holds for the magnetic field $H^\varepsilon$ and magnetic induction $B^\varepsilon$ (which coincide in the context of the formulation (\ref{nondim})).
We complete the analysis by establishing an operator-norm estimate for 
the electric displacement
$D^\varepsilon=A(\cdot/\varepsilon)^{-1}E^\varepsilon.$

As in Section \ref{Floquet_section}, starting from \eqref{Max_system} one obtains the following Gelfand-transformed system:
\begin{equation}\label{ft_system}
\begin{cases}\varepsilon^{-1}{\overline{e}_{\varepsilon\theta}}\curl(A e_{\varepsilon\theta} D^\varepsilon_\theta)+ H^\varepsilon_\theta=F,\\[0.35em]
\varepsilon^{-1} {\overline{e}_{\varepsilon\theta}}\curl(e_{\varepsilon\theta} H^\varepsilon_\theta) = D^\varepsilon_\theta,
\end{cases}
\end{equation}
where $H^\varepsilon_\theta$ coincides with $u^\varepsilon_\theta$ defined in \eqref{ft_problem}, and $D^\varepsilon_\theta := {\overline{e}_{\varepsilon\theta}}
\mathcal{F}_\varepsilon D^\varepsilon$. Recall that $F$ is $\divv_{\varepsilon\theta}$-free, and so are the fields $D^\varepsilon_\theta,$ $H^\varepsilon_\theta.$
Substituting \eqref{U_expansion} into the second line of \eqref{ft_system} yields
\begin{align*}
D^\varepsilon_\theta &= \varepsilon^{-1}{\overline{e}_{\varepsilon\theta}}\curl e_{\varepsilon\theta}\Bigl(\bigl(\overline{e}_{\varepsilon\theta}\nabla (e_{\varepsilon\theta}\Psi_{\varepsilon\theta})+I\bigr)c_\theta^\varepsilon+\varepsilon N_\theta ({\rm i}\theta\times c_\theta^\varepsilon)+\varepsilon^2 R^\varepsilon_\theta+z^\varepsilon_\theta\Bigr)
\\[0.4em]
&
=(\curl\widehat{N}+I) ({\rm i}\theta\times c_\theta^\varepsilon) +\varepsilon \big({\rm i}\theta\times N_\theta({\rm i}\theta\times c_\theta^\varepsilon) +{\overline{e}_{\varepsilon\theta}}\curl (e_{\varepsilon\theta} R^\varepsilon_\theta) \big)+\varepsilon^{-1}{\overline{e}_{\varepsilon\theta}}\curl (e_{\varepsilon\theta} z^\varepsilon_\theta),
\end{align*}
where $c_\theta^\varepsilon$ solves \eqref{c_theta}, $\widehat{N}$ solves (\ref{curl_cellpb}), and $N_\theta$ is defined in \eqref{def_Nsum}, $R^\varepsilon_\theta$ is the solution of \eqref{eq_R}, and $z^\varepsilon_\theta$ solves (\ref{eq_z}). As a consequence of the estimates \eqref{estimate_curlekR}, \eqref{curl_z}, we obtain the following result.
\begin{thm}
\label{thm_ED}
There exists a constant $C>0$ independent of $\theta$, $\varepsilon$ and $F$ such that
\begin{align*}
\bigl\| D^\varepsilon_\theta - (\curl\widehat{N}+I)({\rm i}\theta\times c_\theta^\varepsilon)\bigr\|_{L^2(Q, d\mu)} &\leq \varepsilon C \|F\|_{L^2(Q, d\mu)}.
\end{align*}
\end{thm}

Similarly to Theorem \ref{corollary_main_thm}, we then obtain the following estimates for the original fields on ${\mathbb R}^3.$
\begin{thm}
\label{cor_DE}
There exists $C>0,$ independent of $\varepsilon$ and the choice of $f^\varepsilon\in L^2(\R^3, d\mu^\varepsilon),$ such that
\begin{equation}
	\bigl\| D^\varepsilon-D_{\rm hom}^\varepsilon\bigr\|_{L^2(\R^3, d\mu^\varepsilon)}
\leq \varepsilon C \|f^\varepsilon\|_{L^2(\R^3, d\mu^\varepsilon)},
\label{D_est}
\end{equation}
where
\begin{equation*}
\begin{aligned}
D_{\rm hom}^\varepsilon(x):=\dfrac{1}{(2\pi)^{3/2}}\biggl\{\curl \widehat{N}\biggl(\frac{x}{\varepsilon}\biggr)+I\biggr\} 
\curl\int_{\R^3}\int_{\R^3}e_\theta(x-y)
\big(\mathfrak{A}_\theta^{\rm hom}+M^{\rm hom}_{\varepsilon\theta}\big)^{-1}f^\varepsilon(y)  d\mu^\varepsilon(y)d\theta,
\\[-0.1em]
\hspace{30em}x\in{\mathbb R}^3.
\end{aligned}
\end{equation*}
\end{thm}

\begin{rmk}
	Contrary to the estimate \eqref{corollary_estimate} for the magnetic field $u^\varepsilon$ (and, hence, magnetic induction), the estimate \eqref{D_est} for the electric displacement $D^\varepsilon$ (and hence a similar estimate for the field $E^\varepsilon=A(\cdot/\varepsilon)D^\varepsilon$)
	contains an oscillatory term,
	corresponding to the so-called ``order-zero corrector" \cite{Sus05}, \cite{Sus08}.
	In the case when $\mu$ is the Lebesgue measure, the matrix $\curl\widehat{N}$ (in the case of the electric field, the matrix  $A(\curl\widehat{N}+I)(A^{\rm hom})^{-1}-I$) 
	has zero mean over $Q.$ This recovers the classical result concerning the weak convergence $D^\varepsilon-\widetilde{D}_{\rm hom}^\varepsilon\rightharpoonup0$ (similarly $E^\varepsilon-\widetilde{E}_{\rm hom}^\varepsilon\rightharpoonup0$ for  $\widetilde{E}_{\rm hom}^\varepsilon:=A^{\rm hom}\widetilde{D}_{\rm hom}^\varepsilon$) as $\varepsilon\to0$, where 
	\begin{equation*}
		\begin{cases}
		\curl\bigl(A^{\rm hom}\widetilde{D}_{\rm hom}^\varepsilon\bigr)+u_{\rm hom}^\varepsilon=f^\varepsilon,
		\\[0.25em]
		\curl u_{\rm hom}^\varepsilon-\widetilde{D}_{\rm hom}^\varepsilon=0.
		\end{cases}
	\end{equation*} 
\end{rmk}

\section{Further developments of the method}

{\bf 1.} One physically relevant setting is the one where the magnetic permeability is still unitary, but the system has external currents. In this case, it is convenient to write it in the form ({\it cf.} (\ref{Max_system1}))
\begin{equation}\label{Maxwell_current}
\begin{cases}
\curl\bigl(A(\cdot/\varepsilon)D^\varepsilon\bigr)+H^\varepsilon=0,\\[0.3em]
\curl H^\varepsilon-D^\varepsilon=g^\varepsilon,
\end{cases}
\end{equation}
where $g^\varepsilon$ represents the divergence-free current density, and the magnetic field and $H^\varepsilon$ and electric displacement $D^\varepsilon$ are sought to be divergence-free. This is, in some sense, an intermediate case between the one analysed in the present paper and the general case with non-unitary magnetic permeability.

Following \cite{Sus08}, it is convenient to set  ${A}^{1/2}D^\varepsilon=: \mathcal{D}^\varepsilon$ in \eqref{Maxwell_current}, so the system is equivalent to 
\begin{equation}\label{eq_maxD}
{A}^{1/2}\curl\curl({A}^{1/2}\mathcal{D}^\varepsilon)+\mathcal{D}^\varepsilon= -\widetilde{A}^{1/2}g^\varepsilon,\qquad \divv({A}^{-1/2}\mathcal{D}^\varepsilon)=0. 
\end{equation}
Furthermore, as in \cite{Sus08},
we define an operator of the problem \eqref{eq_maxD} by the quadratic form
\begin{equation*}
{\mathfrak d}_\varepsilon(w,w)=\int_{\R^3}\Bigl(\curl ({A}^{1/2}w)\cdot  {\curl ({A}^{1/2}w)}+\bigl|\divv ({A}^{-1/2}w)\bigr|^2\Bigr)d\mu^\varepsilon,
\end{equation*}
with domain
\begin{equation*}
{\rm dom}({\mathfrak d}_\varepsilon)=\bigl\{w\in L^2(\R^3, d\mu^\varepsilon): \curl({A}^{1/2}w)\in L^2(\R^3, d\mu^\varepsilon), 
\,\divv({A}^{-1/2}w)\in L^2(\R^3, d\mu^\varepsilon)\bigr\}.
\end{equation*}

\noindent{\bf 2.} In the general setting with variable permittivity and permeability, the Maxwell system has the form
\begin{equation}
\begin{cases}
\curl\bigl(A(\cdot/\varepsilon)D^\varepsilon\bigr)+B^\varepsilon=f^\varepsilon,\\[0.35em]
\curl\bigl(\widetilde{A}(\cdot/\varepsilon)B^\varepsilon\bigr)-D^\varepsilon=g^\varepsilon,
\end{cases}
\label{general_eq}
\end{equation}
with periodic matrices $A^{-1}$ (inverse of the relative permittivity), $\widetilde{A}^{-1}$ (inverse of the relative permeability). In the equation (\ref{general_eq}), $B^\varepsilon$ represents magnetic induction, and $f^\varepsilon,$ $g^\varepsilon$ are divergence-free $L^2$ functions. In what follows, we write $A,$ $\widetilde{A}$ in place of $A(\cdot/\varepsilon),$ $\widetilde{A}(\cdot/\varepsilon),$ respectively, and without loss of generality assume that\footnote{This also corresponds to the physical form of the Maxwell system, where $f^\varepsilon$ is the current density and $g^\varepsilon=0.$} $g^\varepsilon=0.$

As in the previous remark, following \cite{Sus08} and labelling $\widetilde{A}^{1/2}B^\varepsilon=: \mathcal{B}^\varepsilon,$ the system is written equivalently as
\begin{equation}
\widetilde{A}^{1/2}\curl \big(A \curl(\widetilde{A}^{1/2}\mathcal{B}^\varepsilon)\big)+\mathcal{B}^\varepsilon= -\widetilde{A}^{1/2}f^\varepsilon,\qquad \divv\bigl(\widetilde{A}^{-1/2}\mathcal{B}^\varepsilon\bigr)=0. 
\label{AB_prob}
\end{equation}
The operator of the problem (\ref{AB_prob}) 
is defined by the quadratic form
\begin{equation*}
{\mathfrak b}_\varepsilon(u,u)=\int_{\R^3} \Bigl( A\curl (\widetilde{A}^{1/2}w)\cdot  {\curl (\widetilde{A}^{1/2}w)}+\bigl|\divv (\widetilde{A}^{-1/2}w)\bigr|^2\Bigr)d\mu^\varepsilon,
\end{equation*}
with domain
\begin{equation*}
{\rm dom}({\mathfrak b}_\varepsilon)=\bigl\{u\in L^2(\R^3, d\mu^\varepsilon): \curl(\widetilde{A}^{1/2}u)\in L^2(\R^3, d\mu^\varepsilon), \,\divv(\widetilde{A}^{-1/2}u)\in L^2(\R^3, d\mu^\varepsilon)\bigr\}.
\end{equation*}

\noindent{\bf 3.} In both above cases, we represent $L^2(\R^3, d\mu^\varepsilon)$ as an orthogonal sum of the ``solenoidal" and ``potential" subspaces
\begin{align*}
L^2_{\rm sol}({\mathbb R}^3, d\mu^\varepsilon)&=\bigl\{u\in L^2(\R^3, d\mu^\varepsilon): \divv\bigl(A^{-1/2}u\bigr)=0\bigr\},
\\[0.4em]
L^2_{\rm pot}({\mathbb R}^3, d\mu^\varepsilon)&=\bigr\{A^{-1/2}\nabla v: \nabla v\in L^2(\R^3, d\mu^\varepsilon)\bigr\}
\end{align*}
and prove an appropriate version of the Helmholtz decomposition and Poincar\'{e} inequality ({\it cf.} Section \ref{Helmholtz_section})
for quasiperiodic functions, following an application of the Floquet transform to $L^2({\mathbb R}^3, d\mu^\varepsilon),$ as in Section \ref{Floquet_section}. This allows us to pursue an asymptotic procedure similar to the one we describe in Sections \ref{section_as}--\ref{proof_sec} of the present paper. We shall present the related argument in a future publication.

\renewcommand{\theequation}{A.\arabic{equation}}
\renewcommand{\thesubsection}{A.\arabic{subsection}}
\setcounter{equation}{0}


\section*{Appendix A: Non-dimensionalisation of the Maxwell system}

The system of Maxwell equations describing electromagnetic phenomena in ${\mathbb R}^3$ 
is given by (see {\it e.g.} \cite{Cessenat}, \cite{Jackson})
\begin{equation*}
\begin{cases}
\curl \mathbb{E}=-\partial_t \mathbb{B},\\[0.25em]
\curl \mathbb{H}=\partial_t \mathbb{D}+\mathcal{J},\\[0.25em]
\divv \mathbb{D}=\rho, \quad \divv \mathbb{B}=0,
\end{cases}
\end{equation*}
where $\mathbb{E}$ and $\mathbb{H}$ are the electric and magnetic fields, $\mathbb{D}$ is the electric displacement and $\mathbb{B}$ is the magnetic induction, $\mathcal{J}$ is the current density and $\rho$ the charge density. In the present work we assume that $\rho=0.$ 

The fields $\mathbb{E}$ and $\mathbb{D}$, $\mathbb{B}$ and $\mathbb{H}$ are linked by the constitutive relations
\begin{equation}
\label{constitutive_relations}
\mathbb{D}=\widehat{\eta}\stackrel{t}{*}\mathbb{E},\quad \mathbb{B}=\widehat\nu\stackrel{t}{*}\mathbb{H},
\end{equation}
where the convolutions are taken with respect the time $t.$ In (\ref{constitutive_relations}), the $\widehat{\eta}(x,t)$ is the dielectric permittivity and $\widehat{\nu}(x,t)$ is the magnetic permeability, which are ingeneral time-dependent. We consider the case when (\ref{constitutive_relations})  are local in time
so $\widehat{\eta}(x,t)=\eta(x)\delta(t)$ and $\widehat{\nu}(x,t)=\nu(x)\delta(t)$. The resulting Maxwell system
 is
\begin{equation}
\begin{cases}
\curl \mathbb{E}=-\nu\partial_t \mathbb{H},\\[0.25em]
\curl \mathbb{H}=\eta\partial_t \mathbb{E}+\mathcal{J},\\[0.25em]
\divv \mathbb{E}=0, \quad \divv \mathbb{H}=0.
\end{cases}
\label{Max}
\end{equation}

Formally applying the Fourier transform in time to the system (\ref{Max}) leads to its version with harmonic time dependence of 
$\mathcal{J}$ $\mathbb{E},$ $\mathbb{H}:$ 
\begin{equation*}
\mathcal{J}={\rm e}^{{\rm i}\omega t}J,\quad \mathbb{E}={\rm e}^{{\rm i}\omega t}E, \quad \mathbb{H}={\rm e}^{{\rm i}\omega t}H,
\end{equation*}
where $\omega$ plays the role of the frequency. For a given current density amplitude $J,$ the vector $(E, H)$ satisfies
\begin{equation}
\label{maxwell_timeharm}
\begin{cases}
\nu^{-1} \curl E= -{\rm i}\omega H,\\[0.25em]
\eta^{-1} \curl H = {\rm i}\omega E+\eta^{-1}J,\\[0.25em]
\divv E=0, \quad \divv H=0.
\end{cases}
\end{equation}

In order to study the system of Maxwell equations from the mathematical point of view, we write it in a dimensionless way. Following the idea developed in \cite[Chapter 1]{bar2003} we define
\begin{equation}\label{dimension_HE}
H= \phi \widetilde{H}, \quad E= \psi \widetilde{E},
\end{equation}
where $\phi$, $\psi$ are some fixed values with the same dimensions as the magnetic and electric fields, and $\widetilde{H}$, $\widetilde{E}$ are the corresponding dimensionless vectors representing the magnetic and electric fields.
Starting from \eqref{maxwell_timeharm}, it is sufficient to do the dimensional analysis for the homogeneous system, where $J=0.$ Using \eqref{dimension_HE}, the second line 
of \eqref{maxwell_timeharm} takes the form
\begin{equation}\label{eq_EH_dimension}
\eta^{-1} \frac{\phi}{\psi} \curl  \widetilde{H} ={\rm i}\omega \widetilde{E}.
\end{equation}
Denote by $\eta_0$ and $\nu_0$ are the permittivity and permeability of the vacuum. Combining the first two equations in (\ref{maxwell_timeharm}), we infer that the dimensions of $\sqrt{\eta_0/\nu_0}$ are the same as those of $\phi/\psi.$ Henceforth we choose $\phi,$ $\psi$ so that $\phi/\psi=\sqrt{\eta_0/\nu_0}.$
Multiplying both sides of \eqref{eq_EH_dimension} by $\sqrt{\eta_0}$ one has
\begin{equation*}
\bigg(\frac{\eta}{\eta_0}\bigg)^{-1} \curl  \widetilde{H}={\rm i}\omega \sqrt{\nu_0 \eta_0} \widetilde{E}.
\end{equation*}

Assuming a periodic spatial dependence of $\eta,$ $\nu,$ we write the first equation in \eqref{maxwell_timeharm} as 
\begin{equation}
\label{eq_EH_dimension_1}
 \biggl\{\frac{\eta}{\eta_0}\!\biggl(\frac{x}{d}\biggr)\biggr\}^{-1}\curl_x\widetilde{H}={\rm i}\sqrt{\nu_0 \eta_0} \widetilde{E},
\end{equation}
where $d$ is the period. 
Note that
$\omega=2\pi c/\lambda,$
where $c$ and $\lambda$ are the wave speed and wavelength in vacuum. 
Introduce the non-dimensional parameter $\widetilde{x}= 2\pi x/\lambda$, so \eqref{eq_EH_dimension_1} becomes
\[
\frac{2\pi}{\lambda}\bigg\{\frac{\eta}{\eta_0}\bigg(\frac{\widetilde{x}}{2\pi d/ \lambda}\bigg)\bigg\}^{-1}\curl_{\widetilde{x}}  \widetilde{H}={\rm i}\frac{2\pi c_0}{\lambda}\sqrt{\nu_0 \eta_0} \widetilde{E}.
\]
Noting that $c\sqrt{\nu_0 \eta_0}=1$ and rescaling $y=\widetilde{x}/\varepsilon,$ where $\varepsilon:=2\pi d/\lambda$ we obtain
\begin{equation}\label{maxwell_dimensionless_1}
\bigg\{\frac{\eta}{\eta_0}(y)\bigg\}^{-1}\curl_y \widetilde{H}={\rm i}\frac{2\pi d}{\lambda} \widetilde{E}.
\end{equation}
Similarly, we carry out the dimensional analysis for the second equation in \eqref{maxwell_timeharm}, which yields  
\begin{equation}\label{maxwell_dimensionless_2}
\bigg\{\frac{\nu}{\nu_0}(y)\bigg\}^{-1}\curl_y \widetilde{E}= -{\rm i}\frac{\lambda}{2\pi d} \widetilde{H}.
\end{equation}

The non-dimensional form of the Maxwell system equations now follows ({\it cf.} (\ref{nondim}), for which $z=1$):
\begin{equation}
\label{maxwell_timeharmoperator}
\begin{pmatrix}
0 & \widetilde{A}\curl \\[0.3em]
-A\curl & 0
\end{pmatrix}
\begin{pmatrix}
H \\[0.3em]
E
\end{pmatrix}
+{\rm i}z
\begin{pmatrix}
H \\[0.3em]
E
\end{pmatrix}
=
\begin{pmatrix}
\widetilde{A} f \\[0.3em]
-Ag 
\end{pmatrix},
\end{equation}
where for brevity we have removed the tilde from the dimensionless fields and displacements.
Here $z=\lambda/(2\pi d)$ 
is the dimensionless parameter that appearing in \eqref{maxwell_dimensionless_1}, \eqref{maxwell_dimensionless_2}, $A=\eta/\eta_0$ is the inverse of the relative permittivity and $\widetilde{A}:=\nu/\nu_0$ is the inverse of the relative permeability. Furthermore, $g$ is a divergence-free function representing the external currents of the system, and $f$ is a divergence-free auxiliary function. 

In the spirit of the works \cite{Birman_Solomyak_89}. \cite{BS04}, one can consider the Maxwell operator $\mathfrak{M}$ given by the differential expression 
\[
\begin{pmatrix} 0 &  \widetilde{A}\curl\\[0.3em]
 -A\curl& 0 \end{pmatrix}
\]
acting on
\begin{align*}
{\rm dom}(\mathfrak{M})=\bigl\{(H, E)\in L^2(\R^3)&\oplus L^2(\R^3):\\[0.3em]
&\divv H=0, \,\divv E=0, \,A\curl H \in L^2(\R^3), \,\widetilde{A}\curl E \in L^2(\R^3)\bigr\},
\end{align*}
where $L^2(\R^3)$ is the space of $\C^3$-valued functions on ${\mathbb R}^3$ that are square-integrable with respect to the Lebesgue measure. 


Convergence estimates similar to those stated in Theorems \ref{main_thm}, \ref{corollary_main_thm},  \ref{thm_ED}, and \ref{cor_DE} are proved under the assumption that $-{\rm i}z\in K\cap\rho(\mathfrak{M})$, where $K\subset \R^3$ is compact and $\rho(\mathfrak{M})$ is the resolvent set of $\mathfrak{M}.$ In particular, $z=-{\rm i}$ corresponds to $\lambda=1$ in the formulation (\ref{lambda_res}), which is obtained from (\ref{maxwell_timeharmoperator}) by setting assuming that the medium is non-magnetic, i.e., $\nu=\nu_0$ and setting the second component of the right-hand side to zero.

\renewcommand{\theequation}{B.\arabic{equation}}
\renewcommand{\thesubsection}{B.\arabic{subsection}}
\setcounter{equation}{0}


\section*{Appendix B: Validity of (\ref{curlpoincare}) for some singular measures}
\label{measures_section}

Here we show that Assumption \ref{ass1} holds for the measures from the class (a) described at the end of Section \ref{Helmholtz_section}, and hence for the classes (b), (c). The validity of Assumption \ref{ass1} for the Lebesgue measure (example (d)) is shown easily via an argument based on the Fourier series, see {\it e.g.} \cite{CE16}.

Consider a finite set $\{{\mathcal P}_j\}_{j=1}^N$ of (two-dimensional) planes in ${\mathbb R}^3,$ such that each plane is orthogonal to one of the coordinate axes. Define the measure $\mu$ on $Q$ by the formula 
\begin{equation}
\mu(B)=N^{-1}\sum_{j=1}^N\vert {\mathcal P}_j\cap B\vert_j\ \ {\rm for\ all\ Borel\ } B\subset Q,
\label{mu}
\end{equation}
where $\vert\cdot\vert_j$ represents the $2$-dimensional Lebesgue measure. In what follows (see Section \ref{connect}), we will use the assumption that $(\cup_{j=1}^N{\mathcal P}_j)\cap Q$ is non-empty and connected.
For each $j=1,\dots, N,$ we also consider the measure $\mu_j$ defined by
\[
\mu_j(B):=
\vert {\mathcal P}_j\cap B\vert_j\ \ {\rm for\ all\ Borel\ } B\subset Q,
\]
so that $\mu=N^{-1}\sum_{j=1}^N\mu_j,$ see (\ref{mu}).

\subsection{Curls of zero for a measure supported by a plane}

\label{curl0}

In this section we fix $j\in\{1,\dots N\}.$ In line with Definition \ref{H1kappa_def},
we say that $v\in L^2(Q, d\mu_j)$ is a $\kappa$-curl of zero with respect to the measure $\mu_j$ if there exists a sequence $\{\phi_n\}\subset[C^\infty_\#\bigr]^3$ such that
\begin{equation*}
\int_Q |\phi_n|^2 d\mu_j \stackrel{n\to\infty}{\longrightarrow}0 \quad \quad \int_Q\bigl|\curl(e_\kappa\phi_n)-v\bigr|^2 d\mu_j \stackrel{n\to\infty}{\longrightarrow}0.
\end{equation*}

Without loss of generality, we can assume in what follows  that the plane ${\mathcal P}_j$ passes through zero
and is orthogonal to the $x_3$ direction.
\begin{prop}
\label{zero_prop}
For each $\kappa\in Q',$ the set of $\kappa$-curls of zero with respect to the measure $\mu_j$ coincides with 
\[
L^2_{\rm s}(Q, d\mu_j)\oplus L^2_{\rm s}(Q, d\mu_j)\oplus\{0\},
\] 
where (see Section \ref{Helmholtz_section}) $L^2_{\rm s}(Q, d\mu_j)$ is the space of ${\mathbb C}$-valued functions on $Q$ that are square integrable with respect to the measure $\mu_j.$
\end{prop}
\begin{proof}
For given functions $\xi_1, \xi_2\in L^2_{\rm s}(Q, d\mu_j),$ consider sequences of smooth $Q$-periodic functions, independent of $x_3,$
\[
\bigl\{\xi_j^{(n)}=\xi_j^{(n)}(x_1, x_2),\ \ n\in{\mathbb N}\bigr\},\qquad
j=1,2,
\]
such that
\[
\xi_j^{(n)}\stackrel{n\to\infty}{\longrightarrow}\xi_j\quad{\rm in}\ \  L^2_{\rm s}(Q, d\mu_j),\qquad j=1,2.
\]
Suppose also that functions
$\alpha=\alpha(x_3),$  $\beta=\beta(x_3)$ of the single variable $x_3$ are infinitely smooth and 1-periodic, and that
 their Taylor expansions at zero have the form $x_3+O(x_3^2).$
Define  
\begin{equation}
\phi_n(x)=\left(\begin{array}{c}\beta(x_3)\xi_2^{(n)}(x_1, x_2)\\[0.5em]-\alpha(x_3)\xi_1^{(n)}(x_1, x_2)\\[0.5em]0
\end{array}\right),\quad x=(x_1,x_2, x_3)\in Q,\qquad n\in{\mathbb N}.
\label{phi_zero}
\end{equation}
Then $\{\phi_n\}\subset[C^\infty_\#\bigr]^3$ and by a direct calculation one has, for all $n\in{\mathbb N},$
\[
\overline{e}_\kappa\curl(e_\kappa\phi_n)(x_1, x_2, x_3)=\left(\begin{array}{c}
\bigl(\alpha'(x_3)-{\rm i}\kappa_3\alpha(x_3)\bigr)\xi_1^{(n)}(x_1, x_2)\\[0.5em]\bigl(\beta'(x_3)+{\rm i}\kappa_3\beta(x_3)\bigr)\xi_2^{(n)}(x_1, x_2)
\\[0.5em]-\alpha(x_3)(\partial_1+{\rm i}\kappa_1)\xi^{(n)}_1(x_1, x_2)-\beta(x_3)(\partial_2+{\rm i}\kappa_2)\xi^{(n)}_2(x_1, x_2)\end{array}\right),
\]
where $\partial_j$ is the operator of differentiation with respect to the variable $x_j,$  $j=1,2.$
Due of the assumptions on $\alpha,$ $\beta,$ one has
\begin{equation*}
\int_Q |\phi_n|^2 d\mu_j=0\qquad \forall n,
\end{equation*}
and 
\begin{equation*}
\overline{e}_\kappa\curl(e_\kappa\phi_n)\stackrel{n\to\infty}{\longrightarrow}(\xi_1, \xi_2, 0)^\top
\quad {\rm in}\ \ L^2(Q, d\mu_j).
\end{equation*}
It follows that  $L^2_{\rm s}(Q, d\mu_j)\oplus L^2_{\rm s}(Q, d\mu_j)\oplus\{0\}$ is contained in the set of curls of zero. 

On the other hand, any vector of the form 
\[
(0, 0, \xi_3)^\top,
\qquad \xi_3\in L^2_{\rm s}(Q, d\mu_j),
\]
is orthogonal to all $\kappa$-cirls of zero. Indeed, for any sequence $\xi_3^{(n)}=\xi_3^{(n)}(x_1, x_2)$ of infinitely smooth $x_3$-independent functions converging to $\xi_3$ in $L^2_{\rm s}(Q, d\mu_j)$ and any sequence of vector functions
\[
\phi^{(n)}=\bigl(\phi_1^{(n)}, \phi_2^{(n)}, \phi_3^{(n)}\bigr)^\top
\in\bigl[C^\infty_\#\bigr]^3,\qquad n\in{\mathbb N},
\] 
such that
\[
\int_Q\bigl|\phi^{(n)}\bigr|^2 d\mu_j\stackrel{n\to\infty}{\longrightarrow}0, 
\]
one has (due to the fact that the integration by parts is carried out with respect to the variables $x_1,$ $x_2$ in the plane ${\mathcal P}_j$)
\begin{equation}
\int_Q\bigl((\partial_2+{\rm i}\kappa_2)\phi^{(n)}_1-(\partial_1+{\rm i}\kappa_1)\phi^{(n)}_2\bigr)\overline{\xi^{(n)}_3}d\mu_j=\int_Q\bigl(\phi_2\overline{(\partial_1+{\rm i}\kappa_1)\xi^{(n)}_3}-\phi_{1}\overline{(\partial_2+{\rm i}\kappa_2)\xi^{(n)}_3}\bigr)d\mu_j=0.
\label{xi_orth}
\end{equation}
It follows from (\ref{xi_orth}) that if $\curl(e_\kappa\phi^{(n)})\to v$ as $n\to\infty,$ then $\xi_3$ is orthogonal to $v_3$ in $L^2_{\rm s}(Q, d\mu_j),$ and therefore 
$(0, 0,\xi_3)^\top$ is orthogonal to $v$ in $L^2(Q, d\mu_j).$ Therefore, the set of $\kappa$-curls of zero is contained in $L^2_{\rm s}(Q, d\mu_j)\oplus L^2_{\rm s}(Q, d\mu_j)\oplus\{0\}.$ This concludes the proof of the claim that these two sets coincide.
\end{proof}

\subsection{Approximation in $H^1_{\curl,\kappa}(Q, d\mu)$ by smooth functions}

\label{smooth_approx}

He we prove the following auxiliary statement, which will allow us to establish (\ref{poincare_v})
by first showing that it holds for infinitely smooth functions.

\begin{lem}
\label{aux_lem}
Suppose that $(u, v)\in H^1_{\curl,\kappa}(Q, d\mu),$ where the function $u$ is solenoidal (see Section \ref{Sobolev_section})
\[
\overline{e}_\kappa{\rm div}(e_\kappa u)=0,
\]
and $\curl(e_\kappa u)$ is pointwise orthogonal to the support of measure $\mu.$
Then there exists a sequence $\{\phi_n\}\subset[C^\infty_\#]^3$ 
such that
\begin{equation}
\bigl(e_\kappa \phi_n, \curl(e_\kappa\phi_n)\bigr)\stackrel{n\to\infty}{\longrightarrow}(u, v)\ \ {\rm in}\ \ L^2(Q, d\mu) \oplus L^2(Q, d\mu)
\label{phin_approx}
\end{equation}
and the following two properties hold: 
\begin{equation}
 \overline{e}_\kappa{\rm div}(e_\kappa\phi_n)=0
\label{div_weak}
\end{equation}
in the sense of (\ref{weak_div_cond}) with $F=\phi_n,$ and the vector $\curl(e_\kappa\phi_n)$ is pointwise orthogonal to ${\rm supp}(\mu)$ (excluding the lines of intersection of the planes ${\mathcal P}_j,$ $j=1,\dots, N.$)

\end{lem}

\begin{proof}
According to Definition \ref{H1kappa_def}, there exists a sequence $\{\widetilde{\phi}_n, n\in{\mathbb N}\}\subset[C^\infty_\#]^3$ approximating $(u, v)$ in the sense that (\ref{phin_approx}) holds with $\phi_n$ replaced by $\widetilde{\phi}_n,$ however 
one does not necessarily have $\overline{e}_\kappa{\rm div}(e_\kappa\widetilde{\phi}_n)=0.$ 
In order to ``correct" the sequence $\{\widetilde{\phi}_n\},$ for each $n$ consider the solution $w_n\in H^1_\#$  (see {\it e.g.} Section \ref{Helmholtz_section}) to the elliptic problem
\begin{equation}
-\Delta w_n={\rm div}\bigl(e_\kappa\widetilde{\phi}_n\bigr)
\label{w_eq}
\end{equation}
understood in the weak sense with respect to the measure $\mu:$
\begin{equation}
\int_Q\nabla w_n\cdot \nabla\varphi d\mu=-\int_Qe_\kappa\widetilde{\phi}_n\cdot \nabla\varphi d\mu\qquad \forall \varphi\in C^\infty_\#.
\label{w_varform}
\end{equation}
The problem (\ref{w_varform}) has a unique solution $(w_n, \nabla w_n)\in H^1_\#,$ which has the property that $\nabla w_n$ is orthogonal to all {\it gradients of zero} \cite{Zhikov2000} with respect to the measure $\mu:$ indeed, setting $\varphi=\varphi_j,$ $j\in{\mathbb N},$ in (\ref{w_varform}), where 
\[
\int_Q|\varphi_j|^2d\mu\stackrel{j\to\infty}{\longrightarrow}0,\qquad \int_Q|\nabla\varphi_j-v|^2d\mu\stackrel{j\to\infty}{\longrightarrow}0\
\qquad v\in L^2(Q, d\mu),
\]
and passing in the obtained identity to the limit as $j\to\infty$ yields
\[
\int_Q\nabla w_n\cdot v\,d\mu=0,
\]
as claimed. Following an argument similar to that given in \cite[Section 3.1]{Zhikov2000}, see also \cite[Section 4]{Zhikov2002}, it is shown that the set of gradients of zero is a closed subspace of $L^2(Q, d\mu)$ consisting of vector functions that, when restricted to the plane ${\mathcal P}_j,$ $j=1,\dots, N,$ are pointwise orthogonal to it. Furthermore, it is straightforward to show that for each $n$ the function $w_n$ is infinitely smooth on $Q\cap{\mathcal P}_j,$ {\it e.g.} by deducing the decay properties of the coefficients of its Fourier series with respect to $x_1, x_2$ in terms of the decay properties, as $n\to\infty,$ of the Fourier coefficients of $\widetilde{\phi}_n.$
 
For each $n\in{\mathbb N},$ we consider an infinitely smooth function $\widetilde{w}_n$ on $Q$ that for each $j\in\{1,\dots, N\}$ coincides with $w_n$ on $Q\cap{\mathcal P}_j$  and has zero gradient in the variables orthogonal to ${\mathcal P}_j.$ (Such a smooth extension from $(\cup_{j=1}^N{\mathcal P}_j)\cap Q$ to $Q$ can be obtained in a standard way  by an appropriate partition of unity on $Q,$ carrying out standard extensions in corner, edge, and face regions, and using appropriate mollifiers.) 
Clearly, on ${\rm supp}(\mu)$ one has 
\begin{equation}
{\rm curl}\bigl(e_\kappa(\overline{e}_\kappa\nabla\widetilde{w}_n)\bigr)={\rm curl}\,(\nabla\widetilde {w}_n)={\rm curl}\,(\nabla{w}_n)=0.
\label{curl_zero}
\end{equation}
Furthermore, writing (\ref{w_varform}) in the form (where we take advantage of $u$ being solenoidal)
\[
\int_Q\nabla w_n\cdot\nabla\varphi d\mu=\int_Qe_\kappa(u-\widetilde{\phi}_n)\cdot\nabla\varphi d\mu\qquad \forall \varphi\in C^\infty_\#,
\]
setting $\varphi=w_n,$ and using the fact that the right-hand side of the result goes to zero as $n\to0,$ we obtain 
\[
\int_Q\bigl|\nabla\widetilde{w}_n\bigr|^2d\mu=\int_Q|\nabla w_n|^2d\mu\stackrel{n\to\infty}{\longrightarrow}0.
\]
Combining this observation with (\ref{curl_zero}) and (\ref{w_eq}), we conclude that the functions 
\[
\widehat{\phi}_n:=\widetilde{\phi}_n+\overline{e}_\kappa\nabla\widetilde{w}_n,\qquad n\in{\mathbb N},
\] 
are smooth and have the convergence properties 
\[
\int_Q\bigl|\widehat{\phi}_n-u\bigr|^2d\mu\stackrel{n\to\infty}{\longrightarrow}0,\qquad \int_Q\bigl|{\rm curl}\bigl(e_\kappa \widehat{\phi}_n\bigr)-v\bigr|^2d\mu\stackrel{n\to\infty}{\longrightarrow}0,
\]
and $e_\kappa\widehat{\phi}_n$ is solenoidal for each $n\in{\mathbb N},$ as required in (\ref{phin_approx}), (\ref{div_weak}).

In order to fulfil the second property claimed in the lemma, we construct a further ``correction" to the sequence $\{\widetilde{\phi}_n\},$ which does not affect the properties (\ref{phin_approx}), (\ref{div_weak}). For each $j\in\{1,\dots N\},$ 
consider the rotation $R_j$ in ${\mathbb R}^3$ such that the plane $R_j{\mathcal P}_j$ passes through zero
and is orthogonal to the $x_3$ direction. To simplify the notation, we fix $j$ and assume, as in Section \ref{curl0}, that $R_j=I.$

Under the above convention, notice that the projection of $\curl(e_\kappa\widehat{\phi}_n)$ onto the plane ${\mathcal P}_j,$ restricted to the set 
${\mathcal P}_j\cap Q$ ({\it i.e.} the support of $\mu_j$)  is a smooth function 
\[
\psi_n=
e_\kappa\bigl((\psi_n)_1(x_1, x_2), (\psi_n)_2(x_1, x_2), 0\bigr)^\top,
\quad (x_1, x_2)\in[0, 1)^2.
\]
where, using the notation $\widehat{\phi}_n=((\widehat{\phi}_n)_1, (\widehat{\phi}_n)_2, (\widehat{\phi}_n)_3),$
\begin{equation*}
\begin{aligned}
(\psi_n)_1=
\bigl(({\rm i}\kappa_2+\partial_2)(\widehat{\phi}_n)_3-({\rm i}\kappa_3+\partial_3)
(\widehat{\phi}_n)_2\bigr)\bigr\vert_{x_3=0},\\[0.4em]
 (\psi_n)_2=
 \bigl(({\rm i}\kappa_3+\partial_3)(\widehat{\phi}_n)_1-({\rm i}\kappa_1+\partial_1)
 (\widehat{\phi}_n)_3\bigr)\bigr\vert_{x_3=0}.
\end{aligned}
\end{equation*}
Consider the vector ({\it cf.} (\ref{phi_zero}))
\begin{equation*}
\widehat{\psi}_n(x)=\left(\begin{array}{c}\beta(x_3)(\psi_n)_1(x_1, x_2)\\[0.5em]-\alpha(x_3)(\psi_n)_2(x_1, x_2)\\[0.5em]0
\end{array}\right),\quad x=(x_1, x_2, x_3)\in Q,\qquad n\in{\mathbb N},
\end{equation*}
where the functions
$\alpha=\alpha(x_3),$  $\beta=\beta(x_3)$ of the single variable $x_3$ are infinitely smooth and 1-periodic, and that
 their Taylor expansions at zero have the form $x_3+O(x_3^2).$
Similarly to the argument in the proof of Proposition \ref{curl_zero}, we notice that 
$\curl(e_\kappa\widehat{\psi}_n)=\psi_n,
$
now viewed as a function on the whole of $Q.$ Furthermore, the vector $\widehat{\psi}_n$ is trivially solenoidal, as the vector $\widehat{\psi}_n$ vanishes on ${\mathcal P}_j\cap Q,$ and $\curl(e_\kappa\widehat{\psi}_n)\to 0$ in $L^2(Q, d\mu)$ as $n\to\infty,$ due to the assumption of pointwise orthogonality of $\curl(e_\kappa{\mathfrak u})$ to ${\mathcal P}_j\cap Q.$

The above construction is repeated for each $j\in\{1,\dots, N\},$ now taking into account that it will be preceded by the rotation $R_j.$ Relabel by $\widehat{\psi}_n^{(j)}$ the elements of the constructed sequence $\widehat{\psi}$. As a result, the sequence 
\[
\phi_n=\widehat{\phi}_n-\sum_{j=1}^NR_j^\top\widehat{\psi}_n^{(j)},\qquad n\in{\mathbb N}
\] 
satisfies all the required conditions.
\end{proof}

\subsection{Poincar\'{e} inequality}

In this section we carry out the proof of the Poincar\'{e}-type inequality ({\it cf.}  (\ref{poincare_v}))
\begin{equation}
\biggl\|{\mathfrak u}-\int_Q {\mathfrak u}\biggr\|_{L^2(Q,d\mu_j)}\leq C_{\rm P}\bigl\|\curl(e_\kappa {\mathfrak u})\bigr\|_{L^2(Q,d\mu_j)}
\label{uj}
\end{equation}
for functions $\mathfrak u$ satisfying $\overline{e}_\kappa{\rm div}\,(e_\kappa{\mathfrak u})=0,$ in the case of the measure $\mu$ defined by (\ref{mu}). 

Notice that in the inequality (\ref{uj}) 
we can assume, without loss of generality, that the vector $\curl(e_\kappa{\mathfrak u})$ is orthogonal to ${\mathcal P}_j$ at almost every point of ${\mathcal P}_j\cap Q.$ Indeed, one can write 
\[
\curl(e_\kappa {\mathfrak u})=w_1+w_2,
\] 
where $w_2$ is the projection of $\curl(e_\kappa{\mathfrak u})$ onto the subspace of $L^2(Q,d\mu)$ consisting of $\kappa$-curls of zero,  $w_1$ is another value of the $\kappa$-curl of ${\mathfrak u},$ so that $w_1$ and $w_2$ are orthogonal in the sense of $L^2(Q,d\mu).$ As we showed in Section \ref{curl0}, in the case of the measure $\mu_j$ $\kappa$-curls of zero are parallel to ${\mathcal P}_j$ at each point, so $w_1$ is pointwise parallel to ${\mathcal P}_j$ and $w_2$ is pointwise orthogonal to ${\mathcal P}_j.$ In what follows we can therefore assume
that $\curl(e_\kappa {\mathfrak u})$ is orthogonal to ${\mathcal P}_j.$ This will allow us, in particular, to use Lemma \ref{aux_lem}.

We first prove an auxiliary proposition reflecting the vectorial nature of the inequality (\ref{uj}), due to the presence of the operator curl and then combine it with the ``scalar" Poincar\'{e} inequality applied to each component of the vector ${\mathfrak u}.$  

Having proved (\ref{poincare_v}) with the measure $\mu$ replaced by $\mu_j,$ we will then show, in Section \ref{connect} that it holds for $\mu$ as well (possibly with a larger constant $C_{\rm P}$), using the assumption that the the set 
\[
(\cup_{j=1}^N{\mathcal P}_j)\cap Q=\cup_{j=1}^N({\mathcal P}_j\cap Q)
\] 
is connected.

\subsubsection{The norm of the transversal curl is the norm of the tangential gradient}

\label{curl_bound}

Here we fix $j\in\{1,\dots, N\}$ and, as in Section \ref{curl0}, assume that the plane 
${\mathcal P}_j$ passes through zero
and is orthogonal to the $x_3$ direction. For $\kappa\in Q'$ and a function 
$\phi\in[C_\#^\infty]^3,$ we denote by  $\widetilde{\nabla}(e_\kappa\phi)$ the pointwise orthogonal projection of the $\nabla(e_\kappa\phi)$ onto the $(x_1, x_2)$-plane. 



\begin{prop}
\label{curl_grad_prop}
Suppose that a vector function $\phi\in[C^\infty_\#]^3$ is solenoidal, i.e. ({\it cf.} (\ref{div_weak}))
\begin{equation}
\overline{e}_\kappa{\rm div}(e_\kappa\phi)=0
\label{sol_phi}
\end{equation}
and that 
 the vector $\curl(e_\kappa\phi)$ is pointwise parallel to $x_3$ at each point of ${\mathcal P}_j\cap Q.$ 
Then, for all $\kappa\in Q',$ one has 
\begin{equation*}
\bigl\|\widetilde{\nabla}
(e_\kappa \phi)\bigr\|_{L^2(Q,d\mu_j)}^2
=\bigl\|\curl(e_\kappa \phi)\bigr\|^2_{L^2(Q,d\mu_j)},
\end{equation*}
\end{prop}
\begin{proof}

We 
we expand the function $\phi$
into the standard Fourier series:
\begin{equation}
\phi(x)=\sum_{l\in{\mathbb Z}^3}\exp(2\pi{\rm i} l\cdot x){c}_l, \quad x\in Q,\qquad c_l\in{\mathbb C}^3,\ l\in{\mathbb Z}^3
\label{phi_FS}
\end{equation}
and notice that 
\[
\curl(e_\kappa\phi_n)(x)={\rm i}e_\kappa\sum_{l\in{\mathbb Z}^3}\exp({\rm i}2\pi l\cdot x)\bigl(c_l\times (\kappa+2\pi l)\bigr), \quad x\in Q,
\] 
where the series converges in the norm of $L^2(Q),$ with respect to the Lebesgue measure on $Q.$ Since 
$\curl(e_\kappa{\mathfrak\phi})$ is pointwise orthogonal to ${\mathcal P}_j,$ it follows that 
for each $l\in{\mathbb Z}^3$
the vector $c_l\times(\kappa+2\pi l)$ is orthogonal to ${\mathcal P}_j,$ {\it i.e.} it is parallel to the $x_3$ direction. 




 
For each $x\in Q,$ we denote $(x_1, x_2)=:\widetilde{x}.$ 
and, similarly, for each  value $\kappa\in Q'$ of the quasimomentum, we denote $\widetilde{\kappa}:=(\kappa_1, \kappa_2).$ 
Finally, for each ``multi-index" $l\in{\mathbb Z}^3,$ we consider
 the ``sub-index" $\widetilde{l}:=(l_1, l_2)\in{\mathbb Z}^2.$
We write finite truncations of (\ref{phi_FS}) in the form ($K\in{\mathbb N}$)
\begin{equation}
\phi_K(x)=\sum_{|l|\le K}
 {c}_l  \exp(2\pi{\rm i} l\cdot x)=\sum_{|\widetilde{l}|\le K}
\ \ \sum_{ |{l_3}|\le K-|\widetilde{l}|}
 {c}  _{\widetilde{l}, {l_3}}\exp\bigl(2\pi{\rm i}(\widetilde{l}, {l_3})\cdot (\widetilde{x},{x_3})\bigr), \quad x\in Q.
\label{phi_sub1}
\end{equation}
In the remainder of this section, for brevity, we omit the 
summation ranges for $\widetilde{l},$ $l_3,$ which are the same as in (\ref{phi_sub1}) throughout. From (\ref{phi_sub1}) one has   
\begin{equation}
\begin{aligned}
\int_Q\bigl\vert{\rm i}\varphi\kappa+\nabla\phi_K\bigr\vert^2d\mu_j=\sum_{\widetilde{l}}&\biggl(\sum_{{l_3}}
\Bigl\{ {c}_{({\widetilde{l}, {l_3}})}\otimes\bigl(\widetilde{\kappa}+2\pi\widetilde{l}, {\kappa_3}+2\pi{l_3}\bigr)\Bigr\}\biggr)^\top\\[0.4em]
&\times\biggl(\sum_{{m_3}}
\Bigl\{\overline{ {c}}_{({\widetilde{l}, {m_3}})}\otimes\bigl(\widetilde{\kappa}+2\pi\widetilde{l}, {\kappa_3}+2\pi{m_3}\bigr)\Bigr\}\biggr).
\end{aligned}
\label{product}
\end{equation}
Rearranging the product under the external summation in (\ref{product}) yields\footnote{Recall that by $a\cdot b$ we denote the sesquilinear inner product of 
$a, b\in{\mathbb C}^3.$}
\begin{equation*}
\begin{aligned}
&\int_Q\bigl\vert{\rm i}\varphi\kappa+\nabla\phi_K\bigr\vert^2d\mu_j\\[0.5em]
&=\sum_{\widetilde{l}}\sum_{{l_3}, {m_3}}\bigl\{ {c}_{(\widetilde{l}, {l_3})}\cdot
{ {c}_{(\widetilde{l}, {m_3})}}\bigr\}\bigl\{(\widetilde{\kappa}+2\pi\widetilde{l}, {\kappa_3}+2\pi{l_3})\cdot(\widetilde{\kappa}+2\pi \widetilde{l}, {\kappa_3}+2\pi{m_3})\bigr\}.
\end{aligned}
\end{equation*}
 
 In order to manipulate the above expression into a convenient form, we notice two properties of the Fourier series for $\phi,$ due to the assumptions that it is solenoidal (see (\ref{sol_phi}))
 and that its $\kappa$-curl is orthogonal to ${\mathcal P}_j$ ({\it i.e.} parallel to the $x_3$-direction).
 In terms of the Fourier coefficients $c_l,$ the first condition can be written as follows:
\begin{equation}
\begin{aligned}
0&=\int_{Q}\sum_{\widetilde{p}, l_3}
\exp\bigl({\rm i}(\widetilde{p}\cdot\widetilde{x}+l_3x_3)\bigr)
c_{(\widetilde{p}, l_3)} \cdot 
\exp\bigl(-{\rm i}(\widetilde{l}\cdot\widetilde{x}+m_3x_3)\bigr)
(\widetilde{\kappa}+2\pi\widetilde{l}, {\kappa_3}+2\pi{m_3})d\mu_j\\[0.4em]
&=\int_{(0,1)^2}\sum_{\widetilde{p}, l_3}
\exp({\rm i}\widetilde{p}\cdot\widetilde{x})
c_{(\widetilde{p}, l_3)} \cdot 
\exp(-{\rm i}\widetilde{l}\cdot\widetilde{x})
(\widetilde{\kappa}+2\pi\widetilde{l}, {\kappa_3}+2\pi{m_3})d\widetilde{x}\\[0.4em]
&=\sum_{{l_3}}
c_{(\widetilde{l}, {l_3})} \cdot(\widetilde{\kappa}+2\pi\widetilde{l}, {\kappa_3}+2\pi{m_3})
\qquad \forall\,\widetilde{l}\in{\mathbb Z}^2,\,m_3\in{\mathbb Z},
\end{aligned}
\label{div_cond}
\end{equation}
which is obtained by setting 
\[
\varphi(x)=
\exp\bigl(-{\rm i}\widetilde{l}\cdot\widetilde{x}+m_3x_3\bigr),\quad x\in Q,\qquad \widetilde{l}\in{\mathbb Z}^2,\,m_3\in{\mathbb Z},
\]
as the test function for (\ref{sol_phi}).

Similarly, the second condition takes the form
\begin{equation}
\begin{aligned}
0&=\int_Q\sum_{\widetilde{p}, l_3}
\exp({\rm i}\widetilde{p}\cdot\widetilde{x})
\bigl(c_{(\widetilde{p}, l_3)} \times(\widetilde{\kappa}+2\pi\widetilde{p}, {\kappa_3}+2\pi{l_3})\bigr)\cdot \exp(-{\rm i}\widetilde{l}\cdot x)a,\\[0.4em]
&=\sum_{{l_3}}
\Bigl(c_{(\widetilde{l}, {l_3})} \times\bigl(\widetilde{\kappa}+2\pi\widetilde{l}, {\kappa_3}+2\pi{l_3}\bigr)\Bigr)\cdot a\qquad \forall\,\widetilde{l}\in{\mathbb Z}^2,\,a\in(0,1)^2.
\end{aligned}
\label{curl_cond}
\end{equation}

Using standard formulae of vector calculus, we write, for each $\widetilde{l}\in{\mathbb Z}^2,$ $m_3\in{\mathbb Z},$
\begin{align*}
&\sum_{{l_3}}
\Bigl( {c}_{(\widetilde{l}, {l_3})}\times(\widetilde{\kappa}+2\pi\widetilde{l}, {\kappa_3}+2\pi{l_3})\Bigr)\cdot\,
\Bigl({ {c}_{(\widetilde{l}, {m_3})}}\times
 (\widetilde{\kappa}+2\pi\widetilde{l}, {\kappa_3}+2\pi{m_3})\Bigr)\\[0.5em]
 &=\sum_{{l_3}}
 {c}_{(\widetilde{l}, {m_3})}\cdot\Bigl\{(\widetilde{\kappa}+2\pi\widetilde{l}, {\kappa_3}+2\pi{m_3})\times
 \Bigl({ {c}_{(\widetilde{l}, {l_3})}}\times
 (\widetilde{\kappa}+2\pi\widetilde{l}, {\kappa_3}+2\pi{l_3})\Bigr)\Bigr\}\\[0.5em]
 &=\sum_{{l_3}}
 {c}_{(\widetilde{l}, {m_3})}\cdot\Bigl\{(\widetilde{\kappa}+2\pi\widetilde{l}, 0)\times
 \Bigl({ {c}_{(\widetilde{l}, {l_3})}}\times
 (\widetilde{\kappa}+2\pi\widetilde{l}, {\kappa_3}+2\pi{l_3})\Bigr)\Bigr\}\\[0.5em]
 &
 \qquad\qquad\qquad+{c}_{(\widetilde{l}, {m_3})}\cdot
 \Bigl\{(0, {\kappa_3}+2\pi{m_3})\times\sum_{{l_3}}
\Bigl({ {c}_{(\widetilde{l}, {l_3})}}\times
 (\widetilde{\kappa}+2\pi\widetilde{l}, {\kappa_3}+2\pi{l_3})\Bigr)\Bigr\}\\[0.5em]
 &=\sum_{{l_3}}
 {c}_{(\widetilde{l}, {m_3})}\cdot\Bigl\{(\widetilde{\kappa}+2\pi\widetilde{l}, 0)\times\Bigl(
 { {c}_{(\widetilde{l}, {l_3})}}\times
 (\widetilde{\kappa}+2\pi\widetilde{l}, {\kappa_3}+2\pi{l_3})\Bigr)\Bigr\}\\[0.5em]
 &=\sum_{{l_3}}
 \Bigl(\bigl\{ {c}_{(\widetilde{l}, {l_3})}\cdot
 { {c}_{(\widetilde{l}, {m_3})}}\bigr\}\bigl\{(\widetilde{\kappa}+2\pi\widetilde{l}, 0)\cdot(\widetilde{\kappa}+2\pi \widetilde{l}, 0)\bigr\}\\[0.5em]
&
\qquad\qquad\qquad-\bigl\{ {c}_{(\widetilde{l}, {l_3})}\cdot (\widetilde{\kappa}+2\pi\widetilde{l}, {\kappa_3}+2\pi{m_3})\bigr\}\bigl\{
\overline{ {c}}_{(\widetilde{l}, {m_3})}\cdot (\widetilde{\kappa}+2\pi\widetilde{l}, 0)\bigr\}\Bigr)\\[0.5em]
 &=\sum_{{l_3}}
 \bigl\{ {c}_{(\widetilde{l}, {l_3})}\cdot
 { {c}_{(\widetilde{l}, {m_3})}}\bigr\}\bigl\{(\widetilde{\kappa}+2\pi\widetilde{l})\cdot(\widetilde{\kappa}+2\pi \widetilde{l})\bigr\}\\[0.5em]
&
\qquad\qquad\qquad-\Bigl\{\sum_{{l_3}}
{c}_{(\widetilde{l}, {l_3})}\cdot (\widetilde{\kappa}+2\pi\widetilde{l}, {\kappa_3}+2\pi{m_3})\Bigr\}\bigl\{\overline{ {c}}_{(\widetilde{l}, {m_3})}\cdot (\widetilde{\kappa}+2\pi\widetilde{l}, 0)\bigr\}\\[0.5em]
&=\sum_{{l_3}}
\bigl\{ {c}_{(\widetilde{l}, {l_3})}\cdot
{ {c}_{(\widetilde{l}, {m_3})}}\bigr\}\bigl\{(\widetilde{\kappa}+2\pi\widetilde{l})\cdot(\widetilde{\kappa}+2\pi \widetilde{l})\bigr\}.
\end{align*}
Here, for the third equality we use the fact that by (\ref{curl_cond}), 
the vector  
\[
\sum_{{l_3}}
\Bigl({ {c}_{(\widetilde{l}, {l_3})}}\times(\widetilde{\kappa}+2\pi\widetilde{l}, {\kappa_3}+2\pi{l_3})\Bigr)
\]
is orthogonal to the $(x_1, x_2)$-plane
and hence parallel to the vector $(0, {\kappa_3}+2\pi{m_3}),$ and for the sixth equality we use (\ref{div_cond}).
It follows that
\begin{align*}
&\int_Q\bigl\vert\nabla(e_\kappa\phi)\bigr\vert^2d\mu_j=\lim_{K\to\infty}\int_Q\bigl\vert{\rm i}\varphi\kappa+\nabla\phi_K\bigr\vert^2d\mu_j\\[0.6em]
&=\lim_{K\to\infty}\sum_{{|\widetilde{l}|\le K}}\ \ \sum_{\substack{|{l_3}|\le K-|\widetilde{l}|\\
|{m_3}|\le K-|\widetilde{l}|}}
\biggl\{\bigl\{ {c}_{(\widetilde{l}, {l_3})}  \times(\widetilde{\kappa}+2\pi\widetilde{l}, {\kappa_3}+2\pi{l_3})\bigr\}\cdot\,
\bigl\{{ {c}_{(\widetilde{l}, {m_3})}  }\times
 (\widetilde{\kappa}+2\pi\widetilde{l}, {\kappa_3}+2\pi{m_3})
 \bigr\}\\[0.6em]
&\qquad\qquad\qquad+\bigl\{c_{({\widetilde{l}, {l_3}})}  \cdot 
{c_{({\widetilde{l}, {m_3}})}  }\bigr\}({\kappa_3}+2\pi {l_3})\cdot({\kappa_3}+2\pi {m_3})\biggr\}
\\[0.5em]
&=\lim_{K\to\infty}\biggl\{\int_Q\bigl\vert({\rm i}\kappa+\nabla)\times\phi_K\bigr\vert^2d\mu_j+\int_Q\bigl\vert({\rm i}{\kappa_3}+{\partial_3})\phi_K\bigr\vert^2d\mu_j\biggr\}\\[0.6em]
&=\lim_{K\to\infty}\biggl\{\int_Q\bigl\vert\curl(e_\kappa\phi_K)\bigr\vert^2d\mu_j+\int_Q\bigl\vert{\partial_3}(e_\kappa\phi_K)\bigr\vert^2d\mu_j\biggr\}
=\int_Q\bigl\vert\curl(e_\kappa\phi)\bigr\vert^2d\mu_j+\int_Q\bigl\vert{\partial_3}(e_\kappa\phi)\bigr\vert^2d\mu_j,
\end{align*}
and therefore
\begin{align*}
\bigl\|\widetilde{\nabla}
(e_\kappa\phi)\bigr\|_{L^2(Q,d\mu_j)}^2=
\bigl\|\nabla(e_\kappa\phi)\bigr\|_{L^2(Q,d\mu_j)}^2-\bigl\|{\partial_3}(e_\kappa\phi)\bigr\|_{L^2(Q,d\mu_j)}^2
=\bigl\|\curl(e_\kappa\phi)\bigr\|^2_{L^2(Q,d\mu_j)},
\end{align*}
as required.
\end{proof}


\subsubsection{``Scalar" Poincar\'{e} inequality for a single plane}

We continue working with a fixed $j\in\{1,\dots, N\}$ and assume, without loss of generality,
that the plane 
${\mathcal P}_j$ passes through zero
and is orthogonal to the $x_3$-direction. For a function $\phi\in C_\#^\infty,$ 
we denote by $\widetilde{\nabla}\phi(x)\in{\mathbb R}^2,$ $x\in Q,$ the (pointwise)  projection of its gradient onto the $(x_1, x_2)$-plane.

We write 
\[
\phi(\widetilde{x})-\int_Q\phi d\mu_j=\sum_{\widetilde{l}\in{\mathbb Z}^2\setminus\{0\}}c_{\widetilde{l}}\exp\bigl(2\pi{\rm i}\widetilde{l}\cdot\widetilde{x}\bigr), \quad \widetilde{x}\in [0, 1)^2,\qquad c_{\widetilde{l}}\in{\mathbb C},\quad\widetilde{l}\in{\mathbb Z}^2\setminus\{0\},
\]
and notice that, for each $\widetilde{\kappa}\in[-\pi, \pi)^2,$ one has, assuming $\phi$ is non-constant on ${\mathcal P}_j\cap Q,$
\begin{align*}
&\biggl(\int_Q\biggl\vert\phi-\int_Q\phi d\mu_j\biggr\vert^2d\mu_j\biggr)^{-1}\int_Q\bigl\vert{\rm i}\varphi\widetilde{\kappa}+\widetilde{\nabla}\phi\bigr\vert^2d\mu_j\\[0.5em]
&=\biggl(\sum_{\widetilde{l}, \widetilde{m}\in{\mathbb Z}^2\setminus\{0\}}\alpha_{\widetilde{l}\widetilde{m}}c_{\widetilde{l}}\overline{c_{\widetilde{m}}}\biggr)^{-1}\biggl(\sum_{\widetilde{l}, \widetilde{m}\in{\mathbb Z}^2\setminus\{0\}}\alpha_{\widetilde{l}\widetilde{m}}c_{\widetilde{l}}\overline{c_{\widetilde{m}}}(\widetilde{\kappa}+2\pi\widetilde{l})\cdot(\widetilde{\kappa}+2\pi\widetilde{m})\biggr),
\end{align*}
where
\[
\alpha_{\widetilde{l}\widetilde{m}}:=\int_{(0,1)^2}\exp\bigl(2\pi{\rm i}(\widetilde{l}-\widetilde{m})\cdot \widetilde{x}\bigr)d\widetilde{x}=\left\{\begin{array}{ll}1, \ \ \widetilde{l}=\widetilde{m},\\[0.4em]
0\ \ \ {\rm otherwise}.\end{array}\right.
\]
It follows that 
\begin{equation*}
\biggl(\int_Q\biggl\vert\phi-\int_Q\phi d\mu_j\biggr\vert^2d\mu_j\biggr)^{-1}\int_Q\bigl\vert{\rm i}\varphi\widetilde{\kappa}+\widetilde{\nabla}\phi\bigr\vert^2d\mu_j
=\biggl(\sum_{\widetilde{l}\in{\mathbb Z}^2\setminus\{0\}}|c_l|^2\biggr)^{-1}\biggl(\sum_{\widetilde{l}, \widetilde{m}\in{\mathbb Z}^2\setminus\{0\}}
|c_l|^2|\widetilde{\kappa}+2\pi\widetilde{l}|^2\biggr)\ge\pi^2,
\end{equation*}
and hence 
\begin{equation}
\int_Q\biggl\vert\phi-\int_Q\phi d\mu_j\biggr\vert^2d\mu_j\le \pi^{-2}\bigl\|\widetilde{\nabla}
(e_\kappa\phi)\bigr\|_{L^2(Q,d\mu_j)}^2,
\label{lem_scal}
\end{equation}
where $\widetilde{\nabla}(e_\kappa\phi)$ is the ``tangential" gradient introduced at the beginning of Section \ref{curl_bound}. 
If the function $\phi$ is constant on ${\mathcal P}_j\cap Q,$ the inequality (\ref{lem_scal}) is satisfied trivially. Note also that an inequality of the same for as (\ref{lem_scal}) has thus been established for vector functions $\phi\in [C_\#]^3,$ by applying it component-wise and adding the inequalities obtained for the individual components. Below we discuss the vector case, for which the Poincar\'{e} inequality for any of the measures $\mu_j,$ $j=1,\dots, N,$ looks the same as (\ref{lem_scal}), where $\phi$ is now a smooth vector function.

\subsubsection{Connectivity argument}
\label{connect}

For the measure $\mu=\sum_{j=1}^N\mu_j$ and $\phi\in[C^\infty_\#]^3,$ we denote by 
$\widetilde{\nabla}(e_\kappa\phi)$ the (component-wise) tangential gradient of $\phi$ at points of 
${\rm supp}(\mu),$ {\it i.e.} the orthogonal projection of $\nabla(e_\kappa\phi)$ onto ${\rm supp}(\mu).$   

Suppose that for $j, k\in\{1,\dots, N\}$ the planes ${\mathcal P}_j$ and ${\mathcal P}_k$  intersect and fix a point $\alpha_{jk}\in{\mathcal P}_j\cap{\mathcal P}_k\cap Q.$ For any $\kappa\in Q',$ any function 
$\phi\in[C^\infty_\#]^3,$ and all $x\in{\mathcal P}_j\cap Q$, $y\in{\mathcal P}_k\cap Q,$ one has
\begin{equation}
\begin{aligned}
e_\kappa(x)\phi(x)-e_\kappa(y)\phi(y)&=\int_{0}^1\nabla(e_\kappa\phi)\bigl(\alpha_{jk}+t(x-\alpha_{jk})\bigr)dt\cdot(x-\alpha_{jk})\\[0.5em]
&-\int_{0}^1\nabla(e_\kappa\phi)\bigl(\alpha_{jk}+t(y-\alpha_{jk})\bigr)dt\cdot(y-\alpha_{jk}).
\end{aligned}
\label{both_sides}
\end{equation} 
Multiplying both sides of (\ref{both_sides}) by $e_\kappa(y)^{-1}=e_\kappa(-y)$ and integrating over $y\in Q$ with respect to the measure $\mu_k$ (recalling that ${\rm supp}(\mu_k)={\mathcal P}_k\cap Q$) yields
\begin{equation}
\begin{aligned}
e_\kappa(x)\phi(x)\int_Qe_\kappa^{-1}d\mu_k-\int_Q\phi d\mu_k\le
\sqrt{2}\biggl(\int_{0}^1&\bigl\vert\widetilde{\nabla}(e_\kappa\phi)\bigl(\alpha_{jk}+t(x-\alpha_{jk})\bigr)\bigr\vert dt\\[0.5em]
&+\bigl\Vert\widetilde{\nabla}(e_\kappa\phi)\bigr\Vert_{L^2(Q,d\mu_k)}\biggr)\qquad\forall x\in{\mathcal P}_j\cap Q.
\end{aligned}
\label{interm_phi}
\end{equation}
Furthermore, multiplying both sides of (\ref{interm_phi}) by $e_\kappa(x)^{-1}$ and integrating over $x\in Q$ with respect to the measure $\mu_j$ yields
\begin{equation*}
\int_Q\phi d\mu_j-\int_Q\phi d\mu_k\le 
\sqrt{2}\Bigl(\bigl\Vert\widetilde{\nabla}(e_\kappa\phi)\bigr\Vert_{L^2(Q,d\mu_j)}
+\bigl\Vert\widetilde{\nabla}(e_\kappa\phi)\bigr\Vert_{L^2(Q,d\mu_k)}\Bigr)\le\sqrt{2}\Vert\widetilde{\nabla}(e_\kappa\phi)\bigr\Vert_{L^2(Q,d\mu)}.
\end{equation*}
By interchanging $k$ and $j$ if necessary, we thus obtain 
\[
\biggl\vert\int_Q\phi d\mu_j-\int_Q\phi d\mu_k\biggr\vert\le \sqrt{2}\bigl\Vert\widetilde{\nabla}(e_\kappa\phi)\bigr\Vert_{L^2(Q,d\mu)}.
\]

Next, notice that since $(\cup_{j=1}^N{\mathcal P}_j)\cap Q$ is connected by assumption, for each pair of planes in the union there is a ``path" from one plane to the other involving at most $N$ planes, such that any ``adjacent" planes in the path intersect. It follows that for all pairs $j, k$ the following bound holds:
\begin{equation}
\biggl\vert\int_Q\phi d\mu_j-\int_Q\phi d\mu_k\biggr\vert\le \sqrt{2}N\bigl\Vert\widetilde{\nabla}(e_\kappa\phi)\bigr\Vert_{L^2(Q,d\mu)}.
\label{anyjk}
\end{equation}

Finally, using (\ref{anyjk}) and standard arithmetic inequalities, we obtain
\begin{align*}
\int_Q\biggl\vert\phi-\int_Q\phi\biggr\vert^2d\mu&=\sum_{j=1}^N\int_Q\biggl\vert\phi-\int_Q\phi\biggr\vert^2d\mu_j=\sum_{j=1}^N\int_Q\biggl\vert\phi-\sum_{k=1}^NN^{-1}
\int_Q\phi d\mu_k\biggr\vert^2d\mu_j\\[0.4em]
&=\sum_{j=1}^N\int_Q\biggl\vert\sum_{k=1}^N
N^{-1}\biggl(\phi-\int_Q\phi d\mu_k\biggr)\biggr\vert^2d\mu_j\le \sum_{j=1}^N\sum_{k=1}^N
N^{-1}\int_Q\biggl\vert\biggl(\phi-\int_Q\phi d\mu_k\biggr)\biggr\vert^2d\mu_j\\[0.4em]
&=\sum_{j=1}^N\sum_{k=1}^N
N^{-1}
\int_Q\biggl\vert\phi-\int_Q\phi d\mu_j+
\biggl(\int_Q\phi d\mu_j-\int_Q\phi d\mu_k\biggr)
\biggr\vert^2d\mu_j\\[0.4em]
&\le 2\sum_{j=1}^N\sum_{k=1}^N
N^{-1}\biggl\{\int_Q\biggl\vert\phi-\int_Q\phi d\mu_j
\biggr\vert^2d\mu_j+2N^2\Vert\widetilde{\nabla}(e_\kappa\phi)\bigr\Vert^2_{L^2(Q,d\mu)}\biggr\}\\[0.4em]
&=2\sum_{j=1}^N\biggl\{\int_Q\biggl\vert\phi-\int_Q\phi d\mu_j
\biggr\vert^2d\mu_j+2N^2\Vert\widetilde{\nabla}(e_\kappa\phi)\bigr\Vert^2_{L^2(Q,d\mu)}\biggr\}\\[0.4em]
&\le2\sum_{j=1}^N\biggl\{\pi^{-2}\Vert\widetilde{\nabla}(e_\kappa\phi)\bigr\Vert^2_{L^2(Q,d\mu_j)}+2N^2\Vert\widetilde{\nabla}(e_\kappa\phi)\bigr\Vert^2_{L^2(Q,d\mu)}\biggr\}\\[0.4em]
&\le 2\bigl(\pi^{-2}+2N^3\bigr)\Vert\widetilde{\nabla}(e_\kappa\phi)\bigr\Vert^2_{L^2(Q,d\mu)}.
\end{align*}
Combining the above bound
and the result of Proposition \ref{curl_grad_prop} 
applied for each $j=1,\dots, N,$ we obtain 
\begin{equation}
\int_Q\biggl\vert\phi-\int_Q\phi\biggr\vert^2d\mu\le C_{\rm P}\bigl\|\curl(e_\kappa \phi)\bigr\|^2_{L^2(Q,d\mu)},
\label{phi_fin}
\end{equation}
with  
\begin{equation}
C_{\rm P}=2\bigl(\pi^{-2}+2N^3\bigr),
\label{CP}
\end{equation}
which we note depends on $N$ only.

According to the result of Section \ref{smooth_approx}, the pair $({\mathfrak u}, \curl(e_\kappa{\mathfrak u}))$ is approximated by functions $\phi_n\in[C^\infty_\#]^3$ satisfying the conditions of Proposition \ref{curl_grad_prop}, {\it i.e.} such that ({\it cf.} (\ref{weak_div_cond}))
\[
\int_Qe_\kappa\phi_n\cdot {\nabla(e_\kappa\psi)}\,d\mu=0\qquad \forall\psi\in C_\#^\infty
\]
and $\curl(e_\kappa\phi_n)$ is pointwise orthogonal to ${\rm supp}(\mu),$
where the approximation is understood in the sense that ({\it cf.} (\ref{phin_approx}))
\[
\bigl(e_\kappa \phi_n, \curl(e_\kappa\phi_n)\bigr)\stackrel{n\to\infty}{\longrightarrow}\bigl({\mathfrak u}, \curl(e_\kappa{\mathfrak u})\bigr)\ \ {\rm in}\ \ L^2(Q, d\mu) \oplus L^2(Q, d\mu).
\]


Writing the bound (\ref{phi_fin}) with $\phi=\phi_n,$ where $\{\phi_n\}\subset[C^\infty_\#]^3$ is the approximating sequence for $u$ as described above, and passing to the limit as $n\to\infty$
yields the inequality (\ref{poincare_v}) with the constant $C_{\rm P}$ given by (\ref{CP}).

\section*{Acknowledgments}
We are grateful to Igor Vel\v{c}i\'{c} for fruitful discussions, which motivated our version of the Poincar\'{e} inequality of Section \ref{Helmholtz_section}. KC is grateful for the support of
the Engineering and Physical Sciences Research Council: Grant EP/L018802/2 ``Mathematical foundations of metamaterials: homogenisation, dissipation and operator theory''.

\end{document}